\newtheorem{lem}{Lemma}[section]
\newtheorem{thm}{Theorem}[section]
\newtheorem{rem}{Remark}[section]
\newtheorem{Def}{Definiton}[section]
\newtheorem{exmp}{Example}
\numberwithin{equation}{section}
\numberwithin{figure}{section}
\newcommand{\caT}{\mathcal{T}}
\newcommand{\R}{\mathbb{R}}
\newcommand{\V}{\mathbb{V}}
\newcommand{\dist}{{\rm dist}}
\newcommand{\Om}{\Omega}
\newcommand{\Ga}{\Gamma}
\newcommand{\na}{\nabla}
\newcommand{\al}{\alpha}
\newcommand{\lj}{[{\hskip -1.5pt} [}
\newcommand{\rj}{]{\hskip -1.5pt} ]}
\newcommand{\be}{\begin{eqnarray}}
\newcommand{\ee}{\end{eqnarray}}
\newcommand{\ben}{\begin{eqnarray*}}
\newcommand{\een}{\end{eqnarray*}}
\newcommand{\beq}{\begin{equation}}
\newcommand{\eeq}{\end{equation}}
\newcommand{\cE}{\mathcal{E}}
\newcommand{\cM}{\mathcal{M}}
\newcommand{\cP}{\mathcal{P}}
\newcommand{\cS}{\mathcal{S}}
\newcommand{\cT}{\mathcal{T}}
\newcommand{\la}{\langle}
\newcommand{\ra}{\rangle}
\newcommand{\pa}{\partial}
\newcommand{\X}{\mathbb{X}}
\newcommand{\bv}{\boldsymbol{v}}
\newcommand{\bV}{\boldsymbol{V}}
\newcommand{\bW}{\boldsymbol{W}}
\newcommand{\rev}{}
\newcommand{\revto}{}
\newcommand{\nn}{\nonumber}
\title{An Arbitrarily High Order Unfitted Finite Element Method for Elliptic Interface Problems with Automatic Mesh Generation\footnotemark[1]}
\author{
Zhiming Chen\footnotemark[2]
\and
Yong Liu\footnotemark[3]
}
\date{}
\begin{document}
\maketitle

\renewcommand{\thefootnote}{\fnsymbol{footnote}}
\footnotetext[1]{This work is supported in part by China National Key Technologies R\&D Program under the grant 2019YFA0709602 and China Natural Science Foundation under the grant 118311061,12288201.}
\footnotetext[2]{LSEC, Institute of Computational Mathematics,
Academy of Mathematics and System Sciences and School of Mathematical Science, University of
Chinese Academy of Sciences, Chinese Academy of Sciences,
Beijing 100190, China. E-mail: zmchen@lsec.cc.ac.cn}
\footnotetext[3]{LSEC, Institute of Computational Mathematics, Academy of Mathematics and Systems Science, Chinese Academy of Sciences, Beijing 100190, P.R. China.  E-mail: yongliu@lsec.cc.ac.cn}

\begin{center}
\small
\begin{minipage}{0.9\textwidth}
\textbf{Abstract.}
We consider the reliable implementation of high-order unfitted finite element methods on Cartesian meshes with hanging nodes for elliptic interface problems. We construct a reliable algorithm to merge small interface elements with their surrounding elements to automatically generate the finite element mesh whose elements are large with respect to both domains. We propose new basis functions for the interface elements to control the growth of the condition number of the stiffness matrix in terms of the finite element approximation order, the number of elements of the mesh, and the interface deviation which quantifies the mesh resolution of the geometry of the interface. Numerical examples are presented to illustrate the competitive performance of the method.

\medskip
\textbf{Key words.}
Cell merging algorithm; unfitted finite element method; condition number
\medskip

\textbf{AMS classification}.
65N50, 65N30
\end{minipage}
\end{center}
\setlength{\parindent}{2em}
\section{Introduction}\label{sec_intro}

Interface problems arise from diverse physical and engineering applications in which the coefficients of the governing partial differential equations are discontinuous across material interfaces that separate the physical domains. The body-fitted finite element methods resolve the geometry of the interface by requiring the vertices of the finite element mesh located on the interfaces \cite{Babuska70, Chen98, ChenLong}. For domains with complex geometry, the construction of body-fitted shape regular finite element meshes may be difficult and time-consuming, which is the main driving force of the study of unfitted finite element methods. In this paper we will show that the shape regular body-fitted mesh can indeed be constructed for any shaped smooth interface based on our new merging cell algorithm  (see remarks below Theorem \ref{thm:3.1}). We emphasize, however, that even when the body-fitted shape regular mesh is available, the construction of high-order finite element methods still requires substantial new ideas including, for example, the isoparametric finite element method \cite{Ciarlet, Lehrenfeld} 
or unfitted finite element methods which are the focus of this paper. We remark that the shape regularity assumption of the finite element mesh is not only fundamental in the mathematical theory of finite element methods (see, e.g., \cite{Ciarlet}) but also essential in controlling the condition number of the finite element stiffness matrix for elliptic equations (see, e.g. \cite{Bank}).

Let $\Om\subset\R^2$ be a bounded Lipschitz domain which is divided by a $C^2$-smooth interface $\Ga$ into two nonintersecting subdomains $\Omega_1\subset\bar{\Omega}_1\subset \Omega$, $\Omega_2=\Omega\setminus\bar{\Omega}_1$, see
Fig.\ref{fig:1.1}. We consider the following elliptic interface problem
\begin{align}
&-{\rm div}(a\nabla u)=f\ \ \mbox{in }\Omega_1\cup\Omega_2,\label{m1}\\
&[\![u]\!]_{\Gamma}=0, \, [\![a\nabla u \cdot n]\!]_{\Gamma}=0\ \ \mbox{on } \Gamma,\ \ u=g\ \ \mbox{on }\pa\Om,\label{m2}
\end{align}
where $f\in L^2(\Omega)$, $g\in H^{1/2}(\partial \Omega)$, $n$ is the unit outer normal to $\Omega_1$, and $[\![v]\!]:=v|_{\Omega_1}-v|_{\Omega_2}$ stands for the jump of a function $v$ across the interface $\Gamma$. We assume that the coefficient $a(x)$ is positive and piecewise constant, namely,
$a=a_1\chi_{\Omega_1}+a_2\chi_{\Omega_2}$, $a_1,a_2 >0$, where $\chi_{\Omega_i}$ denotes the characteristic function of $\Omega_i$, $i=1,2$.

\begin{figure}[htbp]
  \centering
  \includegraphics[width=0.3\linewidth]{./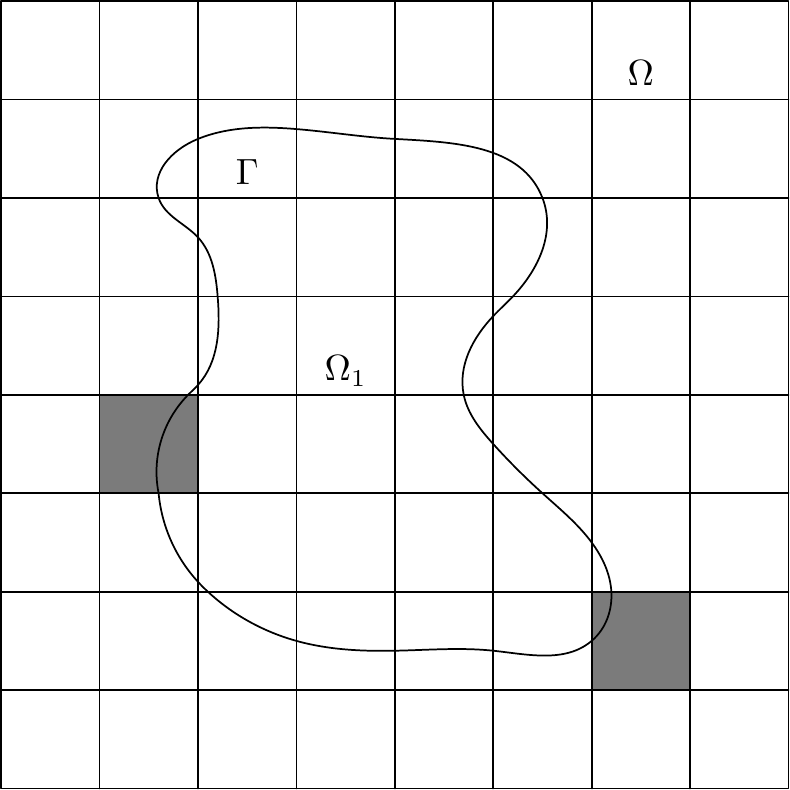}
   \caption{The setting of the elliptic interface problem and the unfitted mesh.}\label{fig:1.1}
\end{figure}

Unfitted finite element methods in the discontinuous Galerkin (DG) framework have attracted considerable interests in the literature in the last twenty years starting from the seminal work \cite{Hansbo} in which an unfitted finite element method is proposed for elliptic interface problems. The method is defined on a fixed background mesh and uses different finite element functions in different cut cells which is the intersection of the mesh elements and physical domains. The jump condition on the interface is enforced by penalties which extends an earlier idea of Nitsche \cite{Nitsche}. This unfitted finite element method can also be viewed as the interior penalty discontinuous Galerkin method (see, e.g., \cite{Arnold}) defined on meshes allowing curve-shaped elements. The main difficulty in using the unfitted finite element methods is the so-called {\it small cut cell problem}: the cut cells can be arbitrarily small and anisotropic, which can make the stiffness matrix extremely ill-conditioned, especially for high-order finite element methods \cite{Prenter, Badia22}. For other approaches to design unfitted discretization methods by constructing special finite element bases on interface elements or finite difference stencils along the interface, we refer to the immersed boundary method \cite{Peskin}, the immersed interface method \cite{LeVeque, Li06}, or the immersed finite element method \cite{Li03, Chen09}.

There are two approaches in the literature to attack the small cut cell problem. One is by appropriate techniques of stabilization \cite{Burman10, Burman12, Wu, Massjung, Wang, Gurken}. Among them, for example, the method of ghost penalty \cite{Burman10, Burman12, Gurken} adds additional penalties on the jumps of derivatives across sides or facets of interface elements. The other approach is by merging the small cut cells with neighboring large elements \cite{Johansson, Huang, Badia18, CLX, BurmanHHO21} so that the merged macro-elements have enough support. While the DG formulation is still used in \cite{Johansson, Huang, CLX},  the aggregated unfitted finite element method in \cite{Badia18} relies on the construction of stable extension operators so that the finite element space is still $C^0$. We refer to recent works \cite{Burman15, Burman21, Badia21, Badia22} for further information about ghost penalty and the aggregated unfitted finite element method.

In \cite{CLX} an adaptive high-order unfitted finite element method is proposed for elliptic interface problems in which the $hp$ a priori and a posteriori error estimates are derived based on novel $hp$ domain inverse estimates and the concept of interface deviation. The interface deviation is a measure to quantify the mesh resolution of the geometry of the interface. We remark that the study on $hp$ inverse estimates on curved domains is not only of mathematical interests, it is also essential to understand and control the exponential growth on the finite element approximation order $p$ of the condition number of the stiffness matrix of the unfitted finite element method in this paper.

The macro-elements, which are the union of small interface elements and their surrounding elements, are assumed to be rectangular in \cite{CLX}. This assumption is different from those in \cite{Johansson, Huang, Badia18}, see Fig.\ref{fig:p123}. The macro-elements in \cite{Johansson, Huang, Badia18} need not to be of rectangular shape, which makes the implementation simpler but the crucial inverse estimates on extended elements in \cite{Johansson, Huang} or the stability of the extension operators \cite{Badia18} are shown without considering the dependence on the finite element approximation order $p$. The assumption that the macro-elements should be rectangular in \cite{CLX} raises the question of how to construct the merging algorithm in practical applications.

\bigskip
\begin{figure}[h]
  \centering
  \includegraphics[width=0.75\linewidth]{./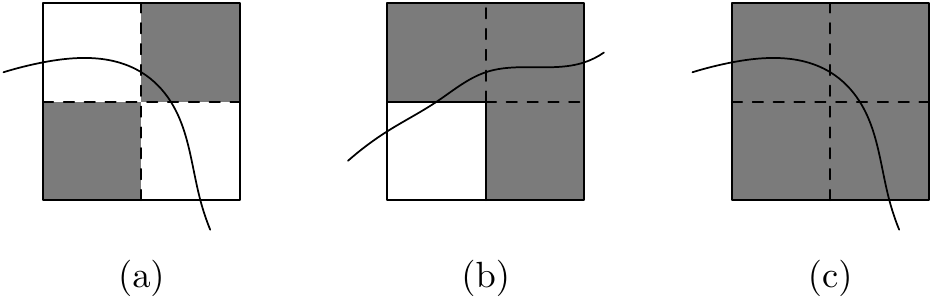}
   \caption{Three different ways of generating macro-elements which are marked in dark. The left, middle, and right figures illustrate the macro-elements used in \cite{Johansson,BurmanHHO21}, \cite{Huang, Badia18}, and \cite{CLX}, respectively.}\label{fig:p123}
\end{figure}

The first objective of this paper is to propose a reliable algorithm to merge small interface elements with their surrounding elements to generate the macro-elements. The algorithm is based on the concept of admissible chain of interface elements, the classification of patterns for merging elements, and appropriate ordering in generating macro-elements from the patterns so that the reliability of the algorithm in the sense that it terminates in finite number of steps can be proved. This algorithm also leads to a reliable algorithm of automatically generating 2D shape regular body-fitted finite element meshes for arbitrarily shaped smooth interfaces. To the authors' best knowledge, this algorithm introduces a new way to generate body-fitted finite element meshes and may be of independent interest.

The second objective of the paper is to study the condition number of the stiffness matrix of high-order unfitted finite element methods which are known to be of the order $O(h^{-2})$ in the literature \cite{Burman10, Johansson, Huang, Badia18, Badia22HHO} on quasi-uniform meshes with the mesh size $h$.
For high order methods, it is known \cite{Prenter} that the condition number of the stiffness matrix may grow exponentially with the finite element approximation order $p$ in terms of the measure of cut cells.  This indicates that the geometry of the cut cells is essential in controlling the condition number of the stiffness matrix.

In this paper, we will take the basis functions of the spectral element, that is, the Lagrangian interpolation functions at the Gauss-Lobatto points on elements not intersecting with the interface. For the interface elements, extra care must be taken as the basis of the spectral element on $K$ is ill-conditioned on the subsets $K_i=K\cap\Om_i$, $i=1,2$, which is similar to the observation in \cite[P.346]{Dubiner} for Legendre polynomials. Here we choose the $L^2$-orthogonal functions on some special polygons inside $K_i$, $i=1,2$, as the basis functions for the interface elements $K$. We show that the condition number of the stiffness matrix is bounded by $\Theta^2(p^3(N-N^\Gamma)+p^4N^\Gamma)$ up to a logarithmic factor, where $N$ is the number of total elements, $N^\Ga$ is the number of interface elements, and $\Theta$ depends on the interface deviation and $p$. This bound is optimal and indicates that the mesh has to sufficiently resolve the geometry of the interface to control the condition number of the stiffness matrix.

The results of this paper allow for extensions in several directions. Firstly, for the ease of exposition, we consider in this paper the case when the domain $\Omega$ is a union of rectangles and the interface is smooth. The extension to the general domains with smooth boundary is straightforward. Secondly, the case when the interface is piecewise smooth will be pursued in our forthcoming work by combining the ideas in \cite{CLX} on large elements and interface deviation for interfaces with singularities with the merging algorithm developed in this paper. Thirdly, the theoretical results in this paper and in \cite{CLX} including the $hp$ domain inverse estimates and the concept of the interface deviation can be extended to study three-dimensional interface problems. The merging algorithm in the three-dimensional case is more challenging. Nevertheless, we believe that with the new insights gained in this paper for the two-dimensional case, reliable algorithms for constructing cubic macro-elements can be achieved in future. Finally, we remark that our argument to analyze and control the condition number of the stiffness matrix is fairly general, it can be used in other unfitted finite element methods including three-dimensional cases.

The layout of the paper is as follows. In section 2 we introduce our unfitted finite element method. In section 3 we construct the merging algorithm to generate the induced mesh. In section 4 we prove the discrete Poincar\'e inequality and the $hp$ estimate for the condition number of the stiffness matrix. In section 5 we present several numerical examples to confirm our theoretical results.

\section{The unfitted finite element method}\label{sec_ufem}
Let $\Om\subset\R^2$ be a domain which is a union of rectangles and $\cT$ a Cartesian finite element mesh of $\Om$
with possible hanging nodes. This allows us to locally refine the mesh near the interface to resolve the geometry to save the computational costs away from the interface. The elements of the mesh are (open) rectangles whose sides are parallel to the coordinate axes. We assume that the interface intersects the boundary of $K$ twice at different sides (including the end points).

For any element $K$, let $h_K$ stand for its diameter. Denote $\cT^\Gamma:=\{K\in\cT:K\cap\Gamma\not=\emptyset\}$ the set of interface elements. We recall the definition of large element in Chen et al \cite[Definition 2.1]{CLX}.

\begin{Def}\label{def:2.1}
(Large element) For $i=1,2$, an element $K\in \caT$ is called a large element with respect to $\Omega_i$ if $K\subset\Omega_i$ or $K\in\caT^\Gamma$ for which there exists a constant $\delta_0\in(0,1/2)$ such that $|e\cap \Omega_i|\ge \delta_0|e|$ for each side $e$ of $K$ having nonempty intersection with $\Omega_i$. Specially, $K$ is called a large element if $K\in\caT^\Gamma$ is large with respect to both $\Omega_1$ and $\Omega_2$. Otherwise, $K$ is called a small element.
\end{Def}

Note that it is possible that $K\in\caT^\Gamma$ may not be a large element. The following assumption in \cite{CLX} is inspired by Johansson and Larson \cite{Johansson} in which a fictitious boundary method is considered.

\medskip
{\bf Assumption (H1)} For each $K\in\cT^\Gamma$, there exists a rectangular macro-element $M(K)$ which is a union of $K$ and its surrounding element (or elements) such that $M(K)$ is a large element. We assume $h_{M(K)}\le C_0h_K$ for some constant {\rev{$C_0>0$}}.
\medskip

In section 3 we will construct a merging algorithm to find the macro-element for each small element in an admissible chain of interface elements. This indicates that the assumption (H1) can always be satisfied by using the algorithm. In the following, we will always set $M(K)=K$ if $K\in\caT^{\Gamma}$ is a large element. Then, the induced mesh of
$\caT$ is defined as
\ben
\cM=\{M(K):K\in \mathcal{T}^\Gamma\}\cup\{K\in\mathcal{T}: K\not \subset M(K') \text{ for some } K'\in \mathcal{T}^{\Gamma}\}.
\een
We will write $\cM={\rm Induced}(\cT)$. Note that $\mathcal{M}$ is also a Cartesian mesh of $\Omega$ in the sense that either $M(K)\cap M(K')=\emptyset$ or $M(K)=M(K')$ for any two different elements $K,K'\in\mathcal{T}$. All elements in $\mathcal{M}$ are large elements.

For any $K\in\mathcal{M}^\Gamma:=\{K\in\mathcal{M}:K\cap\Gamma\not=\emptyset\}$, denote $K_i=K\cap\Om_i$, $i=1,2$, $\Ga_K=\Ga\cap K$, and $\Gamma_K^h$ the open line segment connecting the two intersection points of $\Gamma$ and $\pa K$. $\Ga_K^h$ divides the element $K$ into two polygons $K_1^h$ and $K_2^h$ which are the polygonal approximation of $K_1$ and $K_2$, respectively. An important property of $K$ being a large element is that $K_i^h$, $i=1,2$, is a {\it strongly} shape regular polygon in the sense that it is the union of shape regular triangles in the sense of Ciarlet \cite{Ciarlet}. We remark that there are different definitions of shape regular polygons in the literature, see, e.g., Ming and Shi \cite{Ming} and Brenner and Sung \cite{Brenner}.

The following concept of interface deviation is introduced in \cite{CLX}.

\begin{Def}\label{def:2.2}
For any $K\in\mathcal{M}^\Gamma$, the interface deviation $\eta_K$ is defined as $\eta_K=\max(\eta_K^1,\eta_K^2)$, where for $i=1,2$, if $A_K^i\in\Om_i$ is the vertex of $K$ which has the maximum distance to $\Gamma_K^h$ among all vertices of $K$ in $\Om_i$,
\ben
\eta_K^i=\frac{\dist_{\rm H}(\Gamma_K,\Gamma_K^h)}{\dist(A_K^i,\Gamma_K^h)}.
\een
Here $\dist_{\rm H}(\Gamma_1,\Gamma_2)=\max_{x\in\Gamma_1}(\min_{y\in\Gamma_2}|x-y|)$ and $\dist(A,\Gamma_1)=\min_{y\in\Ga_1}|A-y|$.
\end{Def}

The interface deviation is a measure on how well the mesh resolves the geometry of the interface. We will show in section 4 that this concept also links to the control of the condition number of the stiffness matrix.

It is known that if $\Gamma_K$ is $C^2$-smooth, $\dist_{\rm H}(\Gamma_K,\Gamma_K^h)\le Ch_K^2$ (see, e.g., Feistauer \cite[\S3.3.2]{Feistauer}) and thus $\eta_K\le Ch_K$ for some constant $C$ independent of $h_K$. Therefore, the interface deviation can be made arbitrarily small by locally refining the mesh near the interface.
When the interface $\Gamma$ is Lipschitz and piecewise $C^2$-smooth, the definition of the large element and interface deviation has to be modified in the elements containing the singular points of the interface, see \cite{CLX} for the details.

For any integer $p\ge 1$ and $K\in\mathcal{M}$, denote $Q_p(K)$ the set of polynomials in $K$ which is of degree $p$ in each variable. The following $hp$ domain inverse estimate is proved in \cite[Lemma 2.4]{CLX}.

\begin{lem}\label{lem:2.0}
Let $\Delta$ be a triangle with vertices $A=(a_1,a_2)^T$, $B=(0,0)^T$, $C=(c_1,0)^T$, where $a_2,c_1 >0$. Let $\delta\in (0,a_2)$ and $\Delta_\delta=\{x\in\Delta:{\rm dist}(x,BC)>\delta\}$. Then we have
\ben
\|v\|_{L^2(\Delta)}\le\mathsf{T}\left(\frac{1+\delta a_2^{-1}}{{\rev{1-\delta a_2^{-1}}}}\right)^{2p+3/2}\|v\|_{L^2(\Delta_\delta)}\ \ \forall v\in Q_p(\Delta),
\een
where $\mathsf{T}(t)=t+\sqrt{t^2-1}\ \ \forall t\ge 1$.
\end{lem}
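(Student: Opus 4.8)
The plan is to reduce the statement, in two affine/elementary steps, to a one‑dimensional weighted polynomial inequality, and then to establish that inequality by orthogonal‑polynomial techniques. First I would replace $Q_p(\Delta)$ by the larger space $P_{2p}(\Delta)$ of polynomials of total degree $\le n:=2p$; this is legitimate since $Q_p\subset P_{2p}$ and $P_{2p}$ is affine invariant. The affine map sending $B,C,A$ to $(0,0),(1,0),(0,1)$ carries $\Delta$ to the reference triangle $\widehat\Delta=\{(\xi,\eta):\xi,\eta>0,\ \xi+\eta<1\}$; since affine maps preserve ratios of parallel distances and $A$ lies at (perpendicular) distance $a_2$ from the line $BC$ while its image lies at distance $1$ from $\{\eta=0\}$, the set $\Delta\cap\{\mathrm{dist}(\cdot,BC)>\delta\}$ — which is contained in $\Delta_\delta$, so that proving the inequality with it in place of $\Delta_\delta$ is stronger — is carried onto $\widehat\Delta\cap\{\eta>\mu\}$ with $\mu:=\delta/a_2\in(0,1)$. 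Writing $\rho:=\frac{1+\mu}{1-\mu}>1$, the claim becomes: for every $v\in P_n(\widehat\Delta)$,
\[
\|v\|_{L^2(\widehat\Delta)}\le\mathsf{T}(\rho)^{\,n+3/2}\,\|v\|_{L^2(\widehat\Delta\cap\{\eta>\mu\})}.
\]

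Second, I would foliate $\widehat\Delta$ by the segments from the apex $(0,1)$ to the base $\{\eta=0\}$, i.e.\ use the collapsed (Duffy) coordinates $(s,t)\in(0,1)^2$, $(\xi,\eta)=(st,\,1-s)$, whose Jacobian equals $s$. Then $\{\eta>\mu\}$ corresponds to $\{s<\lambda\}$ with $\lambda:=1-\mu=\tfrac{2}{\rho+1}$, and for each fixed $t$ the function $s\mapsto v(st,1-s)$ is a polynomial of degree $\le n$. Hence $\|v\|_{L^2(\widehat\Delta)}^2=\int_0^1\!\!\int_0^1 v(st,1-s)^2 s\,dt\,ds$ and similarly for $\widehat\Delta\cap\{\eta>\mu\}$ with $\int_0^\lambda$ in place of $\int_0^1$, so by Fubini it suffices (upon squaring) to prove the one‑dimensional weighted estimate
\[
\int_0^1 q(s)^2\,s\,ds\ \le\ \mathsf{T}(\rho)^{\,2n+3}\int_0^\lambda q(s)^2\,s\,ds \qquad\text{for every polynomial }q\text{ of degree }\le n.
\]
(Slicing $\Delta$ by lines parallel to $BC$ instead leads, after writing each cross‑sectional integral as a weighted sum of squares, to the very same one‑dimensional inequality.)

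Third, I would prove this one‑dimensional inequality. A rescaling turns the pair $[0,\lambda]\subset[0,1]$ into $[-1,1]\subset[-1,\rho]$ and the weight $s\,ds$ into a positive multiple of $(1+\sigma)\,d\sigma$, reducing matters to $\int_{-1}^{\rho}q^2(1+\sigma)\,d\sigma\le\mathsf{T}(\rho)^{2n+3}\int_{-1}^{1}q^2(1+\sigma)\,d\sigma$. Split the left side as $\int_{-1}^1+\int_1^\rho$: the first integral is exactly the right‑hand norm, and for the tail, expanding $q=\sum_{j=0}^n c_j P_j^{(0,1)}$ in the Jacobi polynomials orthogonal for $(1+\sigma)\,d\sigma$ on $[-1,1]$ (with $\|P_j^{(0,1)}\|^2=\tfrac{2}{j+1}$) and using Cauchy--Schwarz in $j$ gives
\[
\int_1^\rho q^2(1+\sigma)\,d\sigma\ \le\ \Big(\textstyle\sum_{j=0}^{n}\tfrac{j+1}{2}\int_1^\rho P_j^{(0,1)}(\sigma)^2(1+\sigma)\,d\sigma\Big)\,\|q\|^2_{L^2((1+\sigma)d\sigma;\,[-1,1])}.
\]
For each $j\ge1$ I would integrate the Jacobi Sturm--Liouville identity $\frac{d}{d\sigma}\big[(1-\sigma)(1+\sigma)^2(P_j^{(0,1)})'\big]=-j(j+2)(1+\sigma)P_j^{(0,1)}$ against $P_j^{(0,1)}$ over $[1,\rho]$ and discard a nonnegative Dirichlet term, obtaining
\[
\int_1^\rho P_j^{(0,1)}(\sigma)^2(1+\sigma)\,d\sigma\ \le\ \frac{(\rho-1)(1+\rho)^2}{j(j+2)}\,P_j^{(0,1)}(\rho)\big(P_j^{(0,1)}\big)'(\rho),
\]
the crucial feature being the gain $j^{-2}$. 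Inserting the classical pointwise bounds for $P_j^{(0,1)}$ and its derivative on $[1,\rho]$ (which combine exponential growth at rate $\mathsf{T}(\rho)$ with an algebraic prefactor that is $O(1)$ near $\sigma=1$ and picks up an extra $j^{-1/2}$ once $\sigma$ is bounded away from $1$) and summing the resulting geometric series in $j$ via the identity $\mathsf{T}(\rho)^2-1=2\sqrt{\rho^2-1}\,\mathsf{T}(\rho)$, one bounds the bracketed sum by $\mathsf{T}(\rho)^{2n+3}-1$, which closes the argument.

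I expect the main obstacle to be making this tail estimate uniform as $\rho\downarrow1$: near $\sigma=1$ the Jacobi bounds lose their $j^{-1/2}$ gain and the geometric sums degenerate into sums of order $n$, so the estimate must be carried out separately in the regimes $\rho-1\lesssim n^{-2}$ and $\rho-1\gtrsim n^{-2}$ and then glued. It is precisely the slack in the non‑integer exponent $n+\tfrac32$ (rather than $n+1$) that lets these pieces combine into a single clean bound valid for all $\rho>1$ and all $n$.
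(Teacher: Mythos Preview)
Your reduction is sound: passing from $Q_p$ to $P_{2p}$, sending the triangle affinely to the reference simplex, and then applying the Duffy (collapsed) coordinates correctly turns the statement into the one–dimensional weighted inequality
\[
\int_0^1 q(s)^2\,s\,ds \ \le\ \mathsf{T}(\rho)^{\,2n+3}\int_0^{1-\mu} q(s)^2\,s\,ds,\qquad n=2p,\ \rho=\tfrac{1+\mu}{1-\mu}.
\]
Your Sturm--Liouville step for $P_j^{(0,1)}$ is also correct; integrating $(d/d\sigma)[(1-\sigma)(1+\sigma)^2 P_j']=-j(j+2)(1+\sigma)P_j$ against $P_j$ over $[1,\rho]$ and discarding the sign–definite remainder indeed yields the pointwise tail bound you wrote.

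The genuine gap is the last step, and it is not a matter of stamina but of strategy. The lemma carries \emph{no} multiplicative constant: you must show that the Cauchy--Schwarz sum
\[
\sum_{j=0}^{n}\tfrac{j+1}{2}\!\int_1^\rho \big(P_j^{(0,1)}\big)^2(1+\sigma)\,d\sigma
\]
is bounded by \emph{exactly} $\mathsf{T}(\rho)^{2n+3}-1$, uniformly in $\rho>1$ and $n\ge 0$. Inserting ``classical pointwise bounds'' for $P_j^{(0,1)}$ and $(P_j^{(0,1)})'$ and then summing geometric series, with a separate regime $\rho-1\lesssim n^{-2}$, will at best produce $C\,\mathsf{T}(\rho)^{2n+3}$ with an absolute constant $C>1$; the classical asymptotics for Jacobi polynomials off $[-1,1]$ all carry such constants. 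The ``slack in the exponent'' argument cannot repair this, because the target bound is already written with that exponent and no prefactor.

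By contrast, the paper (through \cite{CLX}) builds the two–dimensional estimate on the \emph{unweighted} one–dimensional inequality~\eqref{g1}. The reason \eqref{g1} is the right intermediate lemma is that for Legendre polynomials the Sturm--Liouville bound combines with the three–term/derivative identity $(x^2-1)L_j'=\tfrac{j(j+1)}{2j+1}(L_{j+1}-L_{j-1})$ to give
\[
\tfrac{2j+1}{2}\!\int_1^\lambda L_j^2 \ \le\ \tfrac12\,L_j(\lambda)\big(L_{j+1}(\lambda)-L_{j-1}(\lambda)\big),
\]
so that the sum over $j$ \emph{telescopes} to $\tfrac12\big(L_p(\lambda)L_{p+1}(\lambda)-1\big)$, and a single comparison $L_m\le T_m$ with the Chebyshev polynomial then delivers the exact constant $\tfrac12(\mathsf{T}(\lambda)^{2p+1}-1)$ in \eqref{g1}. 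There is no equally clean telescoping for the Jacobi $(0,1)$ family in the form your argument requires, which is why your route stalls at the constant. If you want to keep your weighted reduction, the missing idea is to convert it back to the Legendre setting (for example via the identity $(1+\sigma)P_j^{(0,1)}(\sigma)=L_j(\sigma)+L_{j+1}(\sigma)$) so that a telescoping of the above type becomes available, rather than relying on asymptotic pointwise bounds.
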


The proof of this lemma makes use of the following one-dimensional domain inverse estimate in \cite[Lemma 2.3]{CLX}
\beq\label{g1}
\|g\|_{L^2(I_\lambda\backslash\bar I)}^2\le \frac 12\left[(\lambda+\sqrt{\lambda^2-1})^{2p+1}-1\right]\|g\|_{L^2(I)}^2\ \ \forall g\in Q_p(I_\lambda),
\eeq
where $I=(-1,1), I_\lambda=(-\lambda,\lambda)$, $\lambda>1$, and $Q_p(I_\lambda)$ is the set of polynomials of order $p$ in $I_\lambda$. We remark that the growing factor $(\lambda+\sqrt{\lambda^2-1})^{2p+1}$ is sharp which is attained by the Chebyshev polynomials whose explicit expression is $C_n(t)=\frac 12[(t+\sqrt{t^2-1})^n+(t-\sqrt{t^2-1})^n]$, $n\ge 0$, see DeVore and Lorentz \cite[P.76]{DeVore}.

Let $\delta_K:=\dist_{\rm H}(\Gamma_K,\Gamma_K^h)$, We also define two polygons $K_i^{h-\delta_K}$, $i=1,2$, as follows. Let $\Gamma_{K_i}^{h-\delta_K}\subset K_i$ be the line segment which is parallel to $\Gamma_K^h$ and its distance to $\Ga^h_K$ is $\delta_K$. Let $K_i^{h-\delta_K}$ be the polygon bounded by sides of $K$ and $\Ga_{K_i}^{h-\delta_K}$.

\begin{figure}[h]
\centering
\includegraphics[width=0.35\textwidth]{./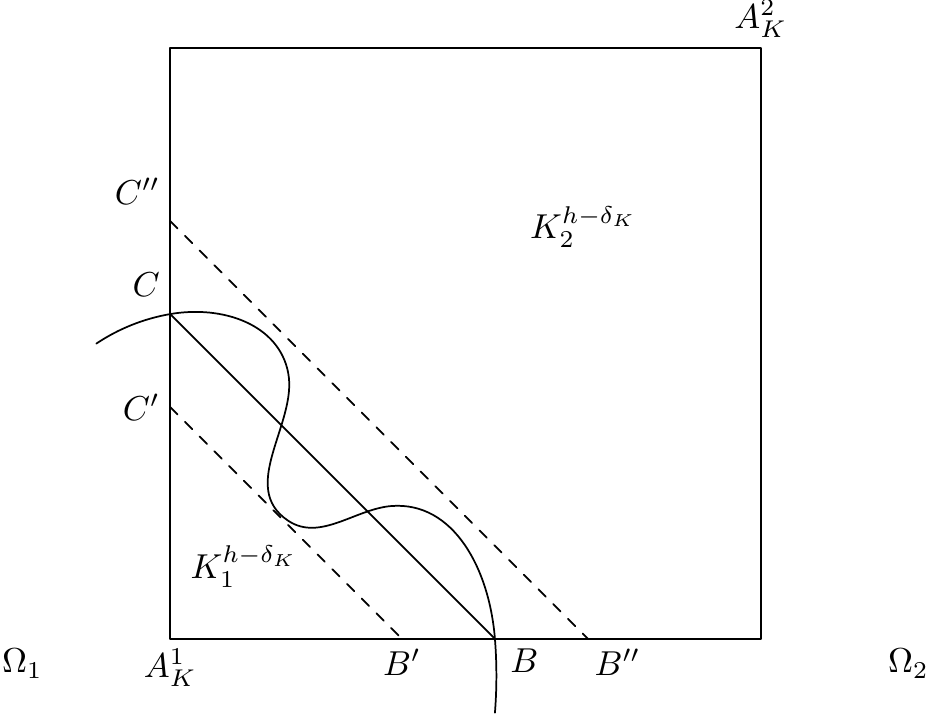}
\caption{The figure used in the proof of Lemma \ref{lem:new}.}\label{fig:2.1}
\end{figure}

\begin{lem}\label{lem:new}
Let $K\in\cM^\Ga$ and $\eta_K\le 1/2$, Then for $i=1,2$, we have
\begin{align}
& \|v_i\|_{L^2(K_i^{h-\delta_K})}\le \|v_i\|_{L^2(K_i)}\le C\mathsf{T}\left(\frac{1+3\eta_K}{1-\eta_K}\right)^{2p+3/2}\|v_i\|_{L^2(K_i^{h-\delta_K})}\ \ \forall v\in Q_p(K),\label{f4}
\end{align}
where the constant $C$ is independent of $h_K$, $p$, and $\eta_K$.
\end{lem}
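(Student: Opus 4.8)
The plan is to first reduce \eqref{f4} to a polynomial inverse inequality between two \emph{polygons} that sandwich $K_i$, and then to establish that inequality by a fan decomposition into triangles to which Lemma \ref{lem:2.0} applies.

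\textbf{Step 1: reduction to nested polygons.} Use Cartesian coordinates $(s,t)$ adapted to the chord, so that the line carrying $\Gamma_K^h$ is $\{t=0\}$ and $\Omega_i\cap K$ lies on the side $\{t>0\}$; then $\Gamma_K^h=K\cap\{t=0\}$, $K_i^h=K\cap\{t\ge0\}$, $K_i^{h-\delta_K}=K\cap\{t\ge\delta_K\}$, and set $K_i^{h+\delta_K}:=K\cap\{t\ge-\delta_K\}$. Since $\delta_K=\dist_{\rm H}(\Gamma_K,\Gamma_K^h)$ is a directed Hausdorff distance, every point of $\Gamma_K$ lies within $\delta_K$ of $\Gamma_K^h$; as $\Gamma_K$ and $\Gamma_K^h$ share endpoints and $\Gamma_K$ is a graph over the chord ($K$ being a large element and $\Gamma_K$ being $C^2$ with $\eta_K\le 1/2$), the region enclosed between the two curves lies in the tube $\{|t|\le\delta_K\}\cap K$. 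Hence $(K_i\setminus K_i^h)\cup(K_i^h\setminus K_i)\subset\{|t|\le\delta_K\}$, which gives, up to a null set, $K_i^{h-\delta_K}\subset K_i\subset K_i^{h+\delta_K}$. The first inequality of \eqref{f4} is then immediate from monotonicity of the $L^2$ norm, and it remains to bound $\|v\|_{L^2(K_i^{h+\delta_K})}$ by $C\,\mathsf{T}\!\big(\tfrac{1+3\eta_K}{1-\eta_K}\big)^{2p+3/2}\|v\|_{L^2(K_i^{h-\delta_K})}$ for $v\in Q_p(K)$.

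\textbf{Step 2: fan decomposition from the extremal vertex.} The polygon $K_i^{h+\delta_K}=K\cap\{t\ge-\delta_K\}$ is convex with at most six vertices. Let $A=A_K^i$ be the vertex of $K$ appearing in the definition of $\eta_K^i$. Because $K$ is a large element, the foot of the perpendicular from $A$ onto the line through $\Gamma_K^h$ lies on the segment $\Gamma_K^h$, so the $t$-coordinate of $A$ equals $d_i:=\dist(A,\Gamma_K^h)$, which is the largest $t$-value occurring in $K_i^{h+\delta_K}$; moreover $\eta_K^i=\delta_K/d_i\le\eta_K\le 1/2$ gives $d_i\ge 2\delta_K$. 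Triangulate $K_i^{h+\delta_K}$ by the fan from $A$: $K_i^{h+\delta_K}=\bigcup_{j=1}^m\Delta_j$ with $\Delta_j$ the triangle $AQ_jQ_{j+1}$, $m\le 4$, and in each $\Delta_j$ the vertex $A$ carries the maximal $t$-value. Put $t_{\min,j}:=\min(t_{Q_j},t_{Q_{j+1}})$, so $-\delta_K\le t_{\min,j}<d_i$, the lower bound because $Q_j,Q_{j+1}\in K_i^{h+\delta_K}$.

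\textbf{Step 3: shrink each triangle towards $A$ and apply Lemma \ref{lem:2.0}.} For each $j$ let $\tilde\Delta_j$ be the image of $\Delta_j$ under the homothety with centre $A$ and ratio $r_j=\min\{1,(d_i-\delta_K)/(d_i-t_{\min,j})\}\in(0,1]$. Since this homothety contracts towards the vertex of maximal $t$, it does not decrease any $t$-value, and the choice of $r_j$ makes the smallest $t$-value of $\tilde\Delta_j$ at least $\delta_K$; as also $\tilde\Delta_j\subset\Delta_j\subset K$, we get $\tilde\Delta_j\subset K\cap\{t\ge\delta_K\}=K_i^{h-\delta_K}$, and the $\tilde\Delta_j$ are pairwise disjoint. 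Geometrically $\tilde\Delta_j$ is the subtriangle of $\Delta_j$ obtained by deleting the strip of relative width $1-r_j$ adjacent to the side opposite $A$, so after an affine reduction to the configuration of Lemma \ref{lem:2.0} that lemma yields $\|v\|_{L^2(\Delta_j)}\le\mathsf{T}\!\big(\tfrac{2-r_j}{r_j}\big)^{2p+3/2}\|v\|_{L^2(\tilde\Delta_j)}$. A short computation using only $t_{\min,j}\ge-\delta_K$ gives $\tfrac{2-r_j}{r_j}\le\tfrac{d_i+3\delta_K}{d_i-\delta_K}=\tfrac{1+3\eta_K^i}{1-\eta_K^i}\le\tfrac{1+3\eta_K}{1-\eta_K}$, the last step because $\eta_K^i\le\eta_K$ and both functions involved are increasing. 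Summing the squared inequalities over the finitely many $j$ and using that the $\tilde\Delta_j$ are disjoint subsets of $K_i^{h-\delta_K}$ produces $\|v\|_{L^2(K_i^{h+\delta_K})}\le C\,\mathsf{T}\!\big(\tfrac{1+3\eta_K}{1-\eta_K}\big)^{2p+3/2}\|v\|_{L^2(K_i^{h-\delta_K})}$, which together with $K_i\subset K_i^{h+\delta_K}$ completes the proof.

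\textbf{Expected main difficulty.} The crux is Step 3: the subtriangles $\tilde\Delta_j$ must simultaneously lie in $K_i^{h-\delta_K}$ \emph{and} have homothety ratios controlled by the single quantity $\frac{1+3\eta_K}{1-\eta_K}$; the extremal triangles are those reaching down to the shifted chord ($t_{\min,j}=-\delta_K$), for which $\tfrac{2-r_j}{r_j}$ equals $\tfrac{d_i+3\delta_K}{d_i-\delta_K}$ exactly, which is what pins down the constant. The remaining ingredients — the graph representation of $\Gamma_K$ over the chord used in the sandwiching, the large-element geometry giving $t_A=d_i$, and the affine reduction of each $\Delta_j$ to the normal form of Lemma \ref{lem:2.0} (with the attendant bookkeeping of polynomial degrees) — are routine given $\eta_K\le 1/2$ and the results of \cite{CLX}.
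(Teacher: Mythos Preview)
Your strategy is the same as the paper's: enclose $K_i$ in a polygon bounded by the shifted chord and sides of $K$, view this polygon as (a union of) triangles with apex $A_K^i$, and apply Lemma~\ref{lem:2.0}. The paper only writes out the single-triangle case --- the ``corner'' side $K_1$ when $\Gamma$ meets neighboring sides, where $K_1\subset\Delta A_K^1B''C''$ and $K_1^{h-\delta_K}=\Delta A_K^1B'C'$, so one call to Lemma~\ref{lem:2.0} with height $d_1+\delta_K$ and strip width $2\delta_K$ gives the bound directly --- and dismisses the pentagon side $K_2$ and the opposite-sides configuration with ``similarly''. Your fan decomposition is exactly what that ``similarly'' has to mean.

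One slip worth flagging: the claim in Step~2 that the perpendicular foot from $A=A_K^i$ onto the chord line lies on the segment $\Gamma_K^h$ (hence $t_A=d_i$) is not true in general for the pentagon side --- e.g.\ take $K=(0,2)\times(0,1)$ with $\Gamma_K^h$ joining $(0.4,0)$ and $(0,0.2)$, which is a large element for $\delta_0=1/5$, and compute that the foot from $(2,1)$ lands at $(1.28,-0.44)$. What your Step~3 inequality actually needs is only $\delta_K/t_A\le\eta_K$, i.e.\ $t_A\ge\min(d_1,d_2)$, and this \emph{does} hold in the corner-cut configuration: on the triangle side the altitude from the right angle always meets the hypotenuse, so $t_{A_K^1}=d_1$; a direct computation gives $t_{A_K^2}/t_{A_K^1}=a/x_0+b/y_0-1>1$, whence $d_2\ge t_{A_K^2}>t_{A_K^1}=d_1$ and $\min(d_1,d_2)=d_1\le t_{A_K^i}$ for both $i$. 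Replace the false equality $t_A=d_i$ by this inequality and your argument goes through unchanged.
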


\begin{proof} The left inequality \eqref{f4} is trivial since $K_i^{h-\delta_K}\subset K_i$. Here we prove the right inequality in \eqref{f4} when $\Ga$ intersects $\pa K$ at neighboring sides. The other cases can be proved similarly.

We use the notation in Fig.\ref{fig:2.1} in which $B'C'$, $B''C''$ are parallel to $\Ga_K^h$ and the distances of $B'C', B''C''$ to $\Ga_K^h$ are $\delta_K$. Then $K_1^{h-\delta_K}=\Delta A_K^1B'C'$ and $K_2^{h-\delta_K}$ is the polygon bounded by sides of $K$ and $B''C''$. Let $d_i=\dist(A_K^i,\Ga_K^h)$, $i=1,2$. By definition, the interface deviation $\eta_K\ge \delta_K/d_i$, $i=1,2$. By Lemma \ref{lem:2.0}, for any $v\in Q_p(K)$,
\ben
\|v\|_{L^2(K_1)}\le\|v\|_{L^2(\Delta A_K^1B''C'')}&\le&\mathsf{T}\left(\frac{1+2\delta/(d_1+\delta)}{1-2\delta/(d_1+\delta)}\right)^{2p+3/2}\|v\|_{L^2(\Delta A_K^1B'C')}\\
&\leq&\mathsf{T}\left(\frac{1+3\eta_K}{1-\eta_K}\right)^{2p+3/2}\|v\|_{L^2(K_1^{h-\delta_K})}.
\een
The case for $K_2$ can be proved similarly. This completes the proof.
\end{proof}

The numerical results in Example 1 in section 5 indicate that the bound in Lemma \ref{lem:new} is sharp. Now for any $K\in\mathcal{M}$, we denote
\ben
a_K=\left\{\begin{array}{ll}
\frac{a_1+a_2}{2}& \mbox{if }K\in\mathcal{M}^\Ga,\\
a_i & \mbox{if }K\in \Omega_i.
\end{array}\right.,\quad
\Theta_K=\left\{\begin{array}{ll}
\mathsf{T}\left(\frac{1+3\eta_K}{1-\eta_K}\right)^{4p+3} & \mbox{if }K\in\mathcal{M}^\Ga,\\
1 & \mbox{otherwise}.
\end{array}\right.
\een

Based on the concept of interface deviation, the following $hp$ inverse estimates on curved domains are proved in \cite[Lemma 2.8, (2.12)]{CLX}.

\begin{lem}\label{lem:2.1}
Let $K\in\cM^\Ga$ and $\eta_K\le 1/2$, Then for $i=1,2$, we have
\ben
& &\|\na v\|_{L^2(K_i)}\le Cp^2h_K^{-1}\Theta_K^{1/2}\|v\|_{L^2(K_i)}\ \ \forall v\in Q_p(K),\\
& &\|v\|_{L^2(\pa K_i)}\le Cp h_K^{-1/2}\Theta_K^{1/2}\|v\|_{L^2(K_i)}\  \ \forall v\in Q_p(K),
\een
where the constant $C$ is independent of $h_K,p,$ and $\eta_K$.
\end{lem}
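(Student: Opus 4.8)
The plan is to reduce both inverse estimates to standard polynomial inverse inequalities on shape regular reference triangles, combined with the domain inverse estimate of Lemma \ref{lem:new} to transfer $L^2$-norms from the curved piece $K_i$ to the polygonal piece $K_i^{h-\delta_K}$ that is safely separated from the interface. First I would fix $K\in\cM^\Ga$ and $i\in\{1,2\}$, and recall that since $K$ is a large element, the polygon $K_i^h$ is strongly shape regular, i.e.\ it admits a triangulation into shape regular triangles whose number is bounded by an absolute constant; the same is true of $K_i^{h-\delta_K}$ when $\eta_K\le 1/2$, since shrinking $\Ga_K^h$ toward $A_K^i$ by a factor controlled by $\eta_K$ keeps all angles and edge-length ratios uniformly bounded (this is where $\eta_K\le 1/2$ is used, so that $d_i-\delta_K\ge d_i(1-\eta_K)\ge d_i/2$).

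For the gradient estimate, the key point is that for a polynomial $v\in Q_p(K)$ the function $\na v$ has components of degree at most $p$ in each variable, so on each shape regular triangle $\Delta$ of the triangulation of $K_i^{h-\delta_K}$ the classical $hp$ inverse inequality on triangles (the degree-$p$ polynomial inverse inequality, scaled) gives $\|\na v\|_{L^2(\Delta)}\le Cp^2 h_\Delta^{-1}\|v\|_{L^2(\Delta)}$ with $C$ depending only on the shape regularity constant; here one uses $h_\Delta^{-1}\le C h_K^{-1}$, which follows from Assumption (H1) and the shape regularity of the subtriangulation. Summing over the finitely many triangles yields $\|\na v\|_{L^2(K_i^{h-\delta_K})}\le Cp^2 h_K^{-1}\|v\|_{L^2(K_i^{h-\delta_K})}$. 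Now I would enlarge the left-hand side from $K_i^{h-\delta_K}$ to $K_i$ (they are nested the wrong way, so instead bound $\|\na v\|_{L^2(K_i)}$ directly on the triangulation of $K_i^h\supset K_i$ as above, getting $\|\na v\|_{L^2(K_i)}\le Cp^2h_K^{-1}\|v\|_{L^2(K_i^h)}$), and then replace $\|v\|_{L^2(K_i^h)}$ — or more conveniently $\|v\|_{L^2(K_i)}$ after noting $K_i^{h-\delta_K}\subset K_i$ and $K_i\subset K_i^h\cup(\text{a thin strip of width }\le\delta_K)$ — by $\|v\|_{L^2(K_i^{h-\delta_K})}$ using Lemma \ref{lem:new}. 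This produces the factor $\mathsf{T}((1+3\eta_K)/(1-\eta_K))^{2p+3/2}$, whose square is exactly $\Theta_K$ as defined (with exponent $4p+3$), giving the claimed $\Theta_K^{1/2}$; finally bound $\|v\|_{L^2(K_i^{h-\delta_K})}\le\|v\|_{L^2(K_i)}$ to land on the stated right-hand side.

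For the trace estimate, I would proceed analogously: $\pa K_i$ consists of parts of the straight sides of $K$ together with $\Ga_K$. On the straight portions one applies the standard scaled trace inverse inequality $\|v\|_{L^2(e)}\le Cph_K^{-1/2}\|v\|_{L^2(\Delta_e)}$ for $v\in Q_p$ on the adjacent shape regular triangle $\Delta_e\subset K_i^h$; on the curved portion $\Ga_K$ one first uses that $\Ga_K$ has length $\le Ch_K$ and lies within $\delta_K$ of $\Ga_K^h$, so a trace estimate onto $\Ga_K$ can be reduced to a trace onto a nearby straight segment inside $K_i^h$ plus a boundary-layer term, both handled by the degree-$p$ one-dimensional trace inequality and the shape regularity of the triangulation; alternatively cite the corresponding estimate already established in \cite{CLX} for the geometry of large elements. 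Summing the finitely many pieces gives $\|v\|_{L^2(\pa K_i)}\le Cph_K^{-1/2}\|v\|_{L^2(K_i^h)}$, and a final application of Lemma \ref{lem:new} (again after passing through $K_i^{h-\delta_K}$) converts the right-hand norm into $\Theta_K^{1/2}\|v\|_{L^2(K_i)}$.

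The main obstacle I expect is not the polynomial inverse inequalities themselves — those are textbook once shape regularity is in hand — but rather the careful bookkeeping needed to move between the three nested-but-not-monotone regions $K_i^{h-\delta_K}\subset K_i$ and $K_i\subset K_i^h$ (up to a thin strip), so that exactly one application of the domain inverse estimate of Lemma \ref{lem:new} is incurred and the exponent comes out as $4p+3$ rather than a larger multiple; one must also verify that the subtriangulation of $K_i^{h-\delta_K}$ (equivalently of $K_i^h$) has shape regularity constant independent of $\eta_K$ and $h_K$, for which the hypothesis $\eta_K\le 1/2$ together with the definition of large element via $\delta_0$ is precisely what is needed. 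Since the statement is quoted verbatim from \cite[Lemma 2.8, (2.12)]{CLX}, in the paper I would in fact just cite that reference; the sketch above indicates how it is proved.
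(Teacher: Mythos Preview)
The paper does not prove this lemma at all; it simply cites \cite[Lemma 2.8, (2.12)]{CLX}, exactly as you note in your final paragraph. Your sketch of how the proof goes in \cite{CLX} is along the right lines---reduce to standard $hp$ inverse and trace inequalities on a shape regular triangulation of a polygonal approximation of $K_i$, and absorb the discrepancy between the curved $K_i$ and the straight-sided polygon via a single application of the domain inverse estimate, picking up the factor $\Theta_K^{1/2}$.

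One point worth tightening: you write $K_i\subset K_i^h$ ``up to a thin strip'', but in fact neither containment holds in general, so you cannot triangulate $K_i^h$ and bound $\|\na v\|_{L^2(K_i)}$ by the sum over its triangles. The clean fix is to work instead with the enlarged polygon $K_i^{h+\delta_K}$ (the region $\Delta A_K^1 B''C''$ in the proof of Lemma \ref{lem:new}), which does contain $K_i$; its shape regular triangulation gives $\|\na v\|_{L^2(K_i)}\le\|\na v\|_{L^2(K_i^{h+\delta_K})}\le Cp^2h_K^{-1}\|v\|_{L^2(K_i^{h+\delta_K})}$, and then Lemma \ref{lem:2.0} applied with the $2\delta_K$-strip (exactly as in the proof of Lemma \ref{lem:new}) yields $\|v\|_{L^2(K_i^{h+\delta_K})}\le\Theta_K^{1/2}\|v\|_{L^2(K_i^{h-\delta_K})}\le\Theta_K^{1/2}\|v\|_{L^2(K_i)}$. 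The same correction applies to the trace estimate. With this adjustment your outline is correct and matches the strategy in \cite{CLX}.
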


We remark that $hp$ inverse estimates on star-shaped curve elements are studied in Massjung \cite{Massjung}, Wu and Xiao \cite{Wu}, and Cangiani et al \cite{Dong} which can be viewed as different forms of assumption on the mesh to resolve the geometry.
 Lemma \ref{lem:2.1} does not require the locally star-shaped assumption on the interface and is robust with respect to small variations of the interface as long as the interface deviation is the same.

Notice that if $\eta_K\le \frac 1{p(p+1)}$, for $s=\frac{1+3\eta_K}{1-\eta_K}=1+\gamma_K$, where $\gamma_K=\frac {4\eta_K}{1-\eta_K}\le 4p^{-2}$, we have
$\mathsf{T}(s)=s+\sqrt{s^2-1}=1+\rho_K$ with $\rho_K=\gamma_K+\sqrt{\gamma_K^2+2\gamma_K}\le p^{-1}(4p^{-1}+\sqrt{16p^{-2}+8})$.
Thus $\Theta_K=e^{(4p+3)\ln(\mathsf{T}(s))}\le e^{(4p+3)\rho_K}\le C$ for some constant $C$ independent of $p$ and $\eta_K$. This motivates us to make the following assumption in the remainder of this paper which can be easily satisfied for $C^2$-smooth interfaces if the mesh is locally refined near the interface.

\medskip
{\bf Assumption (H2)} For any $K\in\mathcal{M}^\Gamma$, $\eta_K\le \frac 1{p(p+1)}$.
\medskip

Now we introduce some notation for DG methods. Let $\cE=\cE^{\rm side}\cup\cE^\Ga\cup\cE^{\rm bdy}$, where $\cE^{\rm side}=\{e=\pa K\cap\pa K':K,K'\in\cM\}$, $\cE^\Ga=\{\Ga_K:K\in\cM\}$, and $\cE^{\rm bdy}=\{e=\pa K\cap\pa\Om:K\in\cM\}$. For $i=1,2$, denote by $\cM_i=\{K\in\cM:K\cap\Om_i\not=\emptyset\}$. Then $\Om_i\subset\Om_i^h=\cup\{K:K\in\cM_i\}$. We denote $\cE_i^{\rm side}$ the set of all sides of $\cM_i$ interior to $\Om_i^h$, that is, not on the boundary $\pa\Om_i^h$. Finally, we set $\bar{\cal{E}}= \cE^{\rm side}_{1} \cup\cE^{\rm side}_{2}\cup\cE^\Ga\cup\cE^{\rm bdy}$.

For any $e\in\cE$, we fix a unit normal vector $n_e$ of $e$ with the convention that $n_e$ is the unit outer normal to $\pa\Om$ if $e\in\cE^{\rm bdy}$ and $n_e$ is the unit outer normal to $\pa\Om_1$ if $e\in\cE^\Ga$. For any $v\in H^1(\cM):=\{v_1\chi_{\Om_1}+v_2\chi_{\Om_2}:v_i|_K\in H^1(K), K\in\cM, i=1,2\}$, we define the jump of $v$ across $e$ as
\ben
\lj v\rj_e:=v_--v_+\ \ \forall e\in\cE^{\rm side}\cup\cE^\Ga,\ \ \ \
\lj v\rj_e:=v_-\ \ \forall e\in\cE^{\rm bdy},
\een
where $v_\pm$ is the trace of $v$ on $e$ in the $\pm n_e$ direction. We define the normal vector function $n\in L^\infty(\cE)$ by
$n|_e=n_e\ \ \forall e\in\cE$.

For any subset $\widehat\cM\subset\cM$ and $\hat\cE\subset\bar\cE$, we use the notation
\ben
(u,v)_{\widehat\cM}:=\sum_{K\in\widehat\cM}(u,v)_K,\ \ \la u,v\ra_{\hat\cE}:=\sum_{e\subset\hat\cE}\la u,v\ra_e,
\een
where $(u,v)_K$ is the inner product of $L^2(K)$ and $\la u,v\ra_e$ is the inner product of $L^2(e)$.

The unfitted finite element method is based on the idea of ``doubling of unknowns" in Hansbo and Hansbo \cite{Hansbo}. We define
the unfitted finite element space as
\ben
\mathbb{X}_p(\cM)=\{v_1\chi_{\Om_1}+v_2\chi_{\Om_2}:v_i|_K\in Q_p(K), K\in\cM,i=1,2\}.
\een
For any $v\in H^1(\cM)$, we denote $\na_hv|_K:=\na v_1\chi_{K_1}+\na v_2\chi_{K_2}$, where $\chi_{K_i}$ is the characteristic function of $K_i$, $i=1,2$. For any $v\in H^1(\cM),g\in L^2(\pa\Om)$, we define the liftings $\mathsf{L}(v)\in [\X_p(\cM)]^2$, $\mathsf{L}_1(g)\in [\X_p(\cM)]^2$ such that
\beq
(w,\mathsf{L}(v))_\cM=\la w^-\cdot n,\lj v\rj\ra_{\cE},\ \ \ \
(w,\mathsf{L}_1(g))_\cM=\la w\cdot n,g\ra_{\cE^{\rm bdy}}\ \ \ \forall w\in [\X_p(\cM)]^2\label{ll1}
\eeq
Our unfitted finite element method is to find $U\in\X_p(\cM)$ such that
\beq\label{a2}
a_h(U,v)=F_h(v)\ \ \ \ \forall v\in\X_p(\cM),
\eeq
where the bilinear form $a_h: H^1(\cM)\times H^1(\cM)\to \R$, and the functional $F_h:H^1(\cM)\to\R$ are
given by
\begin{align}
a_h(v,w)=&(a(\na_h v-\mathsf{L}(v)),\na_h w-\mathsf{L}(w))_\cM+\la\al\lj v\rj,\lj w\rj\ra_{\bar{\cal{E}}}
+\la p^{-2}h\na_T\lj v\rj,\na_T\lj w\rj\ra_{\cE^\Ga},\label{a3}\\
F_h(v)=&(f,v)_\cM-(a\mathsf{L}_1(g),\na_h v-\mathsf{L}(v))_\cM+\la\al g,v\ra_{\cE^{\rm bdy}},\label{a33}
\end{align}
where $\na_T$ is the surface gradient on $\Ga$. For any $v=v_1\chi_{\Om_1}+v_2\chi_{\Om_2}, w=w_1\chi_{\Om_1}+w_2\chi_{\Om_2}\in H^1(\cM)$,
\ben
\la\al\lj v\rj,\lj w\rj\ra_{\bar{\cal{E}}}:=\sum^2_{i=1}\la\al\lj v_i\rj,\lj w_i\rj\ra_{\cE_i^{\rm side}}+\la\al \lj v\rj,\lj w\rj\ra_{\cE^\Ga\cup\cE^{\rm bdy}}.
\een
The interface penalty function $\al\in L^\infty(\cE)$ is
\beq
\al |_e=\al_0 a_e\Theta_eh_e^{-1}p^2\ \ \forall e\in\cE,\label{a34}
\eeq
where $\al_0>0$ is a fixed constant, $a_e=\max\{a_K:e\cap \bar K\not=\emptyset\}\ \forall e\in\cE$, $\Theta_e=\max\{\Theta_K:e\cap \bar K\not=\emptyset\}\ \forall e\in\cE$, and the mesh function $h|_e=(h_K+h_{K'})/2$ if $e=\pa K\cap\pa K'\in\cE^{\rm side}$ and $h|_e=h_K$ if $e=K\cap\Ga\in\cE^\Ga$ or $e=\pa K\cap\pa\Om\in\cE^{\rm bdy}$.

We remark that our unfitted finite element method \eqref{a2} is the so-called local discontinuous Galerkin (LDG) method in Cockburn and Shu \cite{Cockburn} which is different from the interior penalty discontinuous Galerkin (IPDG) method used in \cite{Hansbo}. We choose the LDG method because the penalty constant $\al_0$ in \eqref{a34} can be any fixed constant, while the corresponding penalty constant in the IPDG method has to be sufficiently large to ensure the stability. We refer to Arnold et al \cite{Arnold} for a review of different DG methods for elliptic equations.

Notice that the last term $\la p^{-2}h\na_T\lj v\rj, \na_T\lj w\rj\ra_{\cE^\Ga}$ in the bilinear form \eqref{a3} is not present in \cite{CLX}. It is included in this paper in order to show the discrete Poincar\'e inequality for unfitted finite element functions in Lemma \ref{lem:2.3} which is crucial for us to study the condition number of the stiffness matrix. We also remark that $\la p^{-2}h\na_T\lj v\rj, \na_T\lj w\rj\ra_{\cE^\Ga}$ penalizes the tangential gradient of the finite element solution, not the normal flux of the solution as in Burman and Hansbo \cite{Burman10}, Xiao and Wu \cite{Wu}.

For any $v\in H^2(\cM)$, we introduce the DG norm
\ben
\|v\|_{\rm DG}^2:=\|a^{1/2}\na v\|_\cM^2+\|\al^{1/2}\lj v\rj\|_{\bar{\cE}}^2+\|p^{-1}h^{1/2}\na_T\lj v\rj\|_{\cE^\Ga}^2,
\een
where $\|\al^{1/2}\lj v\rj \|_{\bar{\cE}}^2=\la\al\lj v\rj,\lj v\rj\ra_{\bar{\cE}}$ and $\|p^{-1}h^{1/2}\na_T\lj v\rj\|_{\cE^\Ga}^2=\la p^{-2}h\na_T\lj v\rj,\na_T\lj v\rj\ra_{\cE^\Ga}$. By Lemma \ref{lem:2.1}, it is easy to show that
\ben
a_h(v,v)\le C\|v\|_{\rm DG}^2\ \ \ \forall v\in X_p(\cM).
\een
Moreover, by \cite[Theorem 2.1]{CLX} we know that
\ben
a_h(v,v)\ge c_{\rm stab}\|v\|_{\rm DG}^2\ \ \ \forall v\in\X_p(\cM),
\een
where $c_{\rm stab}>0$ is a constant independent of the mesh sizes, $p$, and the interface deviations $\eta_K$ for all $K\in\cM^\Ga$.

\begin{thm}\label{thm:2.1}
Let the solution of the problem \eqref{m1}-\eqref{m2} $u\in H^k(\Om_1\cup\Om_2)$, $k\ge 2$, and $U\in\X_p(\cM)$ be the solution of the problem \eqref{a2}. Then we have
\ben
\|u-U\|_{\rm DG}\le C\Theta^{1/2}\frac{h^{\min(p+1,k)-1}}{p^{k-3/2}}\|u\|_{H^k(\Om_1\cup\Om_2)},
\een
where $h=\max_{K\in\cM}h_K$, $\Theta=\max_{K\in\cM}\Theta_K$, and the constant $C$ is independent of the mesh sizes, $p$, and the interface deviations $\eta_K$ for all $K\in\cM^\Ga$.
\end{thm}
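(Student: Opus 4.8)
The plan is a Strang-type argument: combine the stated coercivity $a_h(v,v)\ge c_{\rm stab}\|v\|_{\rm DG}^2$ with Galerkin orthogonality and a sharp $hp$ interpolation estimate in the DG norm. The first ingredient is consistency of \eqref{a2}. Since $u$ solves \eqref{m1}-\eqref{m2}, across every interior side of $\cM_i$ the traces of $u_i$ and of $a_i\na u_i\cdot n$ are single valued (because $u_i\in H^k(\Om_i)$, $k\ge2$, and $a_i$ is constant), $\lj u\rj_\Ga=0$ and $\lj a\na u\cdot n\rj_\Ga=0$ on $\Ga$, and $u=g$ on $\pa\Om$; hence $\lj u\rj$ is supported on $\cE^{\rm bdy}$ with value $g$ there, so $\mathsf{L}(u)=\mathsf{L}_1(g)$ by \eqref{ll1}, while the new term $\la p^{-2}h\na_T\lj u\rj,\na_T\lj w\rj\ra_{\cE^\Ga}$ vanishes because $\na_T\lj u\rj=0$ on $\Ga$. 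Integrating \eqref{m1} by parts against $v\in\X_p(\cM)$, collecting the interelement and interface flux terms with the help of $\lj a\na u\cdot n\rj_\Ga=0$, and matching the boundary contribution $\la\al g,v\ra_{\cE^{\rm bdy}}$ in \eqref{a33}, yields $a_h(u,v)=F_h(v)$ for all $v\in\X_p(\cM)$, exactly as in \cite{CLX} for the part of the bilinear form without the tangential penalty; subtracting \eqref{a2} gives $a_h(u-U,v)=0$ for all $v\in\X_p(\cM)$.

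Next I would prove the boundedness $a_h(v,w)\le C\|v\|_{\rm DG}\|w\|_{\rm DG}$ for $v\in H^2(\cM)$ and $w\in\X_p(\cM)$. Expanding \eqref{a3} and applying Cauchy--Schwarz sidewise, all terms are immediate except the liftings: choosing the test vector $w=\mathsf{L}(v)$ in \eqref{ll1}, applying the $hp$ trace inverse estimate of Lemma \ref{lem:2.1} to the polynomial pieces $\mathsf{L}(v)|_{K_i}\in[Q_p(K)]^2$, and substituting the explicit weight \eqref{a34}, the factors $p$, $h^{-1/2}$, $\Theta^{1/2}$ produced by Lemma \ref{lem:2.1} cancel exactly those hidden in $\al$, so that $\|a^{1/2}\mathsf{L}(v)\|_\cM\le C\|\al^{1/2}\lj v\rj\|_{\bar{\cE}}\le C\|v\|_{\rm DG}$ (and similarly for $\mathsf{L}_1(g)$), with $C$ independent of the mesh sizes, $p$, and the $\eta_K$; in particular the $\Theta$'s do not survive here. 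Combined with coercivity and consistency this gives, for any $W\in\X_p(\cM)$, $c_{\rm stab}\|W-U\|_{\rm DG}^2\le a_h(W-U,W-U)=a_h(W-u,W-U)\le C\|W-u\|_{\rm DG}\|W-U\|_{\rm DG}$, hence $\|u-U\|_{\rm DG}\le C\inf_{W\in\X_p(\cM)}\|u-W\|_{\rm DG}$, so it remains to choose a good $W$.

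I would take $W=\Pi u:=\Pi u_1\chi_{\Om_1}+\Pi u_2\chi_{\Om_2}$, where for $i=1,2$ the function $u_i$ is first extended by a Stein extension to $\tilde u_i\in H^k(\R^2)$ with $\|\tilde u_i\|_{H^k(\R^2)}\le C\|u_i\|_{H^k(\Om_i)}$ (possible since $\Ga$ is $C^2$ and $\pa\Om$ is Lipschitz), and then $\Pi u_i|_K\in Q_p(K)$ is the tensor-product $hp$ projection of $\tilde u_i|_K$ on each $K\in\cM$. By the Babu\v{s}ka--Suri estimates, with $\mu:=\min(p+1,k)$, one has $\|\tilde u_i-\Pi u_i\|_{H^m(K)}\le C h_K^{\mu-m}p^{-(k-m)}\|\tilde u_i\|_{H^k(K)}$ for $m=0,1$ together with the sharp trace bounds $\|\tilde u_i-\Pi u_i\|_{L^2(\pa K)}\le C h_K^{\mu-1/2}p^{-(k-1/2)}\|\tilde u_i\|_{H^k(K)}$ and $\|\na(\tilde u_i-\Pi u_i)\|_{L^2(\pa K)}\le C h_K^{\mu-3/2}p^{-(k-3/2)}\|\tilde u_i\|_{H^k(K)}$. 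Restricting to $K_i\subset K$ bounds the element gradient term by $C h^{\mu-1}p^{-(k-1)}\|u\|_{H^k(\Om_1\cup\Om_2)}$; since $\lj u\rj=0$ on $\Ga$ and on all interior sides, on any $e\in\bar{\cE}$ the jump $\lj u-\Pi u\rj$ is a difference of the errors $\tilde u_i-\Pi u_i$, estimated on straight sides directly by the trace bounds and on the curved pieces $\Ga_K$ (and the curved parts of $\pa K_i$) by transferring, via the strong shape regularity of the large element $K$ and Lemmas \ref{lem:new}--\ref{lem:2.1} (which are robust in $\eta_K$), to a trace on $\pa K$ or on $K_i^{h-\delta_K}$. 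Collecting contributions, the $\al$-weighted jump term $\|\al^{1/2}\lj u-\Pi u\rj\|_{\bar{\cE}}$, which pairs the penalty weight $a_e\Theta_e h_e^{-1}p^2$ with the sharp trace rate $h^{\mu-1/2}p^{-(k-1/2)}$, dominates and contributes exactly $C\Theta^{1/2}h^{\mu-1}p^{-(k-3/2)}\|u\|_{H^k(\Om_1\cup\Om_2)}$, whereas the element gradient term ($p^{-(k-1)}$) and the tangential-jump term on $\cE^\Ga$ ($p^{-(k-1/2)}$) are strictly smaller in $p$; summing the squared contributions over $K\in\cM$ and using $\sum_K\|\tilde u_i\|_{H^k(K)}^2\le C\|u_i\|_{H^k(\Om_i)}^2$ finishes the proof.

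The step I expect to be the main obstacle is this last one: obtaining the DG-norm interpolation estimate with the \emph{sharp} $p$-powers and with constants independent of the interface deviation. This requires the sharp, $p^{k-1/2}$-scaled boundary approximation estimate (rather than the weaker estimate that follows from merely interpolating the $L^2$ and $H^1$ bounds), and a careful treatment of the curved interface pieces: because $K\in\cM$ is a large element, $K_i$ is a union of shape-regular triangles and $\Ga_K$ lies within $O(\delta_K)$ of its chord $\Ga_K^h$, so the trace and inverse inequalities on $K_i$ and on $\Ga_K$ hold with $\eta_K$-independent constants --- this is precisely the role of Definition \ref{def:2.2} and Lemmas \ref{lem:new}--\ref{lem:2.1}. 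Everything else --- consistency, boundedness, and the abstract Strang estimate --- is routine once the liftings are controlled as above.
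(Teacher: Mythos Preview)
Your approach is essentially the same as the paper's: consistency $a_h(u,v)=F_h(v)$, coercivity plus boundedness to obtain a C\'ea-type estimate, Stein extension followed by the Babu\v{s}ka--Suri $hp$ interpolation, and multiplicative trace inequalities to get the sharp $p^{-(k-1/2)}$ rate on element boundaries. The paper's proof is in fact terser than yours on the abstract part and spends its effort exactly where you predicted, on the interpolation estimate in the DG norm.

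One small correction concerns the curved trace on $\Gamma_K$. You write that this is handled ``via the strong shape regularity of the large element $K$ and Lemmas \ref{lem:new}--\ref{lem:2.1}'', but those lemmas are \emph{polynomial} inverse estimates (they apply to $v\in Q_p(K)$) and cannot be used on the interpolation error $\tilde u_i-\Pi u_i\in H^k(K)$, which is not a polynomial. The paper instead invokes a multiplicative trace inequality valid for arbitrary $w\in H^1(K)$ on the curved subdomain $K_i$ (equation \eqref{a10}, taken from \cite[Lemma 3.1]{Wang} and \cite[Lemma 2.6]{CLX}):
\[
\|w\|_{L^2(\Gamma_K)}\le C\|w\|_{L^2(K_i)}^{1/2}\|\nabla w\|_{L^2(K_i)}^{1/2}+\|w\|_{L^2(\partial K_i\setminus\bar\Gamma_K)},
\]
with $C$ independent of $\eta_K$. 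Together with the standard multiplicative trace inequality on the straight sides of $K$, this immediately converts the volume rates $h_K^{\mu-j}p^{-(k-j)}$, $j=0,1$, into the boundary rate $h_K^{\mu-j-1/2}p^{-(k-j-1/2)}$ on all of $\partial K_i$, and the rest of your counting is correct. So your identification of the obstacle is right; the fix is simply to use the $H^1$ trace inequality \eqref{a10} in place of the polynomial lemmas.
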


\begin{proof} For the sake of completeness, we sketch a proof by using the argument in e.g., Perugia and Sch\"otzau \cite{Perugia}, Wu and Xiao \cite{Wu}. For $i=1,2$, let $\tilde u_i\in H^k(\R^2)$ be the Stein extension (cf., e.g., Adams and Fournier \cite[Theorem 5.14]{Adams}) of $u_i=u|_{\Om_i}\in H^k(\Om_i)$, which is available for any Lipschitz domains, such that $\|\tilde u_i\|_{H^k(\R^2)}\le C\|u_i\|_{H^k(\Om_i)}$. Let $u_I=I_{hp}(\tilde u_1)\chi_{\Om_1}+I_{hp}(\tilde u_2)\chi_{\Om_2}$, where $I_{hp}:H^1(\cM)\to\V_p(\cM)=\Pi_{K\in\cM}Q_p(K)$ is the interpolation operator defined in Babu\v{s}ka and Suri \cite[Lemma 4.5]{Babuska87b}. For any $K\in\cM$, it satisfies that for any $0\le j\le k$,
\beq\label{a8}
\|w-I_{hp}(w)\|_{H^j(K)}\le C\frac{h_K^{\min(p+1,k)-j}}{p^{k-j}}\|v\|_{H^k(K)}\ \ \forall v\in H^k(K),
\eeq
where the constant $C$ is independent of $h_K,p$, but may depend on $k$.
By the multiplicative trace inequality, we have
\ben
\|w\|_{L^2(\pa K)}\le Ch_K^{-1/2}\|w\|_{L^2(K)}+C\|w\|_{L^2(K)}^{1/2}\|\na w\|_{L^2(K)}^{1/2}\ \ \forall w\in H^1(K).
\een
For any $K\in\cM^\Ga$, by Xiao et al \cite[Lemma 3.1]{Wang}, \cite[Lemma 2.6]{CLX}, we have that for $i=1,2$,
\beq\label{a10}
\|w\|_{L^2(\Ga_K)}\le C\|w\|_{L^2(K_i)}^{1/2}\|\na w\|_{L^2(K_i)}^{1/2}+\|w\|_{L^2({\rev{\pa K_i\backslash\bar\Ga_K}})}\ \ \forall w\in H^1(K).
\eeq
Thus we obtain by using \eqref{a8} that for any $K\in\cM$, $j=0,1$,
\ben
\|w-I_{hp}(w)\|_{H^j(\pa K_i)}\le C\frac{h^{\min(p+1,k)-j-1/2}}{p^{k-j-1/2}}\|w\|_{H^k(K)}\ \ \forall w\in H^k(K).
\een
This implies easily that
\beq\label{a9}
\|u-u_I\|_{\rm DG}\le C\Theta^{1/2}\frac{h^{\min(p+1,k)-1}}{p^{k-3/2}}\|u\|_{H^k(\Om_1\cup\Om_2)}.
\eeq
On the other hand, since $a_h(u,v)=F_h(v)\ \ \forall v\in\X_p(\cM)$, we use \eqref{a2} to conclude that
\ben
\|u_I-U\|_{\rm DG}^2\le c_{\rm stab}^{-1}a_h(u_I-U,u_I-U)&=&c_{\rm stab}^{-1}a_h(u_I-u,u_I-U)\\
&\le&C\|u_I-u\|_{\rm DG}\|u_I-U\|_{\rm DG}.
\een
This completes the proof by \eqref{a9} and the triangle inequality.
\end{proof}

To conclude this section, we remark that the same a posteriori error estimate in \cite[Theorem 3.1]{CLX} also holds for the solution $U\in\X_p(\cM)$ in \eqref{a2}. Here we omit the details.

\section{The merging algorithm}\label{sec_algo_1}

In this section, we construct a merging algorithm for the admissible chain  of interface elements so that each small interface element in the chain is included in some macro-element which is a large element. We first introduce the concept of admissible chain in \S 3.1 and five types of patterns of merging small interface elements with their surrounding elements in \S 3.2. We propose our merging algorithm and prove its reliability in \S 3.3.

\subsection{The admissible chain of interface elements}

A chain of interface elements $\mathfrak{C}=\{G_1\rightarrow G_2 \rightarrow\cdots \rightarrow G_n\}$ orderly consists of $n$ interface elements $G_i\in\cT^\Gamma$, $i=1,\cdots,n$, such that $\bar\Gamma_{G_i}\cup\bar\Gamma_{G_{i+1}}$ is a continuous curve, $1\le i\le n-1$. We call $n$ the length of $\mathfrak{C}$ and denote $\mathfrak{C}\{i\}=G_i$, $i=1,\cdots,n$.

For any element $K\in\cT$, we call $N(K)\in\cT$ a neighboring element of $K$ if $K$ and $N(K)$ share a common side, and $D(K)\in\cT$ a diagonal element of $K$ if $K$ and $D(K)$ only share one common vertex. Set $\cS(K)_0=\{K\}$, and for $j\ge 1$, denote $\cS(K)_j=\{K''\in\cT:\exists\,K'\in\cS(K)_{j-1}\ \mbox{such that }\bar K''\cap\bar K'\not=\emptyset\}$, that is, $\cS(K)_j$ is the set of all $k$-th layer
elements surrounding $K$, $0\le k\le j$. Obviously, $\cS(K)_0\subset\cS(K)_1\subset\cdots\subset\cS(K)_j$ for any $ j\ge 1$.

\begin{Def}\label{def:3.1}
A chain of interface elements $\mathfrak{C}$ is called admissible if the following rules are satisfied.
\begin{description}
\item[$1.$] For any $K\in\mathfrak{C}$, all elements in $\cS(K)_2$ have the same size as
that of $K$.
\item[$2.$] If $K\in\mathfrak{C}$ has a side $e$ such that $\bar e\subset \Om_i$, then $e$ must be a side of some neighboring element $N(K)\subset\Om_i$, $i=1,2$.
\item[$3.$] Any elements $K\in\cT\backslash\cT^\Gamma$ can be neighboring at most two elements in $\mathfrak{C}$.
\item[$4.$] For any $K\subset\Om_i$, the interface elements in {$\cS(K)_j$, $j=1,2$,} must be connected in the sense that the interior of the closed
set $\cup\{\bar G: G\in\cS(K)_{j}\cap{\revto{\cT}}^\Ga\}$ is a connected domain.
\end{description}
\end{Def}

\bigskip
\begin{figure}[h]
\centering
\includegraphics[width=0.6\textwidth]{./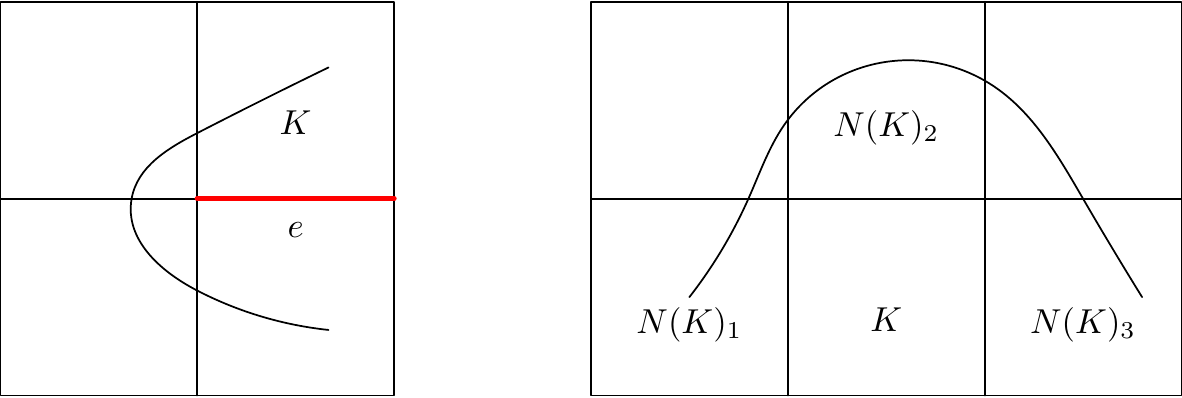}
\caption{The patch of elements not allowed by Rule $2$ (left) and Rule $3$ (right) in Definition \ref{def:3.1}. }\label{fig:3.2}
\end{figure}

\begin{figure}[h]
\centering
\includegraphics[width=0.8\textwidth]{./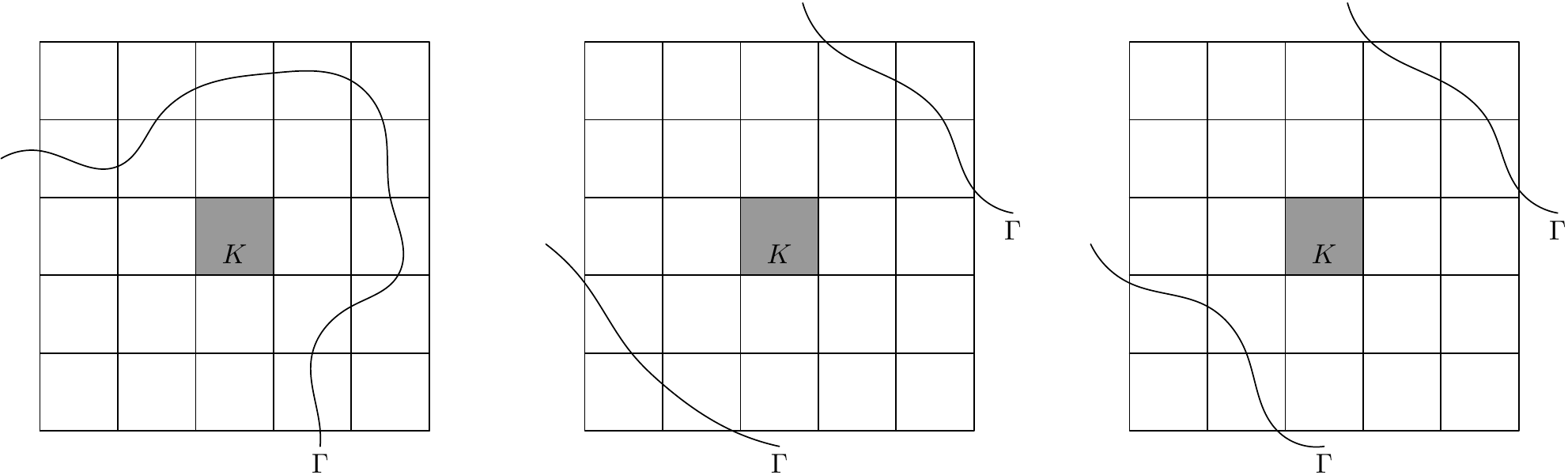}
\caption{The patch of elements not allowed by Rule $4$ in Definition \ref{def:3.1}. }\label{fig:3.3}
\end{figure}

We remark that the four rules of the admissible chains can be easily satisfied if the mesh is well refined near the interface. The purpose of Rules $2$ and $3$ is to exclude the situations illustrated in Fig.\ref{fig:3.2}, in which refinements are required to resolve the geometry of the interface. {By the Rule $4$, the three cases illustrated in Fig.\ref{fig:3.3} are not allowed since the interface elements in $\cS(K)_1$ in the left figure and in $\cS(K)_2$ in the middle and right figures are not connected, where $K$ is the dark element. We notice that the interface elements in $\cS(K)_2$ in the left figure of Fig.\ref{fig:3.3} is however connected.}

\subsection{The patterns}

{\rev{Since the interface intersects the boundary of $K$ twice at different sides (including the end points)}}, the interface intersects any element only in four possible ways as shown in Fig.\ref{fig:3.1}. We denote $\cT_1$ the set of interface elements shown in Fig.\ref{fig:3.1}(a), $\cT_2$ the set of interface elements shown in Fig.\ref{fig:3.1}(b) and (c), and $\cT_3$ the set of interface elements shown in Fig.\ref{fig:3.1}(d). By Definition \ref{def:2.1}, each element in $\cT_3$ is a large element. Thus we only need to consider the merging of type $\cT_1$ and $\cT_2$ elements.

\begin{figure}
\centering
\includegraphics[width=0.8\textwidth]{./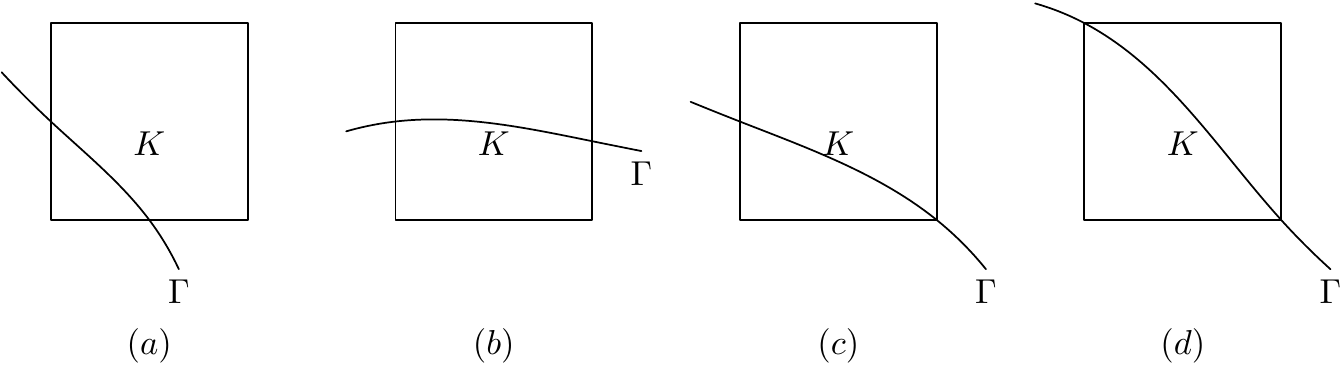}
\caption{Different types of interface elements. The type 2 elements include elements illustrated in (b) and (c). }\label{fig:3.1}
\end{figure}

A pattern is a set of interface elements and their neighboring and diagonal elements whose union consists of a macro-element. We introduce five types of patterns according to the combination of different types of interface elements, which will be used in our merging algorithm for the admissible chain of interface elements. In the following, for any $K\in \cT$, $h_i(K)$ stands for its length of the side of $K$ which is parallel to the $x_i$-axis, $i=1,2$.

\bigskip
{\bf{Pattern 1}}: $K\in\cT_1$ has two neighboring elements $N(K)_1,N(K)_2\in \cT_2$, see Fig.\ref{pattern1}. $e_1$ and $e_2$ are respectively the thick part of the sides of $N(K)_1$ and $N(K)_2$ in the figure. We use Algorithm 1 to obtain the macro-elements $M(K)$, $M(N(K)_1)$, and $M(N(K)_2)$. Here for any closed set $T\subset\R^2$, $T^\circ$ stands for the interior of $T$.

\medskip
\noindent\rule{\textwidth}{0.35mm}
\noindent{\bf{Algorithm 1:}} Pattern 1

\vspace{-0.2cm}
\noindent\rule{\textwidth}{0.35mm}

{\bf{Input:}} $(N(K)_1,K,N(K)_2)$

{\bf {Output:}} $(M(N(K)_1),M(K),M(N(K)_2))$

{\bf {if}} $K$, $N(K)_1$, and $N(K)_2$ are large elements {\bf{then}}

$\quad$ $M(N(K)_1)=N(K)_1$, $M(K)=K$, $M(N(K)_2)=N(K)_2$;

{\bf {else}}

$\quad$ $\quad$ {\bf {if}} $|e_1|/h_2(K) \ge 2\delta_0$ and $|e_2|/h_1(K) < 2\delta_0$ {\bf{then}}

$\quad$ $\quad$ $\quad$ let $M(K)=M(N(K)_1)=M(N(K)_2)=(\overline{K}\cup \overline{N(K)}_1 \cup \overline{N(K)}_2 \cup \overline{{D}(K)})^\circ$;

$\quad$ $\quad$ {\bf {else if}} {$|e_1|/h_2(K) \ge 2\delta_0$ and $|e_2|/h_1(K) < 2\delta_0$} {\bf{then}}

$\quad$ $\quad$ $\quad$ let $M(K)=M(N(K)_1)=M(N(K)_2)=(\overline{K}\cup \overline{N(K)}_1 \cup \overline{N(K)}_2 \cup \overline{{D}(K)}\cup \overline{G}_4\cup \overline{G}_5)^\circ$;

$\quad$ $\quad$ {\bf {else if}} {$|e_1|/h_2(K) < 2\delta_0$ and $|e_2|/h_1(K) \ge 2\delta_0$ } {\bf{then}}

$\quad$ $\quad$ $\quad$ let $M(K)=M(N(K)_1)=M(N(K)_2)=(\overline{K}\cup \overline{N(K)}_1 \cup \overline{N(K)}_2 \cup \overline{{D}(K)}\cup \overline{G}_1\cup \overline{G}_2)^\circ$;

$\quad$ $\quad$ {\bf {else if}} {$|e_1|/h_2(K) < 2\delta_0$ and $|e_2|/h_1(K) < 2\delta_0$} {\bf{then}}

$\quad$ $\quad$ $\quad$ let $M(K)=M(N(K)_1)=M(N(K)_2)=(\overline{K}\cup \overline{N(K)}_1 \cup \overline{N(K)}_2 \cup \overline{{D}(K)}\cup (\cup^5_{j=1}\overline{G}_j))^\circ$.

$\quad$ $\quad$ {\bf {end}}

{\bf {end}}

\vspace{-0.2cm}
\noindent\rule{\textwidth}{0.35mm}
%

\begin{figure}
  \centering
\includegraphics[width=0.6\textwidth]{./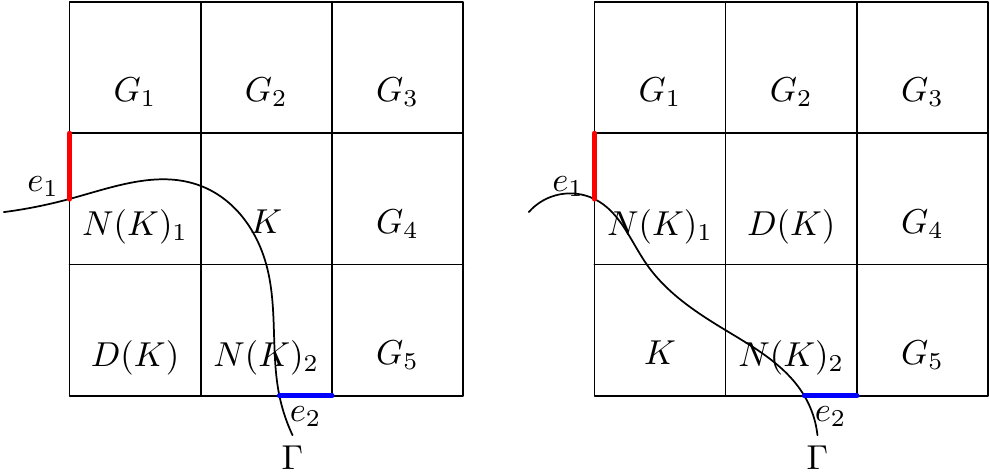}
  \caption{Illustration of type 1 (left) and type 2 (right) patterns.}\label{pattern1}
\end{figure}

\begin{lem}\label{lem:3.1}
Let $\delta_0\in(0,1/3\,]$. The macro-elements $M(K)$, $M(N(K)_1)$, $M(N(K)_2)$ of the output of Algorithm 1 are large elements.
\end{lem}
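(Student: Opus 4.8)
The plan is to verify, case by case according to the branches of Algorithm 1, that each output macro-element $M$ satisfies Definition \ref{def:2.1}, i.e.\ that for every side $e$ of $M$ with $e\cap\Om_i\neq\emptyset$ one has $|e\cap\Om_i|\ge\delta_0|e|$ for $i=1,2$. The key observation is that by Rule~1 of Definition \ref{def:3.1} all elements appearing in a pattern have the same size as $K$, so every side of the macro-element $M$ has length either $h_1(K)$ or $2h_1(K)$ (resp.\ $h_2(K)$ or $2h_2(K)$), and the sides of $M$ are unions of at most two element sides. Since $K$, $N(K)_1$, $N(K)_2$ are type $\cT_1$ or $\cT_2$ elements and the interface crosses $\pa K$ exactly twice on different sides, the portion of $\pa M$ inside $\Om_1$ (resp.\ $\Om_2$) consists of whole element sides together with at most the two ``cut'' sides $e_1,e_2$ on which the quantities $|e_1|/h_2(K)$, $|e_2|/h_1(K)$ are being tested against $2\delta_0$.

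First I would dispose of the trivial branch: if $K,N(K)_1,N(K)_2$ are already large then $M(\cdot)=(\cdot)$ and there is nothing to prove. Then I would treat the four remaining branches. In each branch the strategy is the same: the macro-element $M$ is chosen precisely so that any side of $M$ on which the cut is ``bad'' (i.e.\ $|e_j|/h_\ell(K)<2\delta_0$) has been absorbed into the interior of $M$ by adjoining the diagonal element $D(K)$ and, when necessary, the further elements $G_1,\dots,G_5$; what survives on $\pa M$ is then either a full uncut element side (trivially $|e\cap\Om_i|=|e|\ge\delta_0|e|$) or a side of $M$ of length $2h_\ell(K)$ containing a cut whose $\Om_i$-portion is at least $|e_j|\ge2\delta_0 h_\ell(K)=\delta_0\cdot(2h_\ell(K))$, again giving the required bound because $\delta_0\le 1/3$ guarantees the complementary piece also contributes a full $h_\ell(K)\ge\delta_0\cdot 2h_\ell(K)$ on the other domain. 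I would draw the figure (Fig.\ \ref{pattern1}) and, for each of the four sign patterns of $(|e_1|/h_2(K)-2\delta_0,\ |e_2|/h_1(K)-2\delta_0)$, list the sides of the resulting $M$ and check the inequality for the at most two sides that meet each of $\Om_1,\Om_2$. The role of $D(K)$ and the $G_j$'s is exactly to turn a side of length $h_\ell(K)$ carrying a short cut into either an interior segment or a side of doubled length carrying the same cut.

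The main obstacle, and the only place where real care is needed, is confirming that after merging, $M$ is still a \emph{rectangle} (so that it is an admissible macro-element in the sense of Assumption (H1)) and that the configuration of the interface inside $M$ is such that the cut sides of $M$ are the ones I claim — in other words, that adjoining $D(K)$ (and the $G_j$) does not create a new ``bad'' side that was previously fine. This is where Rules~1 and 2 of Definition \ref{def:3.1} are used: Rule~1 forces uniform element size so the union of $K$, its neighbors, the diagonal element and the $G_j$'s is geometrically a rectangle of dimensions at most $2h_1(K)\times2h_2(K)$, and Rule~2 guarantees that any purely-$\Om_i$ side of $K$ is shared with a genuine neighbor $N(K)\subset\Om_i$, which prevents the pathological patches of Fig.\ \ref{fig:3.2} from arising. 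Once rectangularity and the identification of the cut sides are pinned down, each of the finitely many inequalities is an elementary estimate using $\delta_0\le 1/3$, and the lemma follows. A minor bookkeeping point I would flag is the apparent repetition of the condition ``$|e_1|/h_2(K)\ge2\delta_0$ and $|e_2|/h_1(K)<2\delta_0$'' in the first two branches of Algorithm~1 (the second should read with the first inequality also as $\ge$, i.e.\ both $\ge2\delta_0$); I would state the four genuinely distinct sign patterns explicitly at the start of the proof to avoid ambiguity.
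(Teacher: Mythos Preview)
Your overall plan—verify Definition \ref{def:2.1} side by side for each branch of Algorithm 1—is exactly the paper's approach, and the paper's proof is just the explicit check in the worst (fourth) branch. But two geometric claims in your sketch are wrong and would make the argument, as written, fail.

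First, the macro-element is not ``at most $2h_1(K)\times 2h_2(K)$.'' In the fourth branch the nine elements $K,N(K)_1,N(K)_2,D(K),G_1,\dots,G_5$ form a $3\times 3$ block of size $3h_1(K)\times 3h_2(K)$; the intermediate branches give $2\times 3$ or $3\times 2$ blocks.

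Second, the bad cut $e_j$ is \emph{not} ``absorbed into the interior of $M$''; it remains on $\partial M$. What the extension by $G_1,G_2$ (resp.\ $G_4,G_5$) accomplishes is to add one full element length on the short side of the cut: the side of $M$ through the crossing point now has length $3h_2(K)$, split as $|e_1|+h_2(K)$ and $2h_2(K)-|e_1|$. The paper's proof is then simply
\[
\frac{|e_1|+h_2(K)}{3h_2(K)}\ge\frac13\ge\delta_0,\qquad
\frac{2h_2(K)-|e_1|}{3h_2(K)}\ge\frac13\ge\delta_0,
\]
using only $0\le|e_1|\le h_2(K)$ and $\delta_0\le 1/3$; the branching hypothesis $|e_1|<2\delta_0 h_2(K)$ is not even invoked in the inequality. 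Your estimate $|e_j|\ge 2\delta_0 h_\ell(K)=\delta_0\cdot(2h_\ell(K))$ applies only in the $2\times 2$ branch where both cuts are good. Once you correct the dimensions and replace ``absorption'' by this ``extend the short side by one element'' mechanism, your case-by-case plan goes through. (Your observation about the duplicated branching condition is correct; the paper does contain that typo.)
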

\begin{proof}
We only prove $M(K)$ is a large element when $|e_1|/h_2(K) < 2\delta_0$ and $|e_2|/h_1(K) < 2\delta_0$. The other cases can be proved analogously. Since $\delta_0\in(0,1/3\,]$, we have
\begin{align*}
\frac{|e_1|+h_2(K)}{3\,h_2(K)}\ge \frac 13\ge\delta_0,\ \ \frac{2h_2(K)-|e_1|}{3\,h_2(K)}\ge \frac 13\ge\delta_0.
\end{align*}
Similar inequalities hold for $|e_2|$. Thus $|e\cap \Omega_i|\ge \delta_0|e|$ for each side $e$ of $M(K)$ having nonempty intersection with $\Omega_i$, $i=1,2$. This implies that $M(K)$ is a large element.
\end{proof}

{\bf{Pattern 2}}: $K\in \cT_1$ has two neighboring elements $N(K)_1,N(K)_2\in \cT_1$, see Fig.\ref{pattern1}. $e_1$ and $e_2$ are respectively the thick part of the side of $N(K)_1$ and $N(K)_2$ in the figure. We use Algorithm 2 to obtain $M(K)$, $M(N(K)_1)$, and $M(N(K)_2)$.

\begin{figure}
  \centering
\includegraphics[width=\textwidth]{./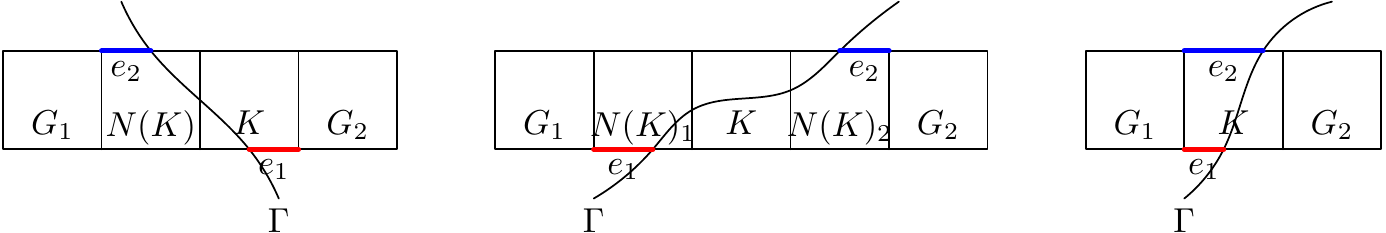}
  \caption{Illustration of type 3 (left), type 4 (middle) and type 5 (right) patterns.}\label{pattern3}
\end{figure}

\medskip
\noindent\rule{\textwidth}{0.35mm}
\noindent{\bf{Algorithm 2:}} Pattern 2

\vspace{-0.2cm}
\noindent\rule{\textwidth}{0.35mm}

{\bf{Input:}} $(N(K)_1,K,N(K)_2)$

{\bf {Output:}} $(M(N(K)_1),M(K),M(N(K)_2))$

{\bf {if}} $K,N(K)_1$, and $N(K)_2$ are large elements {\bf{then}}

$\quad$ let $M(N(K)_1)=N(K)_1$, $M(K)=K$, $M(N(K)_2)=N(K)_2$;

{\bf {else}}

$\quad$ $\quad$ {\bf {if}} $|e_1|/h_2(K) \ge 2\delta_0$ and $|e_2|/h_1(K) \ge 2\delta_0$ {\bf{then}}

$\quad$ $\quad$ $\quad$ let $M(K)=M(N(K)_1)=M(N(K)_2)=(\overline{K}\cup \overline{N(K)}_1 \cup \overline{N(K)}_2 \cup \overline{{D}(K)})^\circ$;

$\quad$ $\quad$ {\bf {else if}} {$|e_1|/h_2(K) \ge 2\delta_0$ and $|e_2|/h_1(K) < 2\delta_0$} {\bf{then}}

$\quad$ $\quad$ $\quad$ let $M(K)=M(N(K)_1)=M(N(K)_2)=(\overline{K}\cup \overline{N(K)}_1 \cup \overline{N(K)}_2 \cup \overline{{D}(K)}\cup \overline{G}_4\cup \overline{G}_5)^\circ$;

$\quad$ $\quad$ {\bf {else if}} {$|e_1|/h_2(K) < 2\delta_0$ and $|e_2|/h_1(K) \ge 2\delta_0$ } {\bf{then}}

$\quad$ $\quad$ $\quad$ let $M(K)=M(N(K)_1)=M(N(K)_2)=(\overline{K}\cup \overline{N(K)}_1 \cup \overline{N(K)}_2 \cup \overline{{D}(K)}\cup \overline{G}_1\cup \overline{G}_2)^\circ$;

$\quad$ $\quad$ {\bf {else if}} {$|e_1|/h_2(K) < 2\delta_0$ and $|e_2|/h_1(K) < 2\delta_0$} {\bf{then}}

$\quad$ $\quad$ $\quad$ let $M(K)=M(N(K)_1)=M(N(K)_2)=(\overline{K}\cup \overline{N(K)}_1 \cup \overline{N(K)}_2 \cup \overline{{D}(K)}\cup(\cup^5_{j=1} \overline{G}_j))^\circ$.

$\quad$ $\quad$ {\bf {end}}

{\bf {end}}

\vspace{-0.2cm}
\noindent\rule{\textwidth}{0.35mm}

\bigskip
{\bf{Pattern 3}}: $K\in \cT_1$ has one neighboring element $N(K)\in \cT_1$, see Fig. \ref{pattern3}. $e_1$ and $e_2$
are respectively the thick part of the side of $K$ and $N(K)$ in the figure. We use Algorithm 3 to obtain $M(K)$, $M(N(K))$.

\medskip
\noindent\rule{\textwidth}{0.35mm}
\noindent{\bf{Algorithm 3:}} Pattern 3

\vspace{-0.2cm}
\noindent\rule{\textwidth}{0.35mm}

{\bf Input:} {$(K,N(K))$}

{\bf Output:} {$(M(K),M(N(K)))$ }

{\bf if}{ $K$ and $N(K)$ are both large elements} {\bf{then}}

$\quad$ let $M(K)=K$, $M(N(K))=N(K)$\;

{\bf {else}}

$\quad$ $\quad$ {\bf {if}} $|e_1|/h_1(K) \ge 2\delta_0$ and $|e_2|/h_1(K) \ge 2\delta_0$ {\bf{then}}

$\quad$ $\quad$ $\quad$ let $M(K)=M(N(K))=(\overline{K}\cup \overline{N(K)})^\circ$;

$\quad$ $\quad$ {\bf {else if}} $|e_1|/h_1(K) \ge 3\delta_0$ {\bf{then}}

$\quad$ $\quad$ $\quad$ let $M(K)=M(N(K))=(\overline{K}\cup \overline{N(K)}\cup \overline{G}_1)^\circ$;

$\quad$ $\quad$ {\bf {else if}} $|e_2|/h_1(K) \ge 3\delta_0$ {\bf{then}}

$\quad$ $\quad$ $\quad$ let $M(K)=M(N(K))=(\overline{K}\cup \overline{N(K)}\cup \overline{G}_2)^\circ$;

$\quad$ $\quad$ {\bf {else}}

$\quad$ $\quad$ $\quad$ let $M(K)=M(N(K))=(\overline{K}\cup \overline{N(K)}\cup \overline{G}_1\cup \overline{G}_2)^\circ$.

$\quad$ $\quad$ {\bf {end}}

{\bf {end}}

\vspace{-0.2cm}
\noindent\rule{\textwidth}{0.35mm}

\bigskip
{\bf{Pattern 4}}: $K\in \cT_2$ has two neighboring elements $N(K)_1,N(K)_2\in \cT_1$, see Fig. \ref{pattern3}. $e_1$ and $e_2$ are respectively the thick part of the side of $N(K)_1$ and $N(K)_2$ in the figure. We use Algorithm 4 to obtain $M(K)$, $M(N(K)_1)$, $M(N(K)_2)$.

\medskip
\noindent\rule{\textwidth}{0.35mm}
\noindent{\bf{Algorithm 4:}} Pattern 4

\vspace{-0.2cm}
\noindent\rule{\textwidth}{0.35mm}

{\bf{Input:}}{ $(N(K)_1,K,N(K)_2)$}

{\bf{Output:}}{ $(M(N(K)_1),M(K),M(N(K)_2))$ }

{\bf {if}}{ $K$, $N(K)_1$, and $N(K)_2$ are all large elements} {\bf then}

$\quad$ {let $M(K)=K$, $M(N(K)_1)=N(K)_1$, $M(N(K)_2)=N(K)_2$;

{\bf {else}}

$\quad$ $\quad$ {\bf{if}}{ $|e_1|/h_1(K) \ge 3\delta_0$ and $|e_2|/h_1(K) \ge 3\delta_0$} {\bf{then}}

$\quad$ $\quad$ $\quad$ let $M(N(K)_1)=M(K)=M(N(K)_2)=(\overline{N(K)}_1\cup \overline{K} \cup \overline{N(K)}_2)^\circ$;

$\quad$ $\quad$ {\bf{else if}} {$|e_2|/h_1(K) \ge 4\delta_0$} {\bf{then}}

$\quad$ $\quad$ $\quad$ let $M(N(K)_1)=M(K)=M(N(K)_2)=(\overline{N(K)}_1\cup \overline{K} \cup \overline{N(K)}_2\cup\overline{G}_1)^\circ$;

$\quad$ $\quad$ {\bf{else if}} {$|e_1|/h_1(K) \ge 4\delta_0$} {\bf{then}}

$\quad$ $\quad$ $\quad$ let $M(N(K)_1)=M(K)=M(N(K)_2)=(\overline{N(K)}_1\cup \overline{K} \cup \overline{N(K)}_2\cup\overline{G}_2)^\circ$;

$\quad$ $\quad$ {\bf{else}}

$\quad$ $\quad$ $\quad$ let $M(N(K)_1)=M(K)=M(N(K)_2)=(\overline{N(K)}_1\cup \overline{K} \cup \overline{N(K)}_2\cup\overline{G}_1\cup \overline{G}_2)^\circ$.

$\quad$ $\quad$ {\bf{end}}

{\bf{end}}

\vspace{-0.2cm}
\noindent\rule{\textwidth}{0.35mm}
%

\bigskip
{\bf{Pattern 5}}: $K\in \cT_2$, see Figure \ref{pattern3}. $e_1$ and $e_2$
are respectively the thick part of the sides of $K$ in the figure. We use Algorithm 5 to obtain $M(K)$.

\medskip
\noindent\rule{\textwidth}{0.35mm}
\noindent{\bf{Algorithm 5:}} Pattern 5

\vspace{-0.2cm}
\noindent\rule{\textwidth}{0.35mm}

{\bf{Input:}}{ $K$}

{\bf{Output:}}{ $M(K)$ }

{\bf {if}}{ $K$ is a large element} {\bf then}

$\quad$ {let $M(K)=K$;

{\bf {else}}

$\quad$ $\quad$ {\bf{if}}{ $|e_1|/h_1(K) <1- 2\delta_0$ and $|e_2|/h_1(K) <1- 2\delta_0$} {\bf{then}}

$\quad$ $\quad$ $\quad$  let $M(K)=(\overline{K}\cup \overline{G}_1)^\circ$;

$\quad$ $\quad$ {\bf{else if}} {$|e_1|/h_1(K) \ge 2\delta_0$ and $|e_2|/h_1(K) \ge 2\delta_0$} {\bf{then}}

$\quad$ $\quad$ $\quad$ let $M(K)=(\overline{K}\cup \overline{G}_2)^\circ$;

$\quad$ $\quad$ {\bf{else}}

$\quad$ $\quad$ $\quad$ let $M(K)=(\overline{K}\cup \overline{G}_1\cup \overline{G}_2)^\circ$.

$\quad$ $\quad$ {\bf{end}}

{\bf{end}}

\vspace{-0.2cm}
\noindent\rule{\textwidth}{0.35mm}
%

\bigskip
The following lemma can be proved by the same argument as that in Lemma \ref{lem:3.1}. Here we omit the details.

\begin{lem}\label{lem:3.2}
The output macro-elements of Algorithm 2, Algorithm 3, Algorithm 4, and Algorithm 5 are large elements if $\delta_0\in(0,1/3\,]$, $\delta_0\in(0,1/4\,]$, $\delta_0\in(0,1/5\,]$, and $\delta_0\in(0,1/3\,]$, respectively.
\end{lem}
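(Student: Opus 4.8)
The plan is to verify, pattern by pattern, that the output of each of Algorithms 2--5 is a large element in the sense of Definition \ref{def:2.1}, imitating the elementary computation already carried out for Algorithm 1 in Lemma \ref{lem:3.1}. In each case the merged macro-element $M$ is a rectangle whose sides are unions of sides (or portions of sides, after the hanging-node structure is accounted for) of the constituent elements; the only thing to check is that for every side $e$ of $M$ meeting $\Omega_i$ we have $|e\cap\Omega_i|\ge\delta_0|e|$ for $i=1,2$. Since the interface meets each constituent element in one of the four ways of Fig.\ref{fig:3.1} and the relevant chord lengths $|e_1|,|e_2|$ are controlled by the branching conditions in the algorithm, each side of $M$ either lies entirely in one $\Omega_i$ (nothing to check), or is split by $\Gamma$ into two pieces whose lengths are bounded below by a fixed fraction of $|e|$. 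The role of the different thresholds on $\delta_0$ ($1/3$, $1/4$, $1/5$, $1/3$) is exactly to guarantee that, after taking into account how many extra cells $G_j$ are appended, the ``short'' piece of every such side still has relative length at least $\delta_0$.

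Concretely, I would organize the proof as four short paragraphs. For Algorithm 2 (Pattern 2): $K,N(K)_1,N(K)_2\in\cT_1$, and in the worst branch $|e_1|/h_2(K)<2\delta_0$, $|e_2|/h_1(K)<2\delta_0$, so one appends $D(K)$ and all five cells $G_1,\dots,G_5$; the macro-element is then a $3h\times 3h$ square (with $h=h_K$), and on each of its boundary sides the interface chord has length $<2\delta_0 h$, so the complementary piece within that side has length $\ge 3h-2\delta_0 h$; dividing by $3h$ gives relative lengths $\ge\tfrac{2\delta_0 h}{3h}\ge\delta_0$ and $\ge 1-\tfrac{2\delta_0}{3}\ge\delta_0$ whenever $\delta_0\le 1/3$, exactly as in Lemma \ref{lem:3.1}. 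For Algorithm 3 (Pattern 3): here $M$ is at most a $2h\times 2h$ rectangle; in the branch where neither $|e_1|/h_1(K)\ge 3\delta_0$ nor $|e_2|/h_1(K)\ge 3\delta_0$ one appends both $G_1,G_2$, and on the critical side of length $2h$ the interface cuts off a piece of length $|e_1|+|e_2|<6\delta_0 h$ (or similar), so the relative lengths on that side are $\ge\tfrac{|e_1|+|e_2|}{2h}$ and $\ge 1-\tfrac{6\delta_0 h}{2h}=1-3\delta_0\ge\delta_0$ when $\delta_0\le 1/4$; the ``$\ge 3\delta_0$'' branches, where only one $G_j$ is appended, are handled the same way and are consistent with the same threshold. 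For Algorithm 4 (Pattern 4): $K\in\cT_2$ with two $\cT_1$-neighbors, so the interface runs through all three cells; in the worst branch both chords are $<4\delta_0 h$ and one appends $G_1,G_2$ to the $3h\times$(height) rectangle, and a side of length $3h$ carries a cut-off piece of length $<4\delta_0 h$ (respectively $<8\delta_0 h$ on the long side if both pieces lie on the same side), forcing the threshold $\delta_0\le 1/5$ so that $1-\tfrac{8\delta_0}{3}$-type quantities and $\tfrac{4\delta_0}{3}$-type quantities stay $\ge\delta_0$. For Algorithm 5 (Pattern 5): $K\in\cT_2$ alone, with at most two cells $G_1,G_2$ appended; in the branch $|e_1|/h_1(K)<1-2\delta_0$, $|e_2|/h_1(K)<1-2\delta_0$ one appends only $G_1$ (a $2h\times h$ or $h\times 2h$ rectangle results), and on the relevant side of length $h$ (or $2h$) the interface-split pieces have lengths $|e_i|$ and $h-|e_i|$ with $h-|e_i|>2\delta_0 h\ge\delta_0\cdot(\text{side length})$ since the side has length $\le 2h$; the complementary branches where $|e_i|/h_1(K)\ge 2\delta_0$ or where both $G_1,G_2$ are appended are symmetric, all compatible with $\delta_0\le 1/3$.

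In every branch the verification is the same two-line inequality: write down the side length $L$ of the candidate macro-element side, note that the interface chord contributes length $m$ with $m<c\,\delta_0 h$ for the constant $c$ dictated by the branch, and check $m/L\ge\delta_0$ and $(L-m)/L\ge\delta_0$; since $L\in\{h,2h,3h\}$ (up to the fixed geometry of the pattern) and $c\in\{2,3,4,6,8\}$, the four stated bounds on $\delta_0$ are precisely what make both inequalities hold. I would present one branch of one algorithm in full (as the excerpt does for Algorithm 1) and assert that the remaining branches and algorithms follow by the identical computation with the arithmetic constants changed; this matches the paper's own phrasing ``can be proved by the same argument as that in Lemma \ref{lem:3.1}.''

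The main obstacle, such as it is, is bookkeeping rather than mathematics: one must correctly identify, for each branch of each algorithm and each appended configuration of the $G_j$'s, the exact shape of the resulting rectangle and which of its sides are actually crossed by $\Gamma$ — this is where the hanging-node structure and Rules 1--4 of Definition \ref{def:3.1} (guaranteeing equal cell sizes in $\cS(K)_2$ and the connectedness that makes the appended $G_j$'s well-defined) are silently used. Once the geometry of each case is pinned down from Figs.\ \ref{pattern1} and \ref{pattern3}, the inequalities are immediate. The slight subtlety worth flagging explicitly is that in Patterns 4 and 5 the interface can cut a single long side of the macro-element in \emph{two} places (because the constituent interface element is of type $\cT_2$), so the ``removed'' length on that side is the sum of two chords; it is exactly this doubling that degrades the admissible threshold from $1/3$ down to $1/5$ in Algorithm 4.
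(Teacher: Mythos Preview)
Your approach is exactly what the paper intends: the paper's own proof of Lemma \ref{lem:3.2} is simply ``can be proved by the same argument as that in Lemma \ref{lem:3.1}. Here we omit the details.'' So the plan of pattern-by-pattern side-ratio checking is correct.

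However, the arithmetic you sketch contains a genuine slip that would not survive being written out. For Pattern~2 you claim the short piece on the $3h$-side has length $<2\delta_0 h$ and then assert $\tfrac{2\delta_0 h}{3h}\ge\delta_0$; this inequality is false for every $\delta_0>0$. The error is geometric: the interface does not meet the boundary of the $3\times 3$ macro-element at distance $|e_1|$ from a corner, but at distance $|e_1|+h$ (one full cell in from the corner, because of the appended layer). The two pieces are therefore $|e_1|+h$ and $2h-|e_1|$, both trivially $\ge h$, which is precisely the computation in Lemma~\ref{lem:3.1}; the branch hypothesis $|e_1|<2\delta_0 h$ is not used to bound the short piece from below, only to trigger the correct macro-element choice. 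The same misreading recurs in your account of Patterns~3--5: in each case the relevant lower bound comes from the buffer cells $G_j$ pushing the crossing into the interior of the side, not from the chord-length conditions themselves.

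Relatedly, your explanation of why $\delta_0\le 1/5$ is needed in Algorithm~4 (``doubling'' because the interface cuts one side twice) is off. The macro-element there is a row of up to five cells; the interface still meets each long side once, and the constraint $|e_i|/(5h_1(K))\ge\delta_0$ in the worst branch (together with the $3\delta_0$, $4\delta_0$ branch thresholds for the shorter rows) is what forces $\delta_0\le 1/5$. Once you correct the crossing positions in each figure, the inequalities are the one-line checks you describe, with the right constants.
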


To conclude this subsection, we make the following observations which can be easily checked from the construction of the patterns.

\begin{rem}\label{rem0}
Only elements in $\{\cS(K)_2:K\in\cT^\Ga\}$ can be possibly merged with small interface elements. The elements two layers away from the interface will not be touched in the merging algorithm.
\end{rem}

\begin{rem}\label{rem1}
An element $G\in\cT_2$ is merged with some element $K\in\cT_1$ if and only if there exists an element $G'\in\cT_2$ such that
$G,K,G'$ form a pattern of type 1 or there exists an element $G'\in\cT_1$ such that $G,K,G'$ form a pattern of type 4.
\end{rem}

\begin{rem}\label{rem2}
An element $G\subset\Om_i$, $i=1,2$, is merged with some element $K\in\cT_1$ such that $K$ and $G$ has only one common vertex, then $G,K$, and two neighboring elements of $K$ are in the same pattern of type 1 or type 2.
\end{rem}

{{\begin{rem}\label{rem3}
If an element $G\subset \Om_i$, $i=1,2$, is merged with some element $K\in\cT^\Ga$ such that $K$ and $G$ has only one common vertex, then $K$ and $G$ are in the same pattern of type 1 or type 2. If $K$ and $G$ are in the same pattern of type 2, then $G$ can be any one of the elements $G_2,G_4,D(K)$ which has only one common vertex with some interface element in Fig.\ref{pattern1} (right). Since the interface elements in $\mathcal{S}(G)_1$ must be connected by the rule 4 of the admissible chain, $G$ cannot have any neighboring element in $\cT_2$. Thus, {\revto if $G$ has a neighboring element $N(G)\in\cT_2$ which is neighboring to $K$}, then $K, N(G),G$ are in the same pattern of type 1, which implies, in particular, that $N(G)$ is merged with $K$.
\end{rem}}}

\subsection{The merging algorithm}

Let $\mathfrak{C}$ be an admissible chain of interface elements. The following algorithm constructs a locally induced mesh from $\mathfrak{C}$ which consists of the large interface elements of $\mathfrak{C}$ and macro-elements including all small elements of $\mathfrak{C}$ so that the elements in the induced mesh are all large elements.

\noindent\rule{\textwidth}{0.35mm}
\noindent{\bf{Algorithm 6:}} The merging algorithm for the admissible chain of interface elements

\vspace{-0.2cm}
\noindent\rule{\textwidth}{0.35mm}

{\bf{Input:}} {The admissible chain $\mathfrak{C}$}

{\bf{Output:}} {The induced mesh ${\rm Induced}(\mathfrak{C})$}

 $1^\circ$ Find all subchains $\mathfrak{S}$ of length $n\ge 2$ of $\mathfrak{C}$ such that $\mathfrak{S}\{i\}\in \cT_1$, $i=1,\ldots, n$;

{\bf{if}} {$n=2k+1$ is odd} {\bf{then}}

$\quad${\bf{for}} $i=1,2,\ldots,k-1$ {\bf{do}}

$\quad$ $\quad$ call the Algorithm 3 with the input $(\mathfrak{S}\{2i\},\mathfrak{S}\{2i+1\})$;

$\quad${\bf{end}}

call the Algorithm 2 with the input $(\mathfrak{S}\{2k-1\},\mathfrak{S}\{2k\},\mathfrak{S}\{2k+1\})$

{\bf{else if}}  {$n=2k$ is even} {\bf{then}}

$\quad$ {\bf{for}} $i=1,2,\ldots,k$ {\bf{do}}

$\quad$ $\quad$ call the Algorithm 3 with the input $(\mathfrak{S}\{2i-1\},\mathfrak{S}\{2i\})$;

$\quad${\bf{end}}

{\bf{end}}

$2^\circ$ Find all subchains $\mathfrak{S}$ of length $n=3$ in the remaining interface elements such that $\mathfrak{S}\{1\}\in \cT_1$, $\mathfrak{S}\{2\}\in \cT_2$, $\mathfrak{S}\{3\}\in \cT_1$;

call the Algorithm 4 with the input $(\mathfrak{S}\{1\},\mathfrak{S}\{2\},\mathfrak{S}\{3\})$;

$3^\circ$ Find all subchains $\mathfrak{S}$ of length $n=3$ in the remaining interface elements such that $\mathfrak{S}\{1\}\in \cT_2$, $\mathfrak{S}\{2\} \in T_1$, $\mathfrak{S}\{3\}\in \cT_2$;

call the Algorithm 1 with the input $(\mathfrak{S}\{1\},\mathfrak{S}\{2\},\mathfrak{S}\{3\})$;

$4^\circ$ Find all elements $K\in \cT_2$ in the remaining interface elements;

{call the Algorithm 5 with the input $K$.}

\vspace{-0.2cm}
\noindent\rule{\textwidth}{0.35mm}

\bigskip
{\rev{ Figure \ref{fig:illustration} illustrates each step in Algorithm 6 starting from an admissible chain of interface elements. The black thin lines represent the boundaries of the elements. We remove the lines which are shared by adjacent elements in steps $1^\circ$ to $4^\circ$, meaning that two adjacent elements have been merged in the same macro-element. }}

 \begin{figure}
   \centering
   \subfigure[initial mesh]{
  \includegraphics[width=0.3\textwidth]{./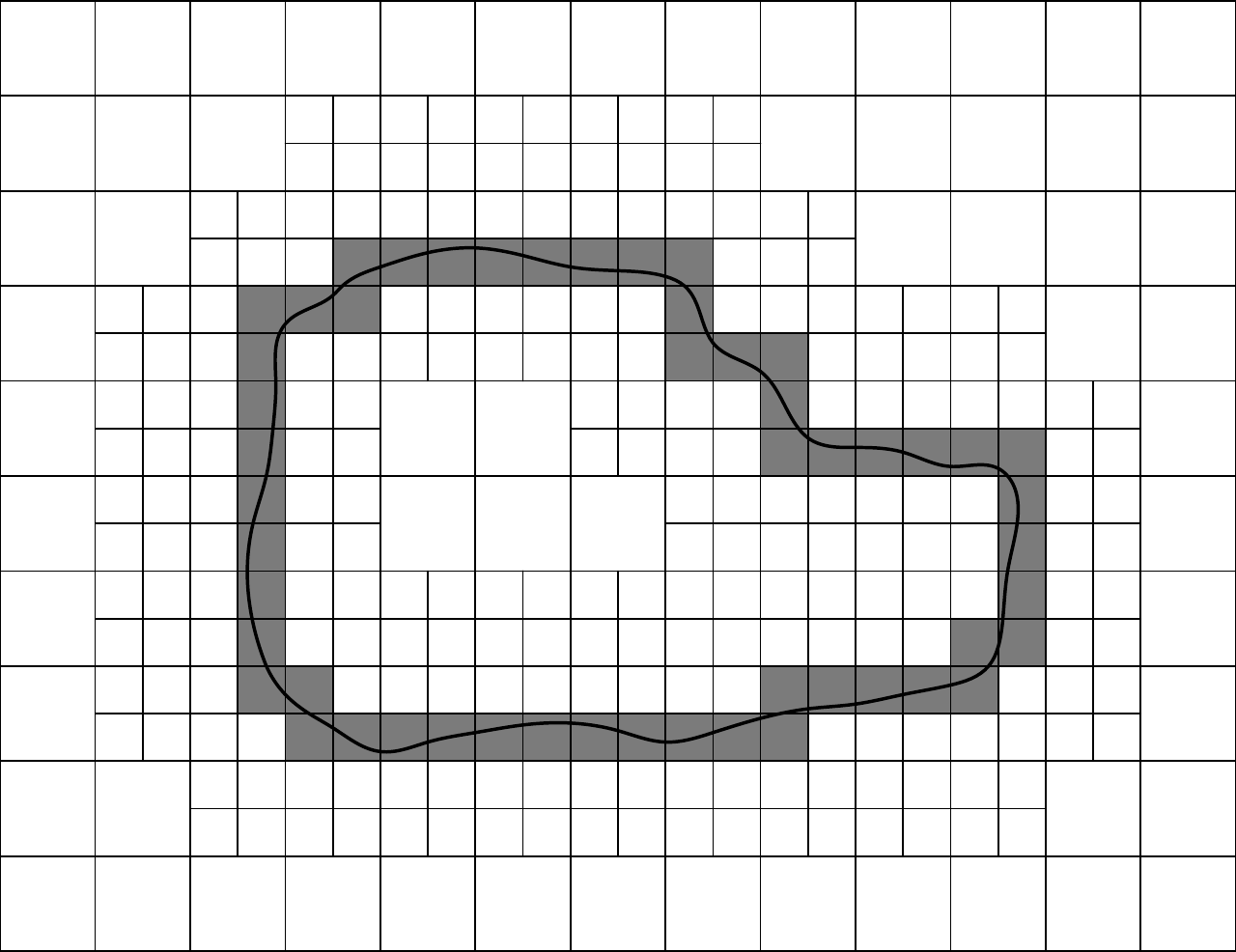}
  }
  \subfigure[step 1]{
  \includegraphics[width=0.3\textwidth]{./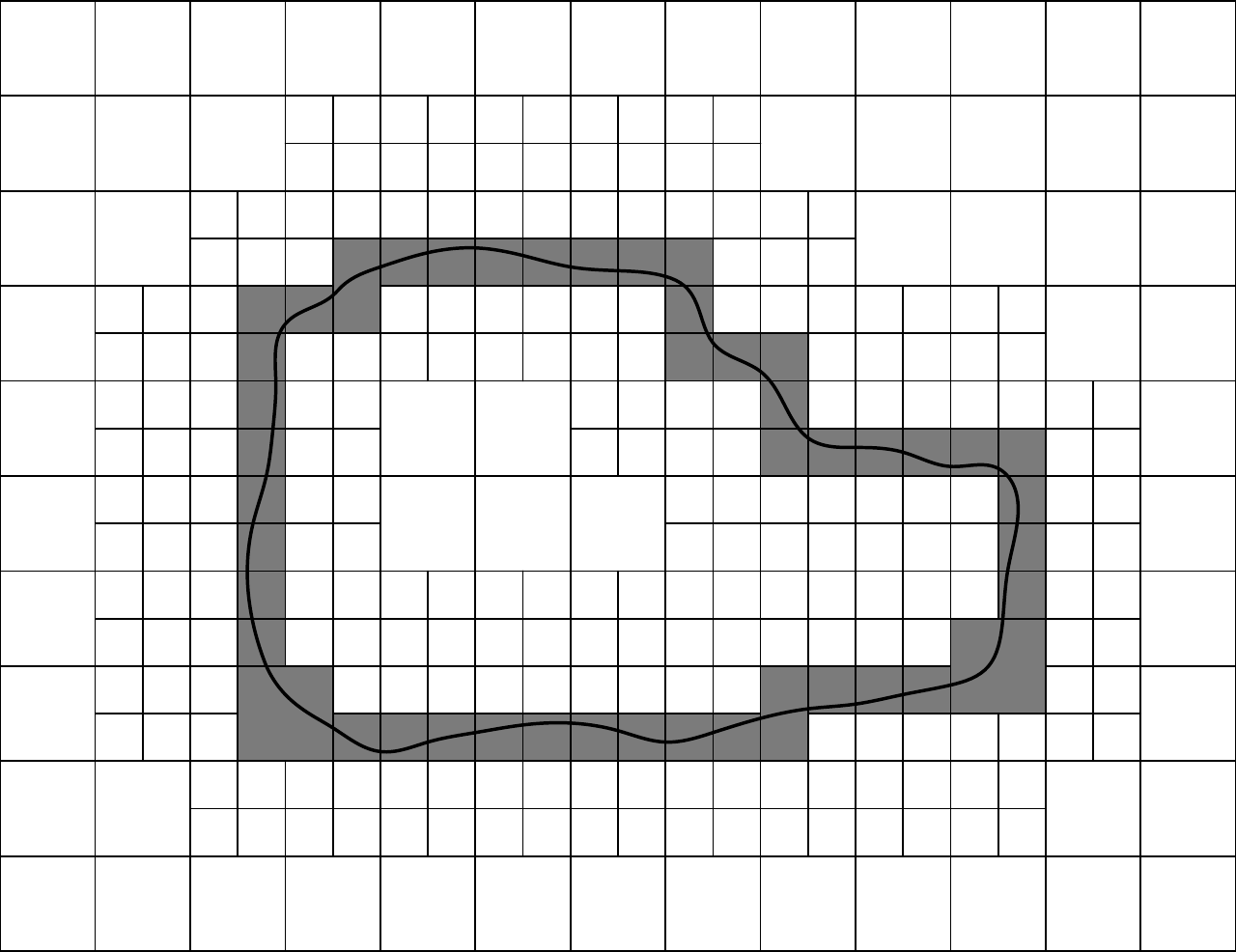}
  }
  \subfigure[step 2]{
  \includegraphics[width=0.3\textwidth]{./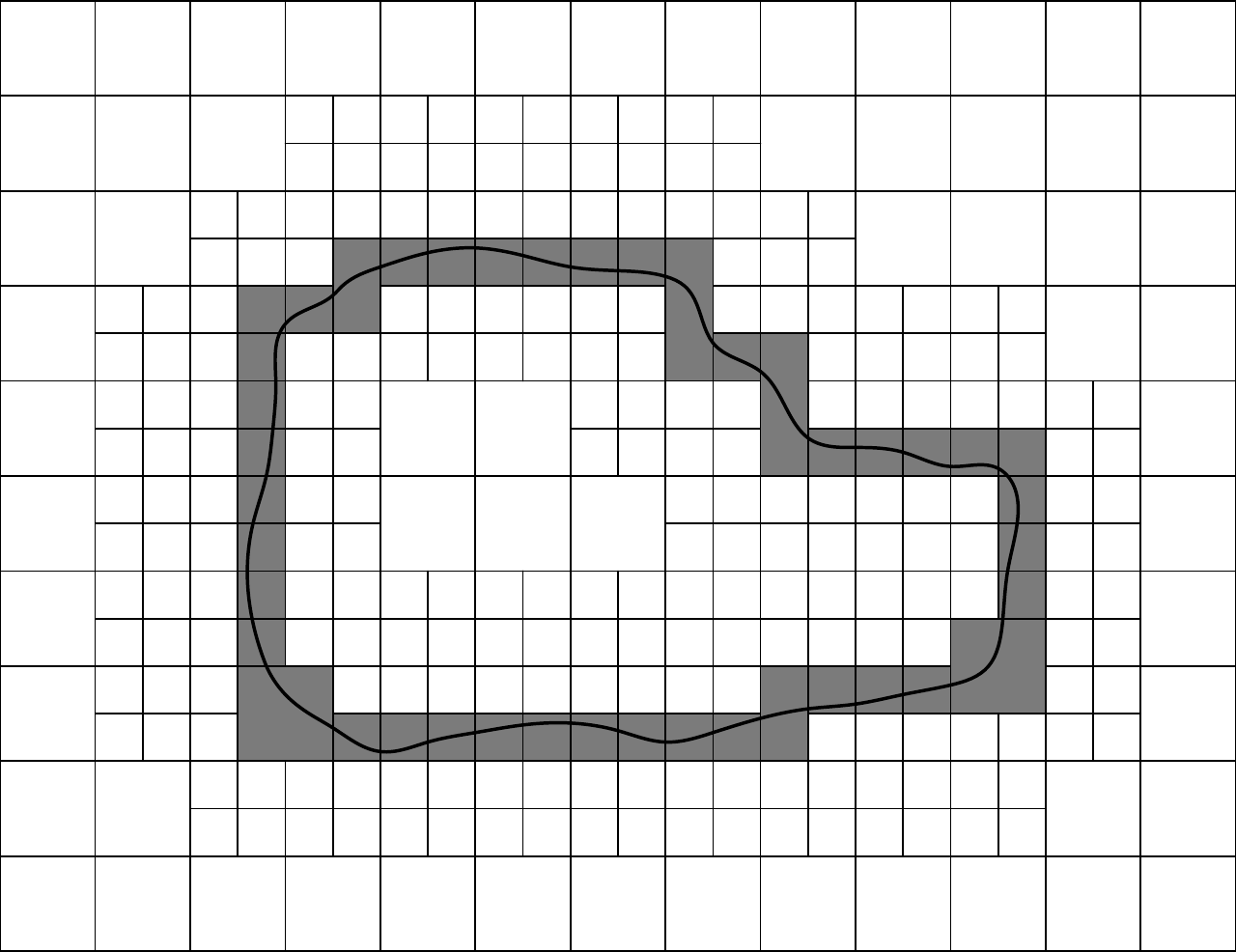}
  }
  \subfigure[step 3]{
  \includegraphics[width=0.3\textwidth]{./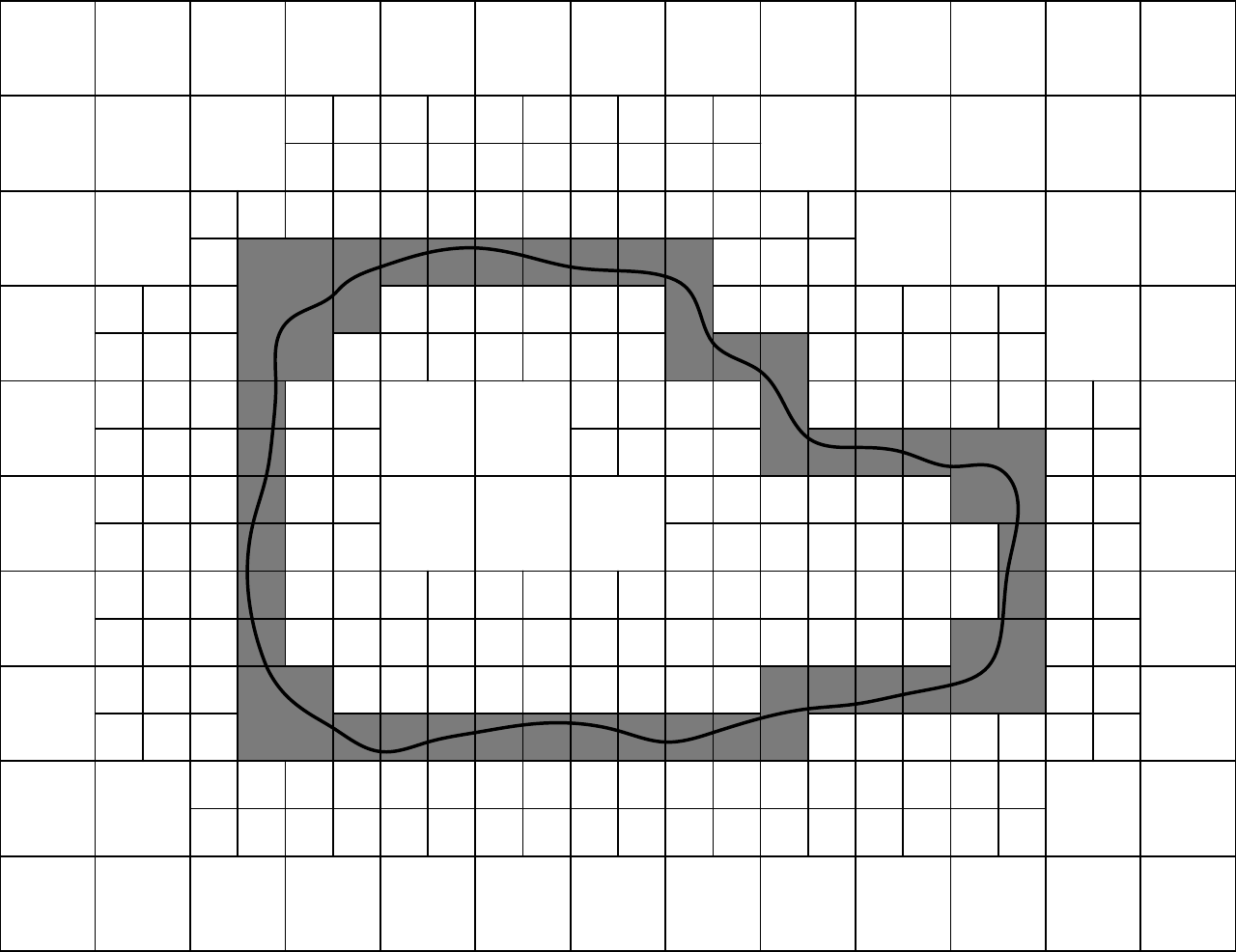}
  }
  \subfigure[step 4]{
  \includegraphics[width=0.3\textwidth]{./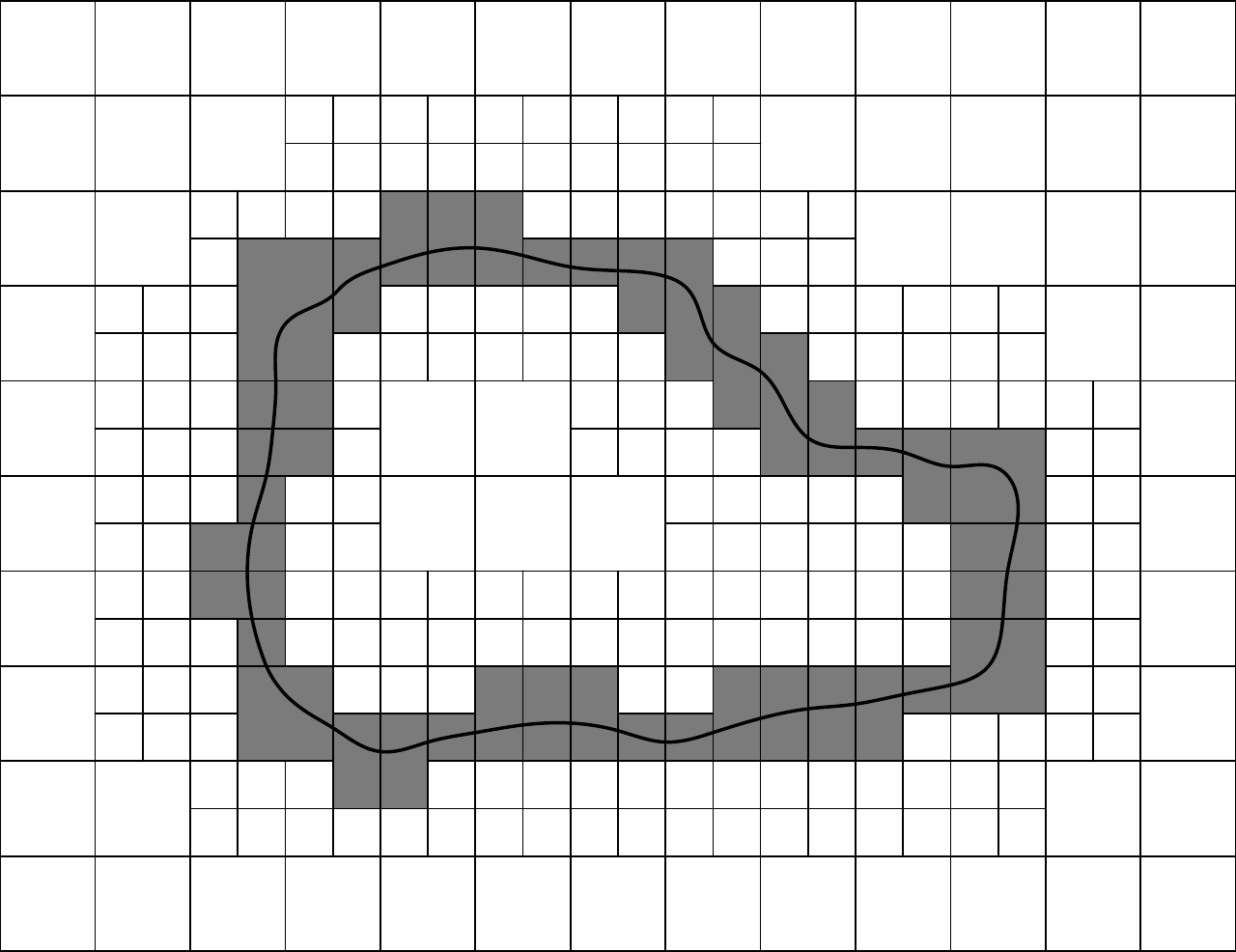}
  }\\
  \includegraphics[width=0.8\textwidth]{./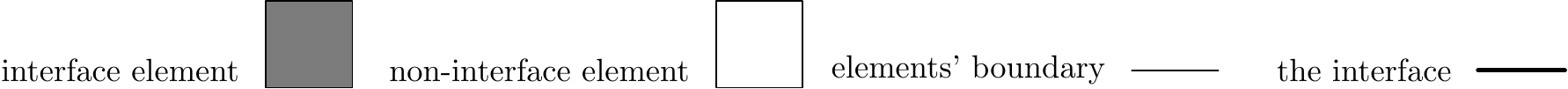}
 \caption{Illustration of the merging algorithm of the admissible chain of interface elements}\label{fig:illustration}
 \end{figure}

{ We notice that for any $K\in\cT\backslash\cT^\Ga$, the Rule 4 of the admissible chain requires that the interface elements in $\mathcal{S}(K)_1$ must be connected. These interface elements may belong to different patterns. {\revto The following lemma shows that the interface elements in $\mathcal{S}(K)_1$ belonging to the union of different patterns must be connected if $K$ belongs to these patterns.} The proof indicates that the order of merging different types of patterns in Algorithm 6 is crucial. The lemma will be used in our proof of the reliability of Algorithm 6.

\begin{lem}\label{lem:3.3} Let $\mathfrak{C}$ be an admissible chain of interface elements of length $n\ge 2$. If $K\in\cT\backslash\cT^\Ga$ is merged with interface elements in $\mathcal{S}(K)_1$ which belong to two different patterns $\cP_1$ and $\cP_2$ by Algorithm 6, then the interface elements in $(\cP_1\cup\cP_2)\cap\cS(K)_1$ are connected.
\end{lem}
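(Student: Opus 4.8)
The plan is to proceed by a case analysis on the types of the two patterns $\cP_1$ and $\cP_2$, using the ordering of Algorithm 6 to restrict which combinations can actually occur. First I would observe that since $K\in\cT\backslash\cT^\Ga$ belongs to both patterns, $K$ is a neighboring or diagonal element of interface elements in each pattern; by Remark \ref{rem0}, $K\in\cS(G)_2$ for each interface element $G$ in $\cP_1\cup\cP_2$ that is merged with $K$, and hence all the interface elements of $(\cP_1\cup\cP_2)\cap\cS(K)_1$ that are merged with $K$ lie within a bounded patch around $K$ whose elements all have the same size as $K$ by Rule 1 of Definition \ref{def:3.1}. The key structural input is Rule 4: the interface elements in $\cS(K)_1$ are connected as a whole. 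What must be shown is that restricting to the sub-collection lying in $\cP_1\cup\cP_2$ does not disconnect them, which a priori could fail only if some interface element of $\cS(K)_1$ that ``bridges'' two components belongs to a third pattern $\cP_3$ not containing $K$.

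Next I would use the ordering $1^\circ$–$4^\circ$ of Algorithm 6 to rule this out. The patterns are formed greedily: step $1^\circ$ first consumes all maximal runs of $\cT_1$ elements, then step $2^\circ$ consumes $\cT_1$–$\cT_2$–$\cT_1$ triples, step $3^\circ$ the $\cT_2$–$\cT_1$–$\cT_2$ triples, and step $4^\circ$ the leftover $\cT_2$ elements. Because $K$ has at most two neighbors and at most ... (via Rule 3, at most two interface neighbors among $\cT\backslash\cT^\Ga$ elements and the geometry of a single vertex), the interface elements of $\cS(K)_1$ adjacent to $K$ form a short consecutive segment of the chain $\mathfrak{C}$, say $G_{j}\to G_{j+1}\to\cdots\to G_{j+m}$ with $m$ small (at most $3$ or $4$, depending on whether $K$ is a neighboring or diagonal element). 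For $K$ to be merged into two distinct patterns $\cP_1$ and $\cP_2$, these patterns must be formed at two different (consecutive) applications of Algorithms 1–5; I would enumerate the finitely many ways a single short consecutive segment of $\mathfrak{C}$ can be partitioned among two patterns, e.g.\ a type-3 pattern (two $\cT_1$'s) followed by another type-3 pattern, or a type-3 pattern abutting a type-4 pattern, etc. In each such enumerated case the two patterns share the vertex or side of $K$, so the interface elements of $(\cP_1\cup\cP_2)\cap\cS(K)_1$ are a consecutive (hence connected) sub-run of $\mathfrak{C}$ — this is where Remarks \ref{rem1}, \ref{rem2}, \ref{rem3} are invoked to pin down which neighboring/diagonal elements $G$ can be merged with which interface elements, in particular that any $G\subset\Om_i$ with only a single common vertex with an interface element is forced (by Rule 4 applied to $\cS(G)_1$) to sit in a type-1 or type-2 pattern with its neighboring interface elements, so it cannot straddle two separated pattern pieces.

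Finally I would assemble the argument: fix the segment $G_j\to\cdots\to G_{j+m}$ of $\cS(K)_1\cap\cT^\Ga$, note by Rule 4 it is connected, and check case by case that the subset lying in $\cP_1\cup\cP_2$ is a consecutive sub-segment $G_{j'}\to\cdots\to G_{j'+m'}$, hence connected, using the ordering of steps $1^\circ$–$4^\circ$ to exclude the configuration where an interface element strictly between two elements of $(\cP_1\cup\cP_2)\cap\cS(K)_1$ has been assigned to a third pattern earlier (which would contradict the maximality of the $\cT_1$-runs consumed in step $1^\circ$, or the specific triple structure of steps $2^\circ$–$3^\circ$). The main obstacle I expect is the bookkeeping in step $1^\circ$: when a maximal $\cT_1$-run of odd length is split as many type-3 pairs plus a terminal type-2 triple, and $K$ happens to be a diagonal element sitting at the junction of the last type-3 pair and the type-2 triple, one must verify that the interface elements of $\cS(K)_1$ lying in the union of those two patterns are still consecutive and that no $\cT_2$ neighbor of $K$ handled later in steps $2^\circ$–$4^\circ$ can re-enter $\cS(K)_1$ between them; controlling exactly which diagonal/neighbor elements of each $\cT_1$ element get absorbed (governed by the $|e_i|/h_i(K)$ thresholds in Algorithms 2 and 3) is the delicate point, and Remarks \ref{rem2} and \ref{rem3} together with Rule 4 are precisely the tools that close it.
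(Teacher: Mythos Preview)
Your plan and the paper's proof use the same ingredients --- Rule~4 of Definition~\ref{def:3.1}, the ordering of steps $1^\circ$--$4^\circ$ in Algorithm~6, and Remarks~\ref{rem1}--\ref{rem3} --- but the paper organizes the case analysis along a different and more efficient axis.

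You propose to case on the \emph{pair of pattern types} of $(\cP_1,\cP_2)$, enumerate how two such patterns can sit along the consecutive segment $G_j\to\cdots\to G_{j+m}=\cS(K)_1\cap\cT^\Ga$, and for each pair rule out a ``bridge'' element lying in a third pattern $\cP_3$. The paper instead introduces a distance $\dist(\cP_1^\Ga,\cP_2^\Ga)\in\{0,1,2,3\}$, namely the minimum number of elements in the ring $\cS(K)_1\setminus\cS(K)_0$ separating the two pattern pieces (distance $0$ being precisely the desired connectedness), and eliminates distances $1,2,3$ by contradiction. Crucially, the subcases are indexed not by the types of $\cP_1,\cP_2$ but by the \emph{position} (neighbor vs.\ diagonal to $K$) and \emph{type} ($\cT_1$ vs.\ $\cT_2$) of the closest pair $D_1\in\cP_1^\Ga$, $D_2\in\cP_2^\Ga$ and of the bridging element $D_3$ between them. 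For instance, in the case $\dist=1$ with $D_3$ diagonal to $K$, Rule~2 forces $D_3\in\cT_1$; one then checks the three subcases for $(D_1,D_2)$ and in each finds that an earlier step of Algorithm~6 would already have placed $D_3$ in a pattern with one of $D_1,D_2$, so that $K$ could never have been consumed by both $\cP_1$ and $\cP_2$. The cases $\dist=2,3$ are dispatched quickly using Remark~\ref{rem3}.

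Your pattern-type enumeration can be made to work, but it front-loads more cases (five pattern types give many pairs, and within each you still have to locate $K$ and the bridge), and the step you summarize as ``would contradict the maximality of the $\cT_1$-runs consumed in step $1^\circ$, or the specific triple structure of steps $2^\circ$--$3^\circ$'' is exactly the place where the paper does concrete work: it is not enough that $\cP_1$ and $\cP_2$ both contain $K$ to conclude their interface pieces abut --- one must argue that the intervening element $D_3$ would have been claimed earlier by one of them. The distance-based decomposition makes this earlier-claim argument once per gap size, using only the local geometry of $D_1,D_2,D_3$, rather than once per pattern pair.
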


\begin{proof} Denote $\cP_j^\Ga:=\cP_j\cap(\mathcal{S}(K)_1\cap\cT^\Ga)$, $j=1,2$, the interface elements of $\cP_j$ in $\mathcal{S}(K)_1$. Let $\dist(\cP_1^\Ga,\cP_2^\Ga)=\min_{D_1\in\cP_1^\Ga,D_2\in\cP_2^\Ga}\dist(D_1,D_2)$, where $\dist(D_1,D_2)$ is the minimum number of non-interface elements connecting $D_1,D_2$ in $\mathcal{S}(K)_1\backslash\mathcal{S}(K)_0$. Clearly, $0\le\dist(\cP_1^\Ga,\cP_2^\Ga)\le 3$ and $\dist(\cP^\Ga_1,\cP^\Ga_2)=0$
implies the interface elements in $(\cP_1\cup\cP_2)\cap\cS(K)_1$ are connected. We now show that $\dist(\cP_1^\Ga,\cP_2^\Ga)\not=0$ is impossible in three steps.

$1^\circ$ ${\rm dist} (\cP_1^\Gamma,\cP_2^\Gamma)=1$. Let $D_1\in\cP_1^\Ga$, $D_2\in\cP_2^\Ga$, ${\rm dist}(D_1,D_2)=1$. By $S(K)_1\cap\cT^\Ga$ is connected and the Rule 3 of the admissible chain, we know that $D_3$, which is the neighboring element to $D_1$ and $D_2$, is in $\cT^\Ga$ and $D_3$ can be either neighboring to $K$ or diagonal to $K$.

(1) If $D_3$ is diagonal to $K$, by the Rule 2 of the admissible chain, $D_3\in \cT_1$, see Fig. \ref{fig1_lemma}. Firstly assume $D_1,D_2\in \cT_1$, see Fig. \ref{fig1_lemma} (a), then there are three elements $D_1\to D_3\to D_2$ forming a chain and all elements belong to $\cT_1$. By our merging Algorithm, $D_1,D_3$ or $D_2,D_3$ will form a pattern of type 3 which will be merged by Algorithm 3 before they can be merged with other interface elements forming a pattern of type 2. But these type 3 patterns will not use $K$, which contradicts to the assumption that $K$ is merged with $D_1$ and $D_2$.

Secondly assume $D_1,D_2\in \cT_2$, see Fig. \ref{fig1_lemma} (b). By the Rule 3 of the admissible chain, $D_5,D_6\in \cT\backslash\cT^\Ga$. If $K$ is merged with its neighboring element $D_1\in \cT_2$ then $D_1$ and $K$ will be in a pattern of type 1 or 5. When $\cP_1$ is a pattern of type 1, since $D_3$ is not merged with $K$, then $D_7\in \cT_1$, $D_8\in \cT_2$, $D_1,D_7,D_8$ can form a pattern of type 1 and merged with $K$. However, in this case,  $D_7,D_1,D_3$ will form a pattern of type 4 which will be merged by Algorithm 4 before $D_1,D_7,D_8$ are merged by Algorithm 1 in the second step of our merging Algorithm. Thus $\cP_1$ cannot be of type 1. Similarly, $\cP_2$ also cannot be of type 1. The remaining case is that $\cP_1$ and $\cP_2$ are both patterns of type 5. But this case is also impossible because $D_1,D_3,D_2$ will form a pattern of type 1 which will be merged by Algorithm 1 before $D_1$, $D_2$ can possibly be merged with $K$ by Algorithm 5 in the third step of our merging Algorithm.

Finally assume $D_1\in \cT_2$, $D_2\in \cT_1$, see Fig. \ref{fig1_lemma} (c). In this case $D_3$ and $D_2$ will be in a pattern of type 2 or 3 which will be merged by Algorithm 2 or 3 in the first step of our merging Algorithm. In both cases, they will not use the element $K$, which contradicts to the assumption that $K$ is merged with $D_2$.

\begin{figure}
  \centering
   \subfigure[]{
  \includegraphics[width=0.25\textwidth]{./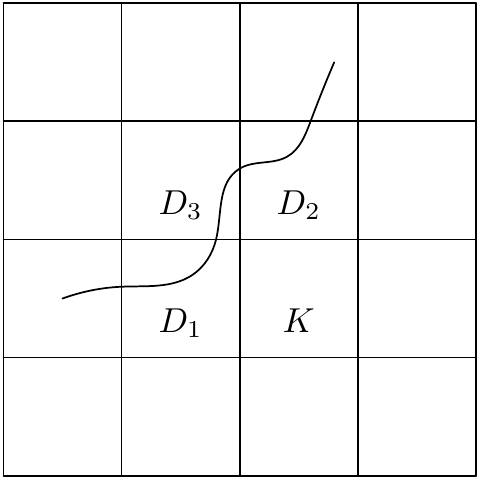}
  }\label{fig1_lemma_1}
  \subfigure[]{
  \includegraphics[width=0.25\textwidth]{./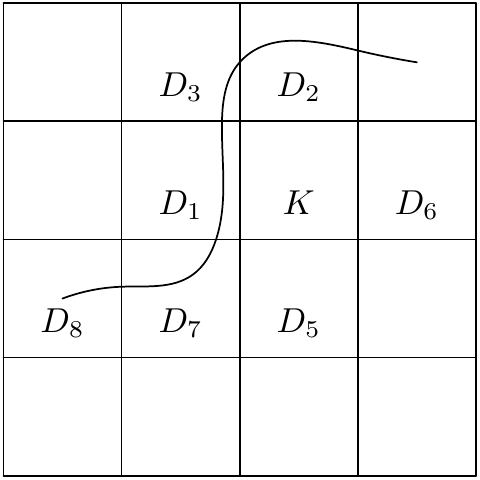}
  }\label{fig1_lemma_2}
  \subfigure[]{
  \includegraphics[width=0.25\textwidth]{./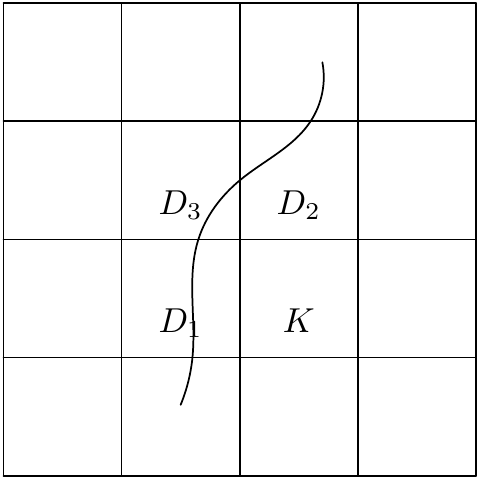}
  }\label{fig1_lemma_3}
  \caption{The element $K$ and $D_1,D_2\in N(K)$.}\label{fig1_lemma}
\end{figure}

(2) If $D_3$ is neighboring to $D_1,D_2$, again since $\mathcal{S}(K)_1\cap\cT^\Ga$ is connected and by the Rule 3 of the admissible chain, $D_3\in\cT_2$, see Fig.\ref{fig2_lemma} (a).
Since $K$ is merged with its diagonal elements $D_1$, and $K$ has a neighboring element $D_3\in \cT_2$, by Remark \ref{rem3}, $D_3$ is merged with $K$. This contradicts to ${\rm dist}(\cP_1,\cP_2)=1$.

$2^\circ$ ${\rm dist} (\cP_1^\Gamma,\cP_2^\Gamma)=2$, see Fig. \ref{fig2_lemma} (b). Since $S(K)_1\cap \cT^\Ga$ is connected, we have $D_3\in \cT_2$, $D_4\in \cT_1$. Again by Remark \ref{rem3}, $D_3$ is merged with $K$, which contradicts to ${\rm dist}(\cP_1,\cP_2)=2$.

\begin{figure}
  \centering
   \subfigure[]{
  \includegraphics[width=0.25\textwidth]{./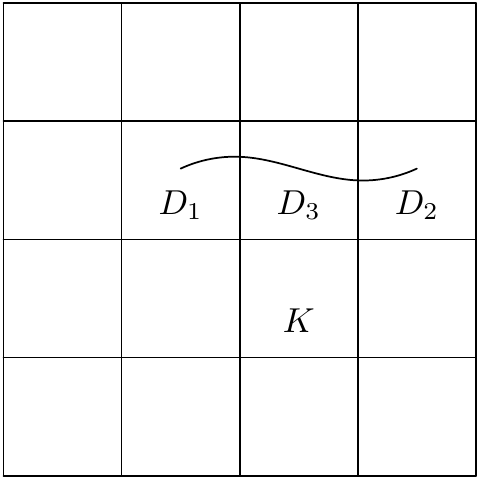}
  }
  \subfigure[]{
  \includegraphics[width=0.25\textwidth]{./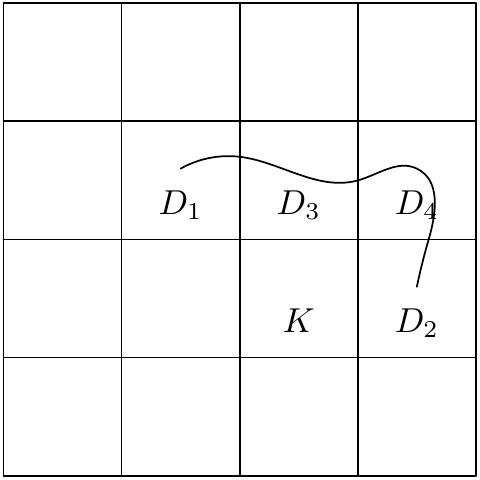}
  }
  \caption{The element $K$ and $D_1,D_2\in D(K)$ (left); The element $K$ and ${\rm dist}(D_1,D_2)=2$ (right).}\label{fig2_lemma}
\end{figure}

$4^\circ$ ${\rm dist} (\cP_1^\Gamma,\cP_2^\Gamma)=3$, see Fig. \ref{fig3_lemma}. There are two possibilities:

(1) $D_1,D_2$ are diagonal to $K$, see Fig. \ref{fig3_lemma} (a). Then $D_3\in \cT_2$, $D_4\in \cT_1$, and $D_5\in \cT_2$. By Remark \ref{rem3}, $D_3,D_5$ are merged with $K$, which contradicts to ${\rm dist}(\cP_1^\Ga,\cP_2^\Ga)=3$.

(2) $D_1,D_2$ are neighboring to $K$, see Fig. \ref{fig3_lemma} (b). This case is also impossible because $S(K)_1\cap \cT^\Ga$ is connected, then it will lead to $K$ has 3 neighboring elements in $\cT^\Ga$ which contradicts to the Rule 3 of the admissible chain.
This completes the proof.
\end{proof}}

\begin{figure}
  \centering
   \subfigure[]{
  \includegraphics[width=0.25\textwidth]{./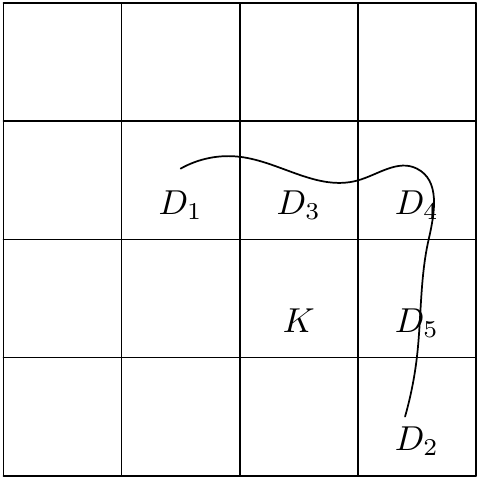}
  }
  \subfigure[]{
  \includegraphics[width=0.25\textwidth]{./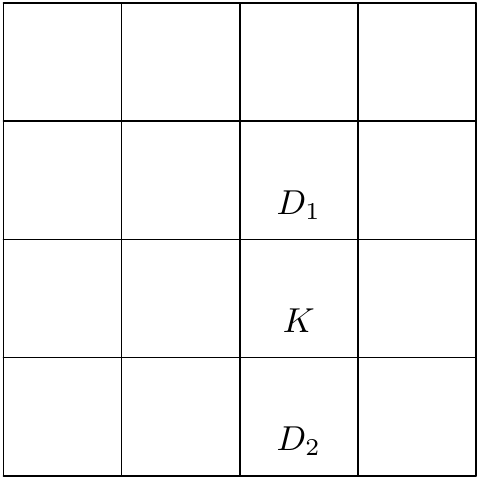}
  }
  \caption{The element $K$ and $D_1,D_2\in D(K),\, {\rm dist}(D_1,D_2)=3$ (left); The element $K$ and $D_1,D_2\in N(K),\, {\rm dist}(D_1,D_2)=3$ (right).}\label{fig3_lemma}
\end{figure}

We attach any chain of interface elements $\mathfrak{C}$ of length $n\ge 1$ an accompany chain $\mathfrak{N}(\mathfrak{C})=\{N_1\to N_2\to\cdots\to N_n\}$ with $N_i=1$ or $2$ according to $\mathfrak{C}\{i\}\in\cT_1$ or $\cT_2$, $i=1,\cdots,n$. The following theorem shows the reliability of the merging algorithm.

\begin{thm}\label{thm:3.1}
Let $\delta_0\in(0,1/5\,]$. For any admissible chain of interface elements $\mathfrak{C}$ {\rev {with length $n\geq2$, if $\mathfrak{C}(1), \mathfrak{C}(n)\in\cT_2$ or $\mathfrak{C}(1)=\mathfrak{C}(n)$, then Algorithm 6 terminates in finite number of steps with input $\mathfrak{C}$.}} All elements of the locally induced mesh ${\rm Induced}(\mathfrak{C})$ are large elements.
\end{thm}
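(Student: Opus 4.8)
The plan is to split the statement into two parts: (i) Algorithm 6 is well defined, performs only finitely many merges, and produces a consistent assignment $K\mapsto M(K)$ in the sense that $M(K)\cap M(K')\in\{\emptyset,M(K)\}$ for all $K,K'\in\mathfrak{C}$; and (ii) every element of ${\rm Induced}(\mathfrak{C})$, i.e.\ every macro-element $M(K)$ with $K\in\mathfrak{C}$, is a large element. Part (ii) is the easy half. If $K\in\mathfrak{C}$ is a large interface element then $M(K)=K$ is large by Definition \ref{def:2.1} (and $\cT_3$-elements are always of this kind); every other output is produced by exactly one of Algorithms 1--5, and since the hypothesis $\delta_0\in(0,1/5\,]$ lies below each of the thresholds $1/3,1/3,1/4,1/5,1/3$ appearing in Lemma \ref{lem:3.1} and Lemma \ref{lem:3.2}, all these outputs are large. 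Here one also invokes Rule 1 of Definition \ref{def:3.1}, which guarantees that the surrounding elements $D(K),G_1,\dots,G_5$ used in the patterns exist and have the same size as $K$, so that every union formed in Algorithms 1--5 is a genuine axis-parallel rectangle.

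For part (i), I would encode $\mathfrak{C}$ by its accompany chain $\mathfrak{N}(\mathfrak{C})$, regarded as a word over $\{1,2\}$ recording whether each element lies in $\cT_1$ or $\cT_2$; the hypothesis on $\mathfrak{C}(1),\mathfrak{C}(n)$ forces this word to have both endpoints equal to $2$, or to be cyclic. Step $1^\circ$ acts precisely on the maximal blocks of two or more consecutive $1$'s; these blocks are pairwise disjoint and contain no $\cT_2$-element, each is consumed by at most one call of Algorithm 2 together with a bounded number of calls of Algorithm 3, and one checks directly from Patterns 2 and 3 that the macro-elements created inside a block are pairwise disjoint. After step $1^\circ$ the surviving interface elements of $\mathfrak{C}$ form chains in which no two $\cT_1$-elements are adjacent, so that, using the endpoint hypothesis, every surviving $\cT_1$-element has both chain-neighbours in $\cT_2$. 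Then step $2^\circ$ consumes the Pattern-4 triples $(\cT_1,\cT_2,\cT_1)$, step $3^\circ$ the Pattern-1 triples $(\cT_2,\cT_1,\cT_2)$ among the remaining elements, and step $4^\circ$ the leftover $\cT_2$-elements as Pattern 5. A short induction on the chain length, using Rule 3 of Definition \ref{def:3.1}, shows that in each step the required length-$3$ subchains can be chosen so that the patterns used are element-disjoint; hence each step treats finitely many subchains (so Algorithm 6 terminates) and every interface element of $\mathfrak{C}$ lands in exactly one pattern.

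The remaining step, which I expect to be the crux, is consistency across steps: no non-interface element $K\in\cT\setminus\cT^\Ga$ may be absorbed into two distinct macro-elements. By Remark \ref{rem0} such $K$ lies in $\cup\{\cS(G)_2:G\in\cT^\Ga\}$, and Remarks \ref{rem1}--\ref{rem3} identify, for every way $K$ can be merged, the pattern it joins and the interface element(s) of $\cS(K)_1$ it is attached to. Suppose $K$ were claimed by two distinct patterns $\cP_1,\cP_2$. By Lemma \ref{lem:3.3} the interface elements of $(\cP_1\cup\cP_2)\cap\cS(K)_1$ would then be connected; one argues, along the lines of the case analysis in the proof of Lemma \ref{lem:3.3} and exploiting the ordering of the steps (within step $1^\circ$, Algorithm 3 before Algorithm 2; then Algorithm 4, then Algorithm 1, then Algorithm 5) together with Rules 2--4 of admissibility, that any such local configuration would already have been merged in an earlier step and so cannot survive to the step in which the alleged conflict occurs. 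This contradiction yields $\cP_1=\cP_2$, so $K\mapsto M(K)$ is well defined; combined with the within-block disjointness from step $1^\circ$ and the element-disjointness of the triples in steps $2^\circ$--$4^\circ$, this gives the full consistency $M(K)\cap M(K')\in\{\emptyset,M(K)\}$ and hence that ${\rm Induced}(\mathfrak{C})$ is a well-defined family of large elements.

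I expect this consistency argument---ruling out two patterns sharing a surrounding element by playing the ordering of steps $1^\circ$--$4^\circ$ against Rules 2--4---to be the technically heaviest part, since (as the proof of Lemma \ref{lem:3.3} already shows) it requires a finite but somewhat lengthy enumeration of local patch configurations. By contrast, the largeness claim and the bookkeeping that each step performs finitely many merges are routine once Lemma \ref{lem:3.1}, Lemma \ref{lem:3.2}, Lemma \ref{lem:3.3} and the word structure of $\mathfrak{N}(\mathfrak{C})$ are in hand.
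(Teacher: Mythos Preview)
Your proposal is correct and follows essentially the same strategy as the paper: the word structure of $\mathfrak{N}(\mathfrak{C})$ to show termination and coverage of all interface elements, Lemmas \ref{lem:3.1}--\ref{lem:3.2} for largeness, and Lemma \ref{lem:3.3} followed by a local case analysis (via Remarks \ref{rem1}--\ref{rem3} and Rules 2--4) to rule out any non-interface element being absorbed into two patterns. The paper's final case analysis is phrased slightly differently---it shows directly that two connected interface elements $D_1\in\cP_1$, $D_2\in\cP_2$ in $\cS(K)_1$ are forced into the \emph{same} pattern (contradiction), and then treats separately the case where one pattern's interface elements all lie in $\cS(K)_2\setminus\cS(K)_1$ using Rule 4---but your outline captures the same content.
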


\begin{proof} By the step $1^\circ$ of the algorithm, any two consecutive elements of type $\cT_1$ are merged. Thus in the remaining elements of the chain, the type $\cT_1$ elements must be interlaced if they are present. The step $2^\circ$ merges all remaining elements in the chain which consists of a subchain of length 3 of the type $1\to 2\to 1$. The remaining type $\cT_1$ elements in the chain of length $3$ can appear only in the form $2\to 1\to 2$ which are merged by the step $3^\circ$. {\rev{Thus the first three steps of the algorithm merge all elements in $\cT_1$. Here we have used the assumption that the first and last elements in $\mathfrak{C}$ both belong to $\cT_2$ or the first and last elements are the same interface elements.}} The left type $\cT_2$ elements are treated in the step $4^\circ$ of the algorithm. The elements in $\cT_3$ are all large elements and thus need not be merged.
This shows that Algorithm 6 will merge all interface elements in the chain to output a locally induced mesh ${\rm Induced}(\mathfrak{C})$ which consists of the large elements of $\mathfrak{C}$ and the macro-elements containing all small elements of the chain $\mathfrak{C}$. By Lemmas \ref{lem:3.1}-\ref{lem:3.2}, the elements in ${\rm Induced}(\mathfrak{C})$ are all large elements since $\delta_0\in (0,1/5\,]$.

It remains to show that the non-interface elements of the mesh $\cT$ will not be used twice in the merging Algorithm 6 to guarantee the success of the algorithm. Let $K\in\cT\backslash\cT^\Gamma$. We {first} assume $K$ is merged with interface elements in $\cS(K)_1$ which belong to two patterns $\cP_1$ and $\cP_2$. {By Lemma \ref{lem:3.3}, the elements in $(\cP_1\cup\cP_2)\cap\cS(K)_1$ must be connected}. Assume $D_1\in\cP_1$, $D_2\in\cP_2$ are connected, then one of $D_1$ and $D_2$ must be diagonal to $K$. Without loss of generality, we assume $D_1=D(K)$. If $D_1\in\cT_2$, then by Remark \ref{rem1} the element $D'$ neighboring $K$ and $D_1$ must be in $\cT_1$ so that $K,D_1',D_1$ form a pattern of type 1. Thus by Rule 2 of the admissible chain, $D_2$, as an interface element, cannot be neighboring $D_1$, see Fig.\ref{fig:SK1} (top left). This is a contradiction. Therefore, $D_1$ can only be of type $\cT_1$. There are three possibilities illustrated in Fig.\ref{fig:SK1}.

(1) In the case of Fig.\ref{fig:SK1} (top right), Rule 2 implies $D_2$ must be in $\cT_2$. By Remark \ref{rem1}, $K,D_1$, and $D_2$ form a pattern of type 1, which contradicts to the assumption that $D_1,D_2$ belong to different patterns.

(2) In the case of Fig.\ref{fig:SK1} (bottom left), Rule 2 implies $D_2$ cannot be neighboring $D_1$.

(3) {\rev{ In the case of Fig.\ref{fig:SK1} (bottom middle), $D_1$ has only one common vertex with $K$. By Remark \ref{rem2} the neighboring elements of $D_1,D_1',D_1''$ both must be of type $\cT_1$ or $\cT_2$ and $K,D_1,D_1',D_1''$ form a pattern of type 1 or 2. If $K,D_1,D_1',D_1''$ form a pattern of type $1$, then this case belongs to (1). If $K,D_1,D_1',D_1''$ form a pattern of type 2, $D_2$ must be equal to one of $D_1'$ or $D_1''$, which contradicts that $D_1,D_2$ are in different patterns.

\begin{figure}[h]
\centering
%
\includegraphics[width=0.8\textwidth]{./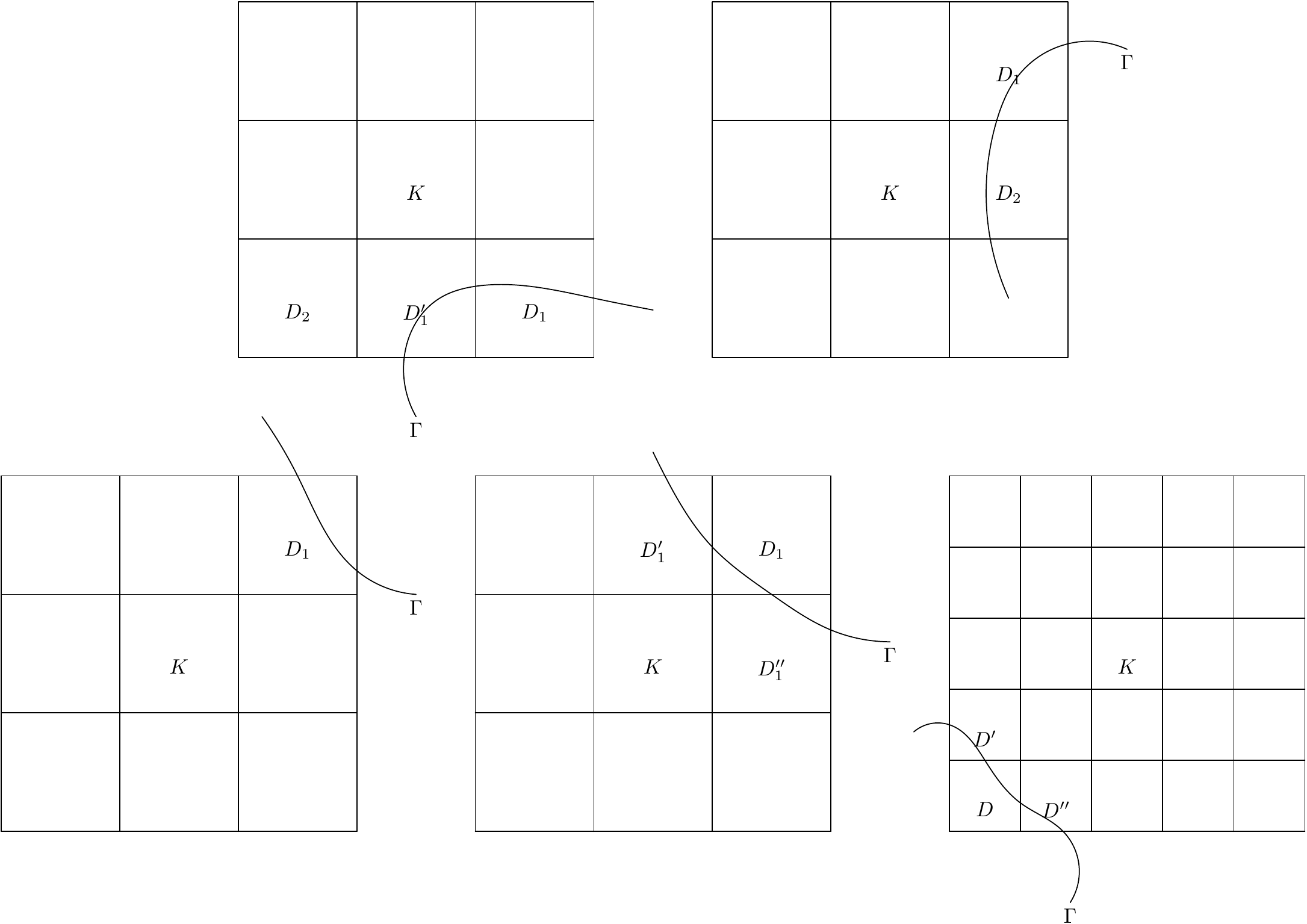}
 \caption{The element $K$ and $D_1\in\cS(K)_1$ is a type $\cT_2$ element (top left). The element $K$ and $D_1\in\cS(K)_1$ is a type $\cT_1$ element (top right, bottom left, and bottom middle). The element $K$ and $D,D',D''$ in $\cS(K)_2\backslash\cS(K)_1$ (bottom right).}\label{fig:SK1}
\end{figure}

In conclusion, $K$ cannot be merged with two interface elements in $\cS(K)_1$ belonging to different patterns.}} {It remains to show that $K$ cannot be merged with different interface elements in $\mathcal{S}(K)_2$ belonging to two patterns $\cP_1,\cP_2$ of which one pattern, e.g., $\cP_1$, consists of only interface elements in $\mathcal{S}(K)_2\backslash\mathcal{S}(K)_1$. By the construction of the patterns in \S 3.2, $\cP_1$ must be a pattern of type 2 and all interface elements $D,D',D''$ in $\cP_1$ are in the second layer of elements surrounding $K$, see Fig.\ref{fig:SK1} (bottom right). In this case, we know by the Rule 4 of the admissible chain that $K$ cannot be merged with elements in $\cS(K)_2$ other than $D,D',D''$, that is, $K$ cannot be merged with interface elements belonging to the second pattern $\cP_2$. This completes the proof.}
\end{proof}


To conclude this section, we show that the merging Algorithm 6 leads to a reliable algorithm to automatically construct a body-fitted shape regular mesh for arbitrarily shaped smooth interface. We start from a conforming uniform mesh $\cT_0$ of the domain $\Om$. We refine the interface elements of $\cT_0$ by quad refinements and their surrounding elements to generate a Cartesian mesh $\cT$ with hanging nodes such that all interface elements of $\cT$ form an admissible chain $\mathfrak{C}$. This is possible because the interface $\Ga$ is $C^2$-smooth. Now we use Algorithm 6 to obtain an induced mesh $\cM={\rm Induced}(\mathfrak{C})$. Since each interface element $K\in\cM^\Ga$ is a large element, $K_i^h$, $i=1,2$, is strongly shape regular in the sense that it is the union of shape regular triangles which we denote as $T_K^{ij}$, $1\le j\le m_K$. Then the mesh
\ben
\widetilde{\cM}=\{T_K^{ij}: i=1,2,\ j=1\cdots, m_K,K\in\cM^\Ga\}\cup\{K:K\in\cM\backslash\cM^\Ga\}
\een
is a triangular-rectangular mixed finite element mesh of the domain $\Om$. $\{T_K^{ij}: i=1,2,\ j=1\cdots, m_K,K\in\cM^\Ga\}$ is a body-fitted shape regular triangular mesh that covers the interface and $\{K:K\in\cM\backslash\cM^\Ga\}$ consists of a rectangular mesh whose elements are similar to the elements of the initial mesh $\cT_0$. Fig.\ref{fig:new} shows a mixed mesh constructed from the unfitted finite element mesh in Fig.\ref{fig:illustration}(e).

\begin{figure}
\begin{center}
 \includegraphics[width=0.4\textwidth]{./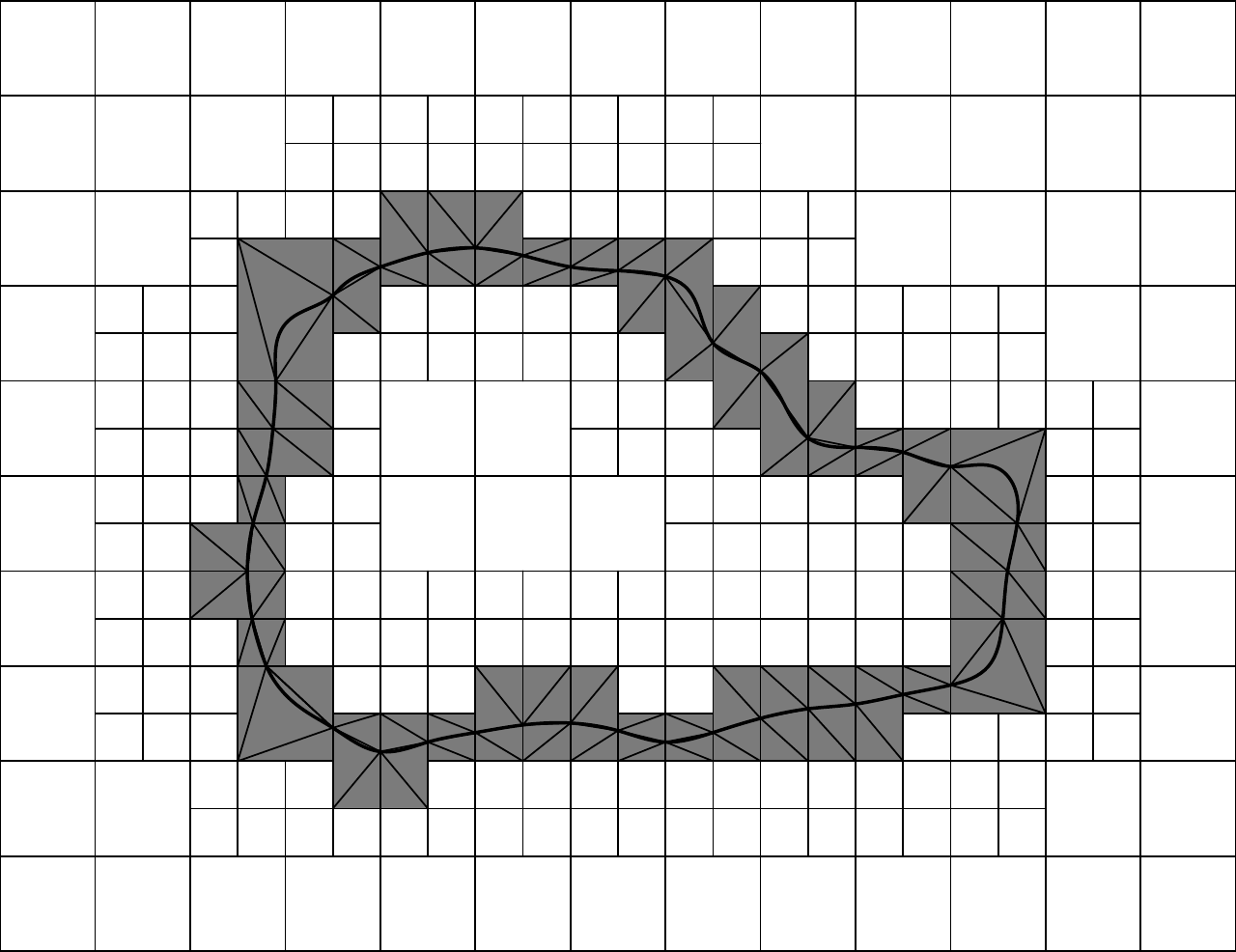}
 \end{center}
\caption{Illustration of a mixed triangular-rectangular body-fitted shape regular finite element mesh.}\label{fig:new}
\end{figure}

\section{The condition number of the stiffness matrix}

In this section we study the condition number of the stiffness matrix of the unfitted finite element method
defined in \eqref{a2}. Since we allow the Cartesian mesh $\cT$ having hanging nodes, which is a nonconforming mesh in the classical sense, we recall an important concept of the $K$-mesh in Babu\v{s}ka and Miller \cite{Babuska87a}. It is
introduced to control the undesirable excessive local refinements so that the local mesh sizes around each vertex of the elements are comparable. This concept is further developed in Bonito and Nochetto \cite{Bonito} as the control of the local level of incompatibility of the nonconforming meshes.

Let $\mathcal{N}^0$ be the set of conforming nodes of the mesh $\cT$. A conforming node of $\cT$ is a vertex of the elements in $\cT$ which either locates on the boundary $\pa\Om$ or is shared by the four elements to which it belongs. For each conforming node $P$, we define $\psi_P\in\mathbb{X}_1(\cT)\cap H^1(\Om)$, which is bilinear in each element and satisfies $\psi_P(Q)=\delta_{PQ}$ for any $Q\in\mathcal{N}^0$. Here $\delta_{PQ}$ is the Kronecker delta. It is proved in \cite{Babuska87a} that $\{\psi_P:P\in\mathcal{N}^0\}$ consists of a basis of $\X_1(\cT)\cap H^1(\Om)$ and satisfies the property of the partition of unity
$\sum_{P\in\mathcal{N}^0}\psi_P=1$.
In the rest of the paper, we impose the following assumption on the finite element mesh $\cT$ which is called the $K$-mesh in \cite{Babuska87a}.

\medskip
{\bf Assumption (H3)} There exists a constant $C>0$ uniform on the level of discretization of $\cT$ such that for any conforming node $P\in\mathcal{N}^0$,
\ben
{\rm diam}({\rm supp}(\psi_P))\le C\min_{K\in\cT_P}h_K,
\een
where $\cT_P:=\{K\in\cT,\,\,K\subset{\rm supp}(\psi_P)\}$.
\medskip

One can find further properties of $K$-meshes in \cite{Babuska87a}. We refer to \cite[\S 6]{Bonito} for a refinement algorithm to enforce the assumption (H3) in practical computations.

The following lemma on the continuous approximation of discontinuous piecewise polynomials on $K$-meshes is proved in \cite[Lemma 3.2]{CLX}.

\begin{lem}\label{lem:2.2}
Let $\mathbb{V}_P(\cT)=\Pi_{K\in\cT} Q_p(K)$. There exists an interpolation operator $\pi_h:\V_p(\cT)\to\mathbb{V}_p(\cT)\cap H^1(\Om)$ such that for any $v\in\V_p(\cT)$,
\ben
& &\|v-\pi_h v\|_{L^2(K)}\le C\|p^{-1}h^{1/2}\lj v\rj\|_{L^2(\sigma(K))},\\
&&\|\na(v-\pi_h v)\|_{L^2(K)}\le C\|ph^{-1/2}\lj v\rj\|_{L^2(\sigma (K))},
\een
where $\sigma(K)=\{e\in\cE^{\rm side}:e\subset\widetilde\omega(K)\}$, $\widetilde\omega(K)$ is a set of elements including $K$ such that ${\rm diam}(\widetilde\omega (K))\le Ch_K$. The constant $C$ is independent of $h_K,p$. Moreover, $\pi_hv\in H^1_0(\Om)$ if $v=0$ on $\pa\Om$.
\end{lem}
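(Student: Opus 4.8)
The plan is to realize $\pi_h$ as an averaging (Oswald--Karakashian--Pascal type) operator adapted to the $K$-mesh $\cT$, and to keep the $p$-dependence explicit by working throughout in the tensor-product Gauss--Lobatto nodal representation of $Q_p$. First I would fix on each $K\in\cT$ the grid of $(p+1)^2$ Gauss--Lobatto nodes with the associated Lagrange basis. The continuous subspace $\V_p(\cT)\cap H^1(\Om)$ then carries a nodal basis indexed by the \emph{conforming nodes} of $\cT$, extending the vertex case $\mathcal{N}^0$ (and the functions $\psi_P$) to edge and interior nodes of degree $p$; on an edge carrying a hanging node the edge and endpoint degrees of freedom of the fine side are slaved to the unique degree-$p$ trace coming from the coarse side. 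For $v\in\V_p(\cT)$ I define the linear map $\pi_h v$ by prescribing its value at every conforming node $\nu$: if $\nu$ is interior to an element $K$, set $(\pi_h v)(\nu)=v|_K(\nu)$; if $\nu$ lies on the skeleton, set $(\pi_h v)(\nu)$ to be the average of $v|_{K'}(\nu)$ over the elements $K'$ whose closure contains $\nu$ (on a slaved edge the value is inherited from the coarse element, so that $\pi_hv$ is globally continuous); if $\nu\in\pa\Om$, set $(\pi_h v)(\nu)=0$, which yields $\pi_h v\in H^1_0(\Om)$ whenever $v$ vanishes on $\pa\Om$.

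Next I would establish the key $p$-explicit local inequality on the reference square $\hat K=(-1,1)^2$: if $w\in Q_p(\hat K)$ vanishes at \emph{all interior} Gauss--Lobatto nodes, then $\|w\|_{L^2(\hat K)}\le C p^{-1}\|w\|_{L^2(\pa\hat K)}$ with $C$ independent of $p$. This follows from the spectral equivalence, uniform in $p$, between the $Q_p$ mass matrix and its Gauss--Lobatto mass-lumped diagonal (tensorized from the one-dimensional fact, together with the boundary weight being $2/(p(p+1))$) and the corresponding equivalence on the edges. Scaling to $K$ gives $\|w\|_{L^2(K)}\le C p^{-1}h_K^{1/2}\|w\|_{L^2(\pa K)}$. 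Applying this to $w=(v-\pi_h v)|_K$, which is a genuine $Q_p$ polynomial on $K$ vanishing at the interior nodes of $K$ by construction, reduces matters to estimating $\|(v-\pi_h v)|_e\|_{L^2(e)}$ for each edge $e$ of $K$. On such an edge the nodal values of $v-\pi_h v$ are averages of differences $v|_K(\nu)-v|_{K'}(\nu)$; invoking Assumption (H3), any two elements sharing a node are joined by a chain of uniformly bounded length of edge-adjacent elements of comparable size lying in a neighborhood $\widetilde\omega(K)$ with $\mathrm{diam}(\widetilde\omega(K))\le Ch_K$, so each such difference is a bounded sum of values $\jump{v}(\nu)$ along edges of $\sigma(K)$. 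Using once more the one-dimensional mass-lumping equivalence for the degree-$p$ polynomials $\jump{v}|_e$ gives $\|(v-\pi_h v)|_e\|_{L^2(e)}\le C\big(\sum_{e'\in\sigma(K)}\|\jump{v}\|_{L^2(e')}^2\big)^{1/2}$, and since $h_{e'}\sim h_K$ on $\sigma(K)$ by (H3) this combines to $\|v-\pi_h v\|_{L^2(K)}\le C\|p^{-1}h^{1/2}\jump{v}\|_{L^2(\sigma(K))}$. The gradient bound then follows at once from the standard $Q_p$ inverse inequality $\|\na w\|_{L^2(K)}\le Cp^2h_K^{-1}\|w\|_{L^2(K)}$ applied to $w=v-\pi_h v$ on $K$, which turns the weight $p^{-1}h^{1/2}$ into $ph^{-1/2}$.

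The main obstacle is the bookkeeping forced by the hanging nodes: one must set up the constrained conforming $Q_p$ space on the $K$-mesh correctly (which degrees of freedom are slaved, and to which coarse element), check that $\pi_h v$ is genuinely continuous, and — most delicately — show that every nodal-value difference entering $v-\pi_h v$ on a given element can be expanded as a \emph{uniformly bounded} number of edge jumps inside a neighborhood of diameter $\lesssim h_K$. This is precisely where Assumption (H3) is used and where the neighborhood $\widetilde\omega(K)$ and the edge set $\sigma(K)$ are determined. By contrast, the $p$-explicit ingredients — the Gauss--Lobatto mass-lumping equivalences and the Markov-type inverse inequality — are standard and account cleanly for the weights $p^{-1}h^{1/2}$ and $ph^{-1/2}$ in the two estimates.
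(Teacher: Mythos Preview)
The paper does not actually prove this lemma; it is quoted verbatim from \cite[Lemma~3.2]{CLX} with no argument given here. Your Oswald/Karakashian--Pascal averaging at the Gauss--Lobatto nodes is precisely the standard construction for such $hp$-explicit conforming reconstructions on $K$-meshes and is almost certainly what the cited reference does; the $p$-explicit ingredients you isolate (the GL mass-lumping equivalence, the boundary quadrature weight $2/(p(p+1))$, and the Markov inverse inequality) account correctly for the weights $p^{-1}h^{1/2}$ and $ph^{-1/2}$, and you have correctly identified the hanging-node bookkeeping under Assumption~(H3) as the genuine technical content.

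One small slip to fix: you set $(\pi_h v)(\nu)=0$ at every boundary node unconditionally. That would force $\pi_h v\in H^1_0(\Omega)$ for \emph{all} $v$, and then the approximation estimate would fail for $v\not\equiv 0$ on $\partial\Omega$, because $\sigma(K)$ contains only interior sides $e\in\cE^{\rm side}$ and cannot see the boundary trace. The correct definition keeps the boundary nodal values of $v$ (there is only one element on each boundary side, so no averaging is needed); then $\pi_h v\in H^1_0(\Omega)$ follows automatically from $v|_{\partial\Omega}=0$, matching the ``Moreover'' clause of the statement.
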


Since the induced mesh $\cM={\rm Induced}(\cT)$ is obtained by merging some of the elements of $\cT$, $\V_p(\cM)\subset\V_p(\cT)$. Thus Lemma \ref{lem:2.2} is also valid for any functions $v\in\V_p(\cM)$. We have the following discrete Poincar\'e inequality.

\begin{lem}\label{lem:2.3}
For any $v\in\X_p(\cM)$, we have $\|v\|_{L^2(\Om)}\le C\|v\|_{\rm DG}$, where $C>0$ is a constant independent of the mesh sizes, $p$, and the interface deviations $\eta_K$ for all $K\in\cM^\Ga$.
\end{lem}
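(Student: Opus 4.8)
The plan is to produce a globally $H^1(\Om)$‑conforming companion $w$ of $v$, to estimate $\|v-w\|_{L^2(\Om)}$ and $\|\na w\|_{L^2(\Om)}$ (plus a boundary trace) by $\|v\|_{\rm DG}$, and then to invoke the Poincar\'e--Friedrichs inequality on the fixed Lipschitz domain $\Om$. The first difficulty is that a function $v\in\X_p(\cM)$ is double‑valued on the interface elements: on the Cartesian mesh $\cM$ there is no single‑valued representative of $v$ all of whose element jumps are controlled by $\|v\|_{\rm DG}$, since forcing one polynomial on $K\in\cM^\Ga$ produces a mismatch $v_1-v_2$ on a \emph{side} of $K$, and the interface penalties — which live on the curve $\Ga_K$ — cannot bound a polynomial on a side (a polynomial can vanish together with its tangential derivative on $\Ga_K$ yet be large on $\pa K$). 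I would therefore work on the triangular--rectangular mesh $\widetilde\cM$ of $\Om$ obtained, as in Section \ref{sec_algo_1}, by splitting each $K\in\cM^\Ga$ along the chord $\Ga_K^h$ into the strongly shape regular polygons $K_1^h,K_2^h$ and triangulating each $K_i^h$ into the shape regular triangles $T_K^{ij}$ (this is possible because every $K\in\cM$ is a large element).

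On $\widetilde\cM$ the function $v$ \emph{is} single‑valued: it equals $v_i$ on every triangle of $K_i^h$ and equals $v_j$ on every rectangle in $\Om_j$. Since $\widetilde\cM$ is shape regular and inherits the $K$‑mesh property from (H3), the construction behind Lemma \ref{lem:2.2} (the $p$‑explicit Oswald/averaging operator, cf.\ \cite{Babuska87b,Babuska87a} and \cite{CLX}) applies verbatim and yields $w:=\pi_h v\in H^1(\Om)$, piecewise in $Q_p$ on $\widetilde\cM$, with $\|v-w\|_{L^2(\omega)}\le C\|p^{-1}h^{1/2}\lj v\rj\|_{L^2(\sigma(\omega))}$ and $\|\na(v-w)\|_{L^2(\omega)}\le C\|ph^{-1/2}\lj v\rj\|_{L^2(\sigma(\omega))}$ for each $\omega\in\widetilde\cM$, where $\sigma(\omega)$ is a bounded patch of interior sides of $\widetilde\cM$. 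The interior sides of $\widetilde\cM$ are of three kinds: the subtriangle sides internal to an interface element, across which $v$ has \emph{zero} jump (since $v_i$ is a single polynomial on $K$); the sides lying inside one $\Om_i$, across which the jump is $\lj v_i\rj$ and which belong to $\cE_i^{\rm side}$; and the chord pieces $\Ga_K^h$, across which the jump is $(v_1-v_2)|_{\Ga_K^h}$. For the first two kinds one has $\|ph^{-1/2}\lj v_i\rj\|,\|p^{-1}h^{1/2}\lj v_i\rj\|\le C\|\al^{1/2}\lj v_i\rj\|\le C\|v\|_{\rm DG}$, using that $\al^{1/2}\simeq ph^{-1/2}$ up to the bounded factors $\Theta_e^{1/2},a_e^{1/2}$ (here (H2) gives $\Theta_e\le C$). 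So the whole argument is reduced to bounding the chord contributions by $\|v\|_{\rm DG}$.

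This is the heart of the proof, and the main obstacle: two ``geometric mismatch'' quantities that are not literally present in $\|v\|_{\rm DG}$ must be absorbed. The first is the comparison of $L^2$‑norms of polynomials on the true curved pieces $K_i=K\cap\Om_i$, and on thin strips of width $\delta_K=\dist_{\rm H}(\Ga_K,\Ga_K^h)$ straddling $\Ga_K^h$, with their norms on the polygons $K_i^h$. Here I would combine Lemma \ref{lem:new} with Lemma \ref{lem:2.0} applied to the triangulation $\{T_K^{ij}\}$: under (H2), $\eta_K\le 1/(p(p+1))$, so $\delta_K\le\eta_K h_K\lesssim h_K/p^2$ and all relevant domain‑inverse factors, of the form $\mathsf{T}\big((1+3\eta_K)/(1-\eta_K)\big)^{O(p)}$ or $(1+O(\delta_K/h_K))^{O(p)}$, are bounded by an absolute constant, exactly as in the computation preceding Assumption (H2). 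This gives $\|v_i\|_{L^2(K_i)}\le C\|v_i\|_{L^2(K_i^h)}$, $\|\na v_i\|_{L^2(K_i^h)}\le C\|\na v_i\|_{L^2(K_i)}$, and $\|\na v_i\|_{L^2(S)}\le C\|\na v_i\|_{L^2(K_i)}$ for any $\delta_K$‑strip $S$ around $\Ga_K^h$. The second is the transfer of the jump from the true interface to the chord: writing $\phi=v_1-v_2$ and parametrising both $\Ga_K$ and $\Ga_K^h$ over the chord, $|\phi(\gamma^h(t))-\phi(\gamma(t))|\le\int_0^{\delta_K}|\na\phi|$, whence $\|\phi\|_{L^2(\Ga_K^h)}^2\le 2\|\phi\|_{L^2(\Ga_K)}^2+C\delta_K\|\na\phi\|_{L^2(S)}^2\le 2\|\lj v\rj\|_{L^2(\Ga_K)}^2+C\delta_K(\|\na v_1\|_{L^2(K_1)}^2+\|\na v_2\|_{L^2(K_2)}^2)$ by the first estimate (the penalty $\|p^{-1}h^{1/2}\na_T\lj v\rj\|_{\cE^\Ga}$ is available to absorb any curve--chord Jacobian correction). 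Multiplying by $p^2h_K^{-1}$ (the square of the companion weight) and using $\al|_{\Ga_K}\gtrsim p^2h_K^{-1}$ together with $p^2h_K^{-1}\delta_K\le p^2\eta_K\le 1$, one obtains $\|ph_K^{-1/2}(v_1-v_2)\|_{L^2(\Ga_K^h)}^2\le C\|\al^{1/2}\lj v\rj\|_{L^2(\Ga_K)}^2+C(\|\na v_1\|_{L^2(K_1)}^2+\|\na v_2\|_{L^2(K_2)}^2)$, and similarly for the weight $p^{-1}h_K^{1/2}$; summing over $K\in\cM^\Ga$ both are $\le C\|v\|_{\rm DG}^2$.

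Assembling: the companion estimates now give $\|v-w\|_{L^2(\Om)}\le C\|v\|_{\rm DG}$ and $\|\na(v-w)\|_{L^2(\Om)}\le C\|v\|_{\rm DG}$; since the piecewise gradient of $v$ on $\widetilde\cM$ is bounded by $C\sum_i\|\na v_i\|_{L^2(\Om_i)}\le C\|v\|_{\rm DG}$ (again by the norm comparison), also $\|\na w\|_{L^2(\Om)}\le C\|v\|_{\rm DG}$. For the boundary, $\al\gtrsim h^{-1}$ makes $\|v\|_{L^2(\pa\Om)}\le C\|\al^{1/2}\lj v\rj\|_{\cE^{\rm bdy}}\le C\|v\|_{\rm DG}$, and a multiplicative trace inequality combined with the companion estimates gives $\|w-v\|_{L^2(\pa\Om)}\le C\|v\|_{\rm DG}$, so $\|w\|_{L^2(\pa\Om)}\le C\|v\|_{\rm DG}$. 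The Poincar\'e--Friedrichs inequality on $\Om$ then yields $\|w\|_{L^2(\Om)}\le C(\|\na w\|_{L^2(\Om)}+\|w\|_{L^2(\pa\Om)})\le C\|v\|_{\rm DG}$, and finally $\|v\|_{L^2(\Om)}\le C\|v\|_{L^2(\widetilde\cM)}\le C(\|v-w\|_{L^2(\Om)}+\|w\|_{L^2(\Om)})\le C\|v\|_{\rm DG}$, using the norm comparison once more to pass from $\Om_i$ to the $K_i^h$‑pieces. I expect the genuinely delicate point to be the curve‑to‑chord transfer together with the thin‑strip inverse estimates: it is precisely there that the extra interface penalty and Assumption (H2) (forcing $\delta_K\lesssim h_K/p^2$) are indispensable, as otherwise the chord jump $(v_1-v_2)|_{\Ga_K^h}$ cannot be absorbed into $\|v\|_{\rm DG}$ and no $p$‑robust conforming companion of $v$ exists.
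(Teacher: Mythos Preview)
Your approach is sound but genuinely different from the paper's. The paper does \emph{not} refine the mesh along the chords; instead it applies Lemma~\ref{lem:2.2} separately on each Cartesian submesh $\cM_i$ to produce $\pi_hv_i\in H^1(\Om_i^h)$, and then removes the residual interface jump $\lj\pi_hv\rj_\Ga$ and the boundary trace $\pi_hv_2|_{\pa\Om}$ by solving two auxiliary harmonic problems for $w_1,w_2$ on $\Om_1,\Om_2$, so that $\pi_h^cv=(\pi_hv_1-w_1)\chi_{\Om_1}+(\pi_hv_2-w_2)\chi_{\Om_2}\in H_0^1(\Om)$ and the standard Poincar\'e inequality applies. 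The crux of the paper's argument is then an $H^{1/2}(\Ga)$ bound for $\lj\pi_hv\rj$ (obtained via Faermann's localization of the Slobodeckij seminorm), and it is precisely there that the tangential penalty $\|p^{-1}h^{1/2}\na_T\lj v\rj\|_{\cE^\Ga}$ enters. Your route replaces the harmonic lifting and the $H^{1/2}$ estimate by a direct curve--to--chord transfer of $v_1-v_2$, controlled by $\|\al^{1/2}\lj v\rj\|_{\Ga_K}$ and the volumetric gradient through the factor $p^2h_K^{-1}\delta_K\le p^2\eta_K\le 1$ coming from~(H2); this is more elementary and, as your computation shows, makes the tangential penalty essentially superfluous. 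The paper's route, in return, never leaves the Cartesian $K$-mesh, so Lemma~\ref{lem:2.2} can be quoted verbatim without any extension.

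One genuine caveat: your claim that the construction behind Lemma~\ref{lem:2.2} ``applies verbatim'' to the mixed mesh $\widetilde\cM$ is too quick. That lemma (from \cite{CLX}) is built on the conforming bilinear partition of unity $\{\psi_P\}$ of a Cartesian $K$-mesh, and on $\widetilde\cM$ you face triangles carrying $Q_p$ (whose trace on a non-axis-aligned edge lies in $P_{2p}$, not $P_p$) together with hanging nodes at triangle--rectangle junctions; the resulting conforming subspace and its nodal description are not the ones underlying Lemma~\ref{lem:2.2}. A $p$-robust averaging operator in this setting can be written down, but it is not immediate from the paper's tools; you should either supply that construction explicitly or, more economically, keep each $K_i^h$ as a \emph{single} polygonal element of $\widetilde\cM$ (avoiding the internal triangulation), so that the only new side type is the chord $\Ga_K^h$ and a polytopal Oswald operator with the stated $p$-weights suffices.
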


\begin{proof} Let $v=v_1\chi_{\Om_1}+v_2\chi_{\Om_2}\in\X_p(\cM)$. By Lemma \ref{lem:2.2}, for $v_i\in\V_p(\cM_i)$, $i=1,2$, there exists $\pi_h v_i\in\V_P(\cM_i)\cap H^1(\Om_i^h)$ such that
\beq
\|v_i-\pi_h v_i\|_{\cM_i}\le C\|p^{-1}h^{1/2}\lj v_i\rj\|_{{\rev{\cE_i^{side}}}},\ \ \|\na(v_i-\pi_h v_i)\|_{\cM_i}\le C\|ph^{-1/2}\lj v_i\rj\|_{{\rev{\cE_i^{side}}}}. \label{a4}
\eeq
Recall that we have assumed $\bar\Om_1\subset\Om$. Let $w_i\in H^1(\Om_i)$, $i=1,2$, satisfy
\ben
& &-\Delta w_1=0\ \ \mbox{in }\Om_1, \ \ w_1=\lj\pi_h v\rj_\Ga\ \ \mbox{on }\Ga=\pa\Om_1,\\
& &-\Delta w_2=0\ \ \mbox{in }\Om_2, \ \ w_2=0\ \ \mbox{on }\Ga,\ \ w_2=\pi_h v_2\ \ \mbox{on }\pa\Om.
\een
Then $w_i\in H^1(\Om_i)$ satisfies $\|w_1\|_{H^1(\Om_1)}\le C\|\lj\pi_h v\rj\|_{H^{1/2}(\Ga)}$, $\|w_2\|_{H^1(\Om_2)}\le C\|\pi_h v\|_{H^{1/2}(\pa\Om)}$. From the proof of \cite[Lemma 3.4]{CLX} we know that
\beq
\|\lj\pi_h v\rj\|_{H^{1/2}(\Ga)}^2\le C(\|ph^{-1/2}\lj v\rj\|_{\cE^\Ga\cup\cE^{\rm side}_1\cup\cE^{\rm side}_2}+\|p^{-1}h^{1/2}\na_T\lj v\rj\|_{\cE^\Ga}).\label{a5}
\eeq
Now we use a similare argument to bound $\|\pi_h v\|_{H^{1/2}(\pa\Om)}$. By the the localization lemma of the $H^{1/2}$ semi-norm in Faermann \cite[Lemm 2.3]{Faermann} and the Gagliardo-Nirenberg type estimate for the $H^{1/2}$ semi-norm, we obtain as in \cite[(3.13)]{CLX} that
\be
&&\|\pi_h v_2\|_{H^{1/2}(\pa\Om)}^2\le C\sum_{K\in\cM_2}(\|\pi_h v_2\|_{L^2(\Sigma_K)}\|\na_T(\pi_h v_2)\|_{L^2(\Sigma_K)}+h^{-1}_K\|\pi_h v_2\|_{L^2(\Sigma_K)}^2),\label{a6}
\ee
where $\Sigma_K=\pa K\cap\pa\Om$. By the $hp$-inverse estimate and Lemma \ref{lem:2.2}, we obtain
\ben
\|\na_T(\pi_h v_2)\|_{L^2(\Sigma_K)}&\le&\|\na_T v_2\|_{L^2(\Sigma_K)}+\|{\rev{\na_T}}(v_2-\pi_h v_2)\|_{L^2(\Sigma_K)}\\
&\le&Cp^2h_K^{-1}\|v_2\|_{L^2(\Sigma_K)}+Cph_K^{-1/2}\|\na(v_2-\pi_h v_2)\|_{L^2(K)}\\
&\le&Cp^2h_K^{-1}\|v_2\|_{L^2(\Sigma_K)}+Cph_K^{-1/2}\|ph^{-1/2}\lj v_2\rj\|_{L^2(\sigma(K))}.
\een
Similarly, one can prove $\|\pi_h v_2\|_{L^2(\Sigma_K)}\le\|v_2\|_{L^2(\Sigma_K)}+\|\lj v_2\rj\|_{L^2(\sigma(K))}$. Recall that $\lj v_2\rj=v_2$ on $\pa\Om$. This implies by \eqref{a6} that
\ben
\|\pi_h v_2\|_{H^{1/2}(\pa\Om)}\le C\|ph^{-1/2}\lj v_2\rj\|_{\cE^{\rm bdy}\cup\cE_2^{\rm side}}.
\een
Therefore, by combining with \eqref{a5} we have
\beq
\|w_1\|_{H^1(\Om_1)}+\|w_2\|_{H^1(\Om_2)}\le C\|v\|_{\rm DG}.\label{a7}
\eeq
Let $\pi_h^cv=(\pi_h v_1-w_1)\chi_{\Om_1}+(\pi_h v_2-w_2)\chi_{\Om_2}$. Then $\pi_h^cv\in H^1_0(\Om)$ and by using Poincar\'e inequality for $\pi_h^cv$, we have
\ben
\|v\|_{L^2(\Om)}&\le&\|v-\pi_h^c v\|_{L^2(\Om)}+\|\pi_h^c v\|_{L^2(\Om)}\\
&\le&\sum^2_{i=1}(\|v_i-\pi_h v_i\|_{\cM_i}+\|w_i\|_{L^2(\Om_i)})+C\|\na\pi_h^c v\|_{L^2(\Omega)}\\
&\le&{\rev{\sum^2_{i=1}(\|v_i-\pi_h v_i\|_{\cM_i}+\|w_i\|_{L^2(\Om_i)})+C(\|\na_h(\pi_h^cv-v)\|_{\cM}+\|\na_h v\|_{\cM}}})\\
&\le&C{\rev{\sum^2_{i=1}(\|v_i-\pi_h v_i\|_{H^1(\cM_i)}+\|w_i\|_{H^1(\Om_i)})+C\|\na_h v\|_\cM.}}
\een
Here for $i=1,2$, $\|w\|_{H^1(\cM_i)}^2=\|w\|_{\cM_i}^2+\|\na_h w\|_{\cM_i}^2\ \ \forall w\in H^1(\cM_i)$. This completes the proof by using \eqref{a4} and \eqref{a7}.
\end{proof}

Now we consider the condition number of the stiffness matrix.
We start by introducing the basis functions we use in each element. If $K\in\cM\backslash\cM^\Ga$ is not
an interface element, we will use a set of basis functions which are Lagrangian interpolation functions corresponding to Gauss-Lobatto points. We first recall some facts about spectral method and refer to Bernardi and Maday \cite{Bernardi} for the details.

Let $I=(-1,1)$ and $\{L_i\}^p_{i=0}$ the set of Legendre polynomials of $Q_p(I)$ which is the set of polynomials of degree $p$ in $I$. Let $\{l_i\}^p_{i=0}$ be the set of Lagrangian interpolation functions in $Q_p(I)$ corresponding to the Gauss-Lobatto points $\{\xi_i\}^p_{i=0}$ which are the zeros of $(1-\xi^2)L_p'(\xi)$ in $I$.

Now let $\hat K=I\times I$ and $\{(\xi_i,\xi_j):0\le i,j\le p\}$ be the Gauss-Lobatto grid of $\hat K$. Any function $\hat v\in Q_p(\hat K)$ can be written as $\hat v=\sum^p_{i,j=0}\hat v_{ij}l_i(\hat x_1)l_j(\hat x_2)$. The following important result is proved in Melenk \cite[Proposition 2.8, Theorem 4.1]{Melenk}.

\begin{lem}\label{lem:4.1}
There exists a constant $C$ independent of $p$ such that for any function $\hat v=\sum^p_{i,j=0}\hat v_{ij}l_i(\hat x_1)l_j(\hat x_2)$, there holds
\begin{align*}
& C^{-1}p^{-2}\sum^p_{i,j=0}\hat v^2_{ij}\le\|\hat v\|_{H^1(\hat K)}^2\le Cp\sum^p_{i,j=0}\hat v_{ij}^2,
\end{align*}
and
\begin{align*}
& \|\hat v\|^2_{L^2(\pa \hat K)}\le Cp^{-1}\left(\sum_{i=0,p}\sum^p_{j=0}\hat v_{ij}^2+\sum_{j=0,p}\sum_{i=0}^p\hat v_{ij}^2\right).
\end{align*}
\end{lem}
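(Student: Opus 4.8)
The plan is to reduce the three estimates in Lemma \ref{lem:4.1} to the
one-dimensional situation on the interval $I=(-1,1)$, where the behavior of the
Gauss--Lobatto Lagrangian basis $\{l_i\}_{i=0}^p$ is governed by the
Gauss--Lobatto quadrature weights $\{w_i\}_{i=0}^p$. The key one-dimensional facts,
all available from the spectral-element literature (Bernardi and Maday \cite{Bernardi},
Melenk \cite{Melenk}), are: (i) the weights satisfy $C^{-1}p^{-2}\le w_i\le
Cp^{-1}$ for the interior nodes and $w_0=w_p=2/(p(p+1))$, so in particular
$w_i\simeq p^{-1}$ away from the endpoints and $w_i\gtrsim p^{-2}$ everywhere;
(ii) Gauss--Lobatto quadrature is exact for polynomials of degree $\le 2p-1$, which
gives the discrete $L^2$ norm equivalence $\sum_{i=0}^p w_i g(\xi_i)^2 \le
\|g\|_{L^2(I)}^2 \le C\sum_{i=0}^p w_i g(\xi_i)^2$ for all $g\in Q_p(I)$ (the
upper bound uses that the square of a degree-$p$ polynomial has degree $2p$, so one
loses only a constant factor); and (iii) the inverse inequality
$\|g'\|_{L^2(I)}\le Cp^2\|g\|_{L^2(I)}$ for $g\in Q_p(I)$, together with the trace
bound $\|g\|_{L^\infty(I)}^2\le Cp\,\|g\|_{L^2(I)}^2$ (equivalently $|g(\pm1)|^2\le
Cp\|g\|_{L^2(I)}^2$). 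I would state these as the ingredients and cite
\cite{Bernardi, Melenk} rather than reprove them.

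First I would prove the $L^2$ mass-matrix equivalence in two dimensions: writing
$\hat v=\sum_{i,j}\hat v_{ij}l_i(\hat x_1)l_j(\hat x_2)$, tensorizing the
one-dimensional discrete norm equivalence gives
$\|\hat v\|_{L^2(\hat K)}^2 \simeq \sum_{i,j=0}^p w_i w_j \hat v_{ij}^2$, with
constants independent of $p$. Combined with $p^{-2}\lesssim w_i w_j \lesssim p^{-2}$
being false in general, I instead keep the weights explicit and only collapse them
at the end: $w_iw_j\gtrsim p^{-4}$ fails too — rather, the correct statement from
(i) is $p^{-2}\lesssim w_i$, hence $w_iw_j\gtrsim p^{-4}$, and $w_i\lesssim p^{-1}$,
hence $w_iw_j\lesssim p^{-2}$. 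Thus
$C^{-1}p^{-4}\sum\hat v_{ij}^2 \le \|\hat v\|_{L^2(\hat K)}^2 \le Cp^{-2}\sum
\hat v_{ij}^2$. Wait — the stated lemma has $p^{-2}$ on the lower side of the
$H^1$ bound, not $p^{-4}$; I would reconcile this by noting the $L^2$ lower bound
$p^{-4}\sum\hat v_{ij}^2\le\|\hat v\|_{L^2}^2$ is consistent because for the $H^1$
estimate one combines $\|\hat v\|_{L^2(\hat K)}^2\ge C^{-1}p^{-4}\sum\hat v_{ij}^2$
with the gradient term, and the interior-node weights actually give the sharper
$p^{-2}$; I would simply follow Melenk's argument and track the exponents honestly,
using that the endpoints contribute the worst weight $p^{-2}$ but there are only
$O(p)$ of them, so after summing against $\|\na\cdot\|_{L^2}$ contributions the
homogeneous bound $C^{-1}p^{-2}\sum\hat v_{ij}^2\le\|\hat v\|_{H^1(\hat K)}^2$ comes
out. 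The upper bound $\|\hat v\|_{H^1(\hat K)}^2\le Cp\sum\hat v_{ij}^2$ follows from
$\|\hat v\|_{L^2}^2\lesssim p^{-2}\sum\hat v_{ij}^2$ plus the inverse inequality
$\|\na\hat v\|_{L^2}^2\le Cp^4\|\hat v\|_{L^2}^2\le Cp^2\sum\hat v_{ij}^2$, and
$p^2\le Cp\cdot p$, so actually one gets $Cp^2$; the stated $Cp$ presumably uses a
finer tensor-product argument (the gradient hits only one factor), and I would
supply that: $\|\pa_{\hat x_1}\hat v\|_{L^2(\hat K)}^2 = \sum_j w_j \cdot
\|\sum_i\hat v_{ij}l_i'\|_{L^2(I)}^2$ and bound the inner one-dimensional quantity
using a diagonal-dominance/weight estimate for the one-dimensional stiffness matrix
$\int_I l_i'l_k'$, which is $O(p)$ in the appropriate norm.

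For the boundary estimate, I would again tensorize: $\pa\hat K$ consists of four
edges, on each of which $\hat v$ restricts to a one-dimensional polynomial whose
Gauss--Lobatto coefficients are a single slice $\{\hat v_{ij}\}$, e.g. on
$\{\hat x_1=1\}$ the trace is $\sum_j \hat v_{pj}l_j(\hat x_2)$ plus the
contributions $l_i(1)=\delta_{ip}$ automatically selecting the slice. Then the
one-dimensional discrete $L^2$ equivalence gives
$\|\hat v\|_{L^2(\{\hat x_1=1\})}^2\le C\sum_j w_j\hat v_{pj}^2\le Cp^{-1}\sum_j
\hat v_{pj}^2$, and summing the four edges produces exactly the claimed
$Cp^{-1}(\sum_{i=0,p}\sum_j\hat v_{ij}^2+\sum_{j=0,p}\sum_i\hat v_{ij}^2)$.

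The main obstacle is being careful with the powers of $p$: the endpoint
Gauss--Lobatto weights degrade like $p^{-2}$ while interior ones are $p^{-1}$, so a
naive uniform bound $w_i\simeq p^{-1}$ is wrong and would give the wrong exponents;
the sharp constants in Lemma \ref{lem:4.1} rely on counting that only $O(p)$ nodes
sit at the endpoints and on exploiting the tensor-product structure so the gradient
and trace act in only one coordinate direction. Since this is precisely the content
of \cite[Proposition 2.8, Theorem 4.1]{Melenk}, the honest route is to quote those
results; the above is the sketch of why they hold.
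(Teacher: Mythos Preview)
The paper does not prove this lemma; it states the result and cites Melenk \cite[Proposition~2.8, Theorem~4.1]{Melenk}. Your closing sentence --- that the honest route is to quote those results --- is therefore exactly the paper's approach.

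Your sketch has the right skeleton (tensorisation, the Gauss--Lobatto weight asymptotics $w_i\in[Cp^{-2},Cp^{-1}]$, the discrete $L^2$ equivalence $\|g\|_{L^2(I)}^2\simeq\sum_i w_i g(\xi_i)^2$), and your argument for the boundary estimate is correct: restriction to an edge picks out a single slice of coefficients because $l_i(\pm 1)=\delta_{i,0}$ or $\delta_{i,p}$, and the one-dimensional equivalence together with $w_j\le Cp^{-1}$ delivers the stated bound. The $H^1$ equivalence, however, is not closed in your write-up. You yourself flag that the crude product bound $w_iw_j\ge Cp^{-4}$ gives only a $p^{-4}$ lower bound for $\|\hat v\|_{L^2}^2$, not the claimed $p^{-2}$ for $\|\hat v\|_{H^1}^2$; and that the route via the $L^2$ inverse inequality $\|\na\hat v\|_{L^2}^2\le Cp^4\|\hat v\|_{L^2}^2$ overshoots to $Cp^2$ on the upper side. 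Getting the sharp exponents $p^{-2}$ and $p$ requires the precise eigenvalue bounds for the one-dimensional Gauss--Lobatto mass and stiffness matrices in the nodal basis, which is exactly what Melenk establishes and is not something one can read off from the weight asymptotics and the standard inverse inequality alone. So your instinct to defer to the citation matches the paper; the intermediate sketch, as written, does not stand on its own for the $H^1$ part.
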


For any $K\in\cM$, let $F_K:\hat K\to K$ be the one-to-one and surjective affine mapping. Denote $\phi_K^{ij}=\hat\phi_{ij}\circ F_K^{-1}$, where $\hat\phi_{ij}=l_i(\hat x_1)l_j(\hat x_2)$, $0\le i,j\le p$. For any $v\in Q_p(K), v=\sum^p_{i,j=0} v_K^{ij}\phi_K^{ij}$, we have by Lemma \ref{lem:4.1} and the standard scaling argument that
\begin{align}
&C^{-1}p^{-2}\|\bV_K\|_{\ell_2}^2\le\|\na v\|_{L^2(K)}^2+h_K^{-2}\|v\|_{L^2(K)}^2\le Cp\|\bV_K\|_{\ell_2}^2,\label{b7}\\
&\|v\|_{L^2(\pa K)}^2\le Cp^{-1}h_K\|\bV_K\|_{\ell_2}^2,\label{b8}
\end{align}
where $\bV_K=(\bv_0^T,\cdots,\bv^T_p)^T$, $\bv_i=(v_{i_0},\cdots,v_{i_p})^T$ is the coefficient vector corresponding to $v\in Q_p(K)$.

For the interface element $K\in\cM^\Ga$, we have $K_1^{h-\delta_K}\subset K_1$ and $K_2^{h-\delta_K}\subset K_2$. We also have $\hat K_1^{h-\delta_K}\subset \hat K_1$ and $\hat K_2^{h-\delta_K}\subset \hat K_2$,  where $\hat K_i^{h-\delta_K}=F_K^{-1}(K_i^{h-\delta_K})$, $\hat K_i=F_K^{-1}(K_i)$, $i=1,2$. Let $\{\hat\psi^j_{\hat K^h_i}\}^{(p+1)^2}_{j=1}$ the $L^2$-orthonormal basis of $Q_p(\hat K^{h-\delta_K}_i)$, that is, $(\hat\psi^j_{\hat K_i^h},\hat\psi^k_{\hat K_i^h})_{\hat K_i^{h-\delta_K}}=\delta_{kj}$. Denote by $\psi^j_{K_i^h}=p^{-3/2}(\hat\psi^j_{\hat K^h_i}\circ F_K^{-1})$. Then $\{\psi^j_{K_i^{h}}\}^{(p+1)^2}_{j=1}$ is an $L^2$-orthogonal basis of $Q_p(K_i^{h-\delta_K})$, that is,
\beq\label{b4}
(\psi^j_{K_i^h},\psi^k_{K_i^h})_{K_i^{h-\delta_K}}=p^{-3}\frac{|K|}{|\hat K|}\,\delta_{jk}.
\eeq
The scaling constant $p^{-3}$ in \eqref{b4} is important for us to balance the contribution of different basis functions used in interface and non-interface elements in the estimation of the condition number of the stiffness matrix. Now for any $v\in\X_p(\cM)$, $K\in\cM^\Ga$,
\beq\label{yy1}
v|_K=\sum^{(p+1)^2}_{j=1}(v_{K_1}^j\psi^j_{K_1^h}\chi_{K_1}+v^j_{K_2}\psi^j_{K_2^h}\chi_{K_2}):=v_1\chi_{K_1}+v_2\chi_{K_2}.
\eeq
Let $\bV_K=(v^1_{K_1},\cdots,v^{(p+1)^2}_{K_1},v^1_{K_2},\cdots,v_{K_2}^{(p+1)^2})^T$ the coefficient vector corresponding to $v$, then by \eqref{b4} we have
\beq\label{b5}
\|v_1\|_{L^2(K_1^{h-\delta_K})}^2+\|v_2\|_{L^2(K_2^{h-\delta_K})}^2=p^{-3}\frac{|K|}{|\hat K|}\,\|\bV_K\|_{\ell_2}^2.
\eeq
By Lemma \ref{lem:new} we obtain
\beq\label{b6}
Cp^{-3}h_K^2\|\bV_K\|_{\ell_2}^2\le \|v\|_{L^2(K)}^2\le C\Theta_{K}p^{-3}h_K^2\|\bV_K\|_{\ell_2}^2\ \ \ \forall K\in\cM^\Ga.
\eeq

Now, by the construction, any function $v\in\X_p(\cM)$ can be written as
\beq\label{c1}
v=\sum_{K\in\cM\backslash\cM^\Ga}\sum^p_{i,j=0}v_K^{ij}\phi^{ij}_K+\sum_{K\in\cM^\Ga}\sum^{(p+1)^2}_{j=1}(v_{K_1}^j\psi^j_{K_1^h}\chi_{K_1}+v^j_{K_2}\psi^j_{K_2^h}\chi_{K_2}).
\eeq
Let $N=\#\cM$ be the number of elements of the mesh $\cM$, $\{G_1,\cdots,G_{N}\}$ the elements of $\cM$, and $\bV_{G_i}$ the coefficient vector of $v|_{G_i}$, $i=1,\cdots,N$. We denote $\bV=(\bV_{G_1}^T,\cdots,\bV_{G_N}^T)^T$ the vector of coefficients of $v$. The dimension of the vector $\bV$ is $N_p=(p+1)^2N$. We write $\bV=\Phi(v)$, where $\Phi:\X_p(\cM)\to\R^{N_p}$ is the mapping between functions in $\X_p(\cM)$ and their coefficient vectors.

Let $\bV=\Phi(v),\bW=\Phi(w)\in\R^{N_p}$ for $v,w\in\X_p(\cM)$. Then the stiffness matrix $\mathbb{A}=(a_{ij})^{N_p}_{i,j=1}$ is defined by
\ben
 (\mathbb{A}\bV,\bW)_{\ell_2}=a_h(v,w).
\een

Recall that $\Theta=\max_{K\in\cM}\Theta_K$. The following theorem is the main result of this section.

\begin{thm}\label{thm:4.1}
Denote $N^\Ga=\#\cM^\Ga$ the number of elements of $\cM^\Ga$ and $M=\min(N-N^\Ga,N^\Ga)$. Then the following bound of the condition number of the stiffness matrix holds
\ben
\kappa(\mathbb{A})\le C\Theta^2(1+|\ln(h_{\min}^2M)|)\left(p^3(N-N^\Ga)+p^4N^\Ga\right),
\een
where $h_{\min}=\min_{K\in\cM}h_K$ and the constant $C>0$ is independent of the mesh sizes, $p$, and the interface deviations $\eta_K$ for all $K\in\cM^\Ga$.
\end{thm}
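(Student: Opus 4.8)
The plan is to use that $\mathbb{A}$ is symmetric and, by coercivity $a_h(v,v)\ge c_{\rm stab}\|v\|_{\rm DG}^2$ together with the fact that $\|v\|_{\rm DG}=0$ forces $v=0$ for $v\in\X_p(\cM)$, positive definite, so that $\kappa(\mathbb{A})=\lambda_{\max}(\mathbb{A})/\lambda_{\min}(\mathbb{A})$. I would then estimate the two extreme eigenvalues separately through the chain of equivalences $a_h(v,v)\asymp\|v\|_{\rm DG}^2$ and the relations between $\|v\|_{\rm DG}$ and $\|\bV\|_{\ell_2}$ recorded in \eqref{b7}--\eqref{b8} for non-interface elements and in \eqref{b6} for interface elements.

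For the largest eigenvalue I would bound $a_h(v,v)\le C\|v\|_{\rm DG}^2$ and then control each term of $\|v\|_{\rm DG}^2$ by $\|\bV\|_{\ell_2}^2$ element by element: on non-interface elements this is \eqref{b7}--\eqref{b8}; on interface elements one chains Lemma \ref{lem:2.1} (to pass from $\na_hv$ and boundary/interface traces to $\|\cdot\|_{L^2(K_i)}$) with \eqref{b6} (to pass from $\|\cdot\|_{L^2(K)}$ to $\|\bV_K\|_{\ell_2}$), using the definition \eqref{a34} of $\al$ and the fact that $\Theta_e$ on a side of an interface element is the $\Theta_K$ appearing in those inverse estimates. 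Since every step is local, this yields $\lambda_{\max}(\mathbb{A})\le C\Theta^2p$ with no dependence on $N$, once one tracks carefully how the $\Theta_e$ in the penalty \eqref{a34} is matched against the interface inverse estimates.

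The substance is the lower bound on $\lambda_{\min}(\mathbb{A})$, for which, using $a_h(v,v)\ge c_{\rm stab}\|v\|_{\rm DG}^2$, it suffices to prove
\[
\|\bV\|_{\ell_2}^2\le C\big(1+|\ln(h_{\min}^2M)|\big)\big(p^2(N-N^\Ga)+p^3N^\Ga\big)\|v\|_{\rm DG}^2 .
\]
By \eqref{b7} and \eqref{b6} one has $\|\bV_K\|_{\ell_2}^2\le Cp^2(\|\na v\|_{L^2(K)}^2+h_K^{-2}\|v\|_{L^2(K)}^2)$ for $K\notin\cM^\Ga$ and $\|\bV_K\|_{\ell_2}^2\le Cp^3h_K^{-2}\|v\|_{L^2(K)}^2$ for $K\in\cM^\Ga$, so after collecting the gradient contributions into $\|v\|_{\rm DG}^2$ the problem reduces to bounding $\sum_{K}h_K^{-2}\|v\|_{L^2(K)}^2$ (and, on each interface element, $h_K^{-2}(\|v_1\|_{L^2(K_1)}^2+\|v_2\|_{L^2(K_2)}^2)$, which is tractable because large elements are strongly shape regular). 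On a shape regular $K$ (resp. $K_i$) the local Poincar\'e inequality gives $h_K^{-2}\|v\|_{L^2(K)}^2\le C|\bar v_K|^2+C\|\na_hv\|_{L^2(K)}^2$, where $\bar v_K$ is the mean of $v$; the gradient terms go again into $\|v\|_{\rm DG}^2$, and $\sum_{K\notin\cM^\Ga}|\bar v_K|^2\le (N-N^\Ga)\max_K|\bar v_K|^2$, similarly for the interface means. Everything is then reduced to the scalar, $p$-independent estimate
\[
\max_{K\in\cM}|\bar v_K|^2\le C\big(1+|\ln(h_{\min}^2M)|\big)\|v\|_{\rm DG}^2 ,
\]
a two-dimensional logarithmic (discrete Sobolev) Poincar\'e inequality. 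To prove it I would split $v=\pi_hv+(v-\pi_hv)$ with $\pi_h$ from Lemma \ref{lem:2.2}: the mean of the jump part $v-\pi_hv$ over $K$ is bounded pointwise by $Cp^{-1}h_K^{-1/2}\|\jump{v}\|_{L^2(\sigma(K))}$, hence by the penalty part of $\|v\|_{\rm DG}$ with room to spare; for the conforming part $\pi_hv\in H^1(\Om_i)$ one uses $\|\pi_hv\|_{H^1(\Om_i)}\le C\|v\|_{\rm DG}$ (from Lemma \ref{lem:2.2} and the discrete Poincar\'e inequality of Lemma \ref{lem:2.3}) together with the standard two-dimensional logarithmic bound on the maximum of a piecewise polynomial on a $K$-mesh, in which — thanks to Remark \ref{rem0} (only elements in $\{\cS(K)_2:K\in\cT^\Ga\}$ are refined) and assumption (H3) — the logarithmic factor only sees the total area $\lesssim h_{\min}^2M$ of the refined part of the mesh. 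Combining with the counts gives $\lambda_{\min}(\mathbb{A})^{-1}\le C(1+|\ln(h_{\min}^2M)|)(p^2(N-N^\Ga)+p^3N^\Ga)$, and multiplying by the bound on $\lambda_{\max}(\mathbb{A})$ completes the proof.

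The main obstacle is precisely the logarithmic estimate above. One must establish it for piecewise $Q_p$ functions on the nonconforming, locally refined Cartesian mesh with curved interface elements; show it is robust in $p$ (only cell means enter, so the $p$-conditioning of the spectral-element and orthogonal-polygon bases does not appear here); and, most delicately, verify that the logarithm depends on the \emph{area} $h_{\min}^2M$ of the refined zone rather than on $h_{\min}$ alone, which is what makes the bound sharp on graded meshes. Handling the interface elements (means over the two curved pieces $K_i$, using the strong shape regularity of large elements) and keeping the bookkeeping of the $N-N^\Ga$ versus $N^\Ga$ contributions consistent with the $\Theta$- and $p$-powers is the remaining routine but careful work.
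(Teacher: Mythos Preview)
Your treatment of $\lambda_{\max}(\mathbb A)$ is the same as the paper's: bound $a_h(v,v)\le C\|v\|_{\rm DG}^2$, expand elementwise, and on interface elements combine Lemma~\ref{lem:2.1} with \eqref{b5}--\eqref{b6} to get $a_h(v,v)\le C\Theta^2 p\,\|\bV\|_{\ell_2}^2$.

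For $\lambda_{\min}(\mathbb A)$ the paper takes a different and shorter route than yours. It first sets $w=(\pi_hv_1)\chi_{\Om_1}+(\pi_hv_2)\chi_{\Om_2}$, bounds $\|\bV-\bW\|_{\ell_2}^2$ by jump terms via \eqref{b7}, \eqref{b6} and Lemma~\ref{lem:2.2}, and then applies the Bank--Scott device directly to the weighted $L^2$ sum: by H\"older with exponent $r/2$ and the $2$D embedding $\|w\|_{L^r}\le C r^{1/2}\|w\|_{H^1}$ one obtains
\[
\sum_{K}h_K^{-2}\|w\|_{L^2(K)}^2\le C\,N^{(r-2)/r}h_{\min}^{-4/r}\|w\|_{L^r(\Om_1\cup\Om_2)}^2\le C\,N\,(h_{\min}^2 N)^{-2/r} r\,\|w\|_{H^1(\Om_1\cup\Om_2)}^2,
\]
separately over $\cM\setminus\cM^\Ga$ and $\cM^\Ga$; replacing the smaller of $N-N^\Ga$ and $N^\Ga$ by $M$ and optimizing $r=\max(2,|\ln(h_{\min}^2 M)|)$ gives the stated logarithm. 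Lemma~\ref{lem:2.3} is then used once, to bound $\|w\|_{H^1(\Om_1\cup\Om_2)}^2\le Ca_h(v,v)$.

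Your route via local Poincar\'e and a discrete $L^\infty$/log--Sobolev bound on cell means is a legitimate alternative, but the key sentence --- ``the logarithmic factor only sees the total area $\lesssim h_{\min}^2M$ of the refined part of the mesh'' --- is not the correct mechanism. The telescoping/BMO argument for an $H^1$ function gives $|\bar w_K|^2\le C(1+|\ln h_K|)\|w\|_{H^1}^2$ regardless of how the mesh is graded elsewhere, so the natural output of your argument is $1+|\ln h_{\min}|$, not $1+|\ln(h_{\min}^2M)|$. You can still recover the theorem as stated, but only by invoking the extra (easy) fact that for a $C^2$ curve $M\le N^\Ga\le C|\Ga|h_{\min}^{-1}$, hence $h_{\min}^2M\le Ch_{\min}$ and $1+|\ln h_{\min}|\le C(1+|\ln(h_{\min}^2M)|)$. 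The paper's Bank--Scott argument does not need this: the factor $(h_{\min}^2M)^{-2/r}$ emerges automatically from H\"older and the count, with no geometric input about the interface. In short, your outline can be made to work, but the paper's approach is cleaner and produces the claimed logarithm without the detour through a pointwise discrete Sobolev inequality.
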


We note that $N-N^\Ga$ is the number of non-interface elements. For elliptic equations, it is well-known that the condition number of the stiffness matrix of standard finite element methods grows linearly in terms of the number of elements (see, e.g., Bank and Scott \cite{Bank}). The condition number of the stiffness matrix of the $hp$ finite element method using Gauss-Lobatto shape functions is studied in \cite{Melenk}, which in particular generalizes earlier results that the condition number grows as $O(p^3)$ of the spectral method. Thus the estimate in Theorem \ref{thm:4.1} is optimal in terms of the number of elements and $p$. Our numerical results in Example 1 of section 5 show that the bound is also sharp in terms of the growth factor $\Theta^2$.

\begin{proof} For any $v=v_1\chi_{\Om_1}+v_2\chi_{\Om_2}\in\X_p(\cM)$, denote $w=(\pi_h v_1)\chi_{\Om_1}+(\pi_h v_2)\chi_{\Om_2}$ and $\bW=\Phi(w)$ the coefficient vector corresponding to $w$. By \eqref{b7}, \eqref{b6} and Lemma \ref{lem:2.1} we know that
\ben
\|\bV-\bW\|_{\ell_2}^2&\le&Cp^2\sum_{K\in\cM\backslash\cM^\Ga}(\|\na(v-w)\|_{L^2(K)}^2+{\rev{h_K^{-2}}}\|v-w\|_{L^2(K)}^2)\\
& &+\,C\sum_{K\in\cM^\Ga}p^3h_K^{-2}\|v-w\|_{L^2(K)}^2\\
&\le&C p^2\|ph^{-1/2}\lj v\rj\|_{\cE^{\rm side}_1\cup\cE^{\rm side}_2}^2.
\een
Thus by the triangle inequality
\begin{align}
\|\bV\|_{\ell_2}^2
\le C p^2\|ph^{-1/2}\lj v \rj \|_{\cE^{\rm side}_1\cup\cE^{\rm side}_2}^2+2\|\bW\|_{\ell_2}^2.\label{d1}
\end{align}
Again by \eqref{b7}, \eqref{b6} we have
\ben
{\rev{\|\bW\|_{\ell_2}^2\le Cp^2\sum_{K\in\cM\backslash\cM^\Ga} \big(\|\na w\|_{L^2(K)}^2+h_K^{-2}\|w\|_{L^2(K)}^2\big)+C\sum_{K\in\cM^\Ga}p^3h_K^{-2}\|w\|_{L^2(K)}^2.}}
\een
Now we use an argument in \cite{Bank}. By H\"older inequality, for any $r\ge 2$,
\ben
\sum_{K\in\cM\backslash\cM^\Ga}h_K^{-2}\|w\|_{L^2(K)}^2
&\le&C\sum_{K\in\cM\backslash\cM^\Ga}h_K^{-4/r}\|w\|_{L^r(K)}^2\\
&\le&{\rev{C\left(\sum_{K\in\cM\backslash\cM^\Ga} h_K^{-4/(r-2)}\right)^{\frac{r-2}r}\|w\|_{L^r(\Om_1\cup\Om_2)}^2}}\\
&\le&{\rev{Ch_{\min}^{-4/r}(N-N^\Ga)^{\frac{r-2}r}\|w\|_{L^r(\Om_1\cup\Om_2)}^2}}.
\een
Similarly,
\ben
{\rev{\sum_{K\in\cM^\Ga}p^3h_K^{-2}\|w\|_{L^2(K)}^2\le Cp^3h_{\rm min}^{-4/r}(N^\Ga)^{\frac{r-2}r}\|w\|_{L^r(\Om_1\cup\Om_2)}^2}}.
\een
Therefore,
\ben
\|\bW\|_{\ell_2}^2&\le&{\rev{Cp^2\|\na w\|_{\cM\backslash\cM^\Ga}^2+C(p^2(N-N^\Ga)+ p^3N^\Ga)h_{\rm min}^{-4/r}M^{-2/r}\|w\|_{L^r(\Om_1\cup\Om_2)}^2}}\\
&\le&{\rev{C(p^2(N-N^\Ga)+ p^3N^\Ga)(h^2_{\rm min}M)^{-2/r}r\|w\|_{H^1(\Om_1\cup\Om_2)}^2}},
\een
where we have used the embedding inequality, $\|w\|_{L^r(D)}\le Cr^{1/2}\|w\|_{H^1(D)}$ for any $w\in H^1(D)$, $r\ge 1$, on any Lipschitz domain $D$. Notice that for any $\zeta>0$, $\zeta^{-2/r}=e^{-2\ln\zeta/r}=e^{-2}$ if $r=\ln\zeta$, by taking $r=\max(2,|\ln(h_{\rm min}^2M)|)$ we obtain
\beq\label{d2}
\|\bW\|_{\ell_2}^2\le {\rev{C(p^2(N-N^\Ga)+ p^3N^\Ga)(1+|\ln(h_{\rm min}^2M)|)\|w\|_{H^1(\Om_1\cup\Om_2)}^2}}.
\eeq
By Lemma \ref{lem:2.1} and the discrete Poincar\'e inequality in Lemma \ref{lem:2.3}
\ben
\|w\|_{H^1(\Om_1\cup\Om_2)}^2&\le& {\rev{2\|w-v\|_{H^1(\Om_1\cup\Om_2)}^2+2(\|\nabla_h v\|_{L^2(\Om)}^2+\|v\|_{L^2(\Omega)}^2)}}\\
&\le&{\rev{C(\|ph^{-1/2}\lj v\rj \|_{\cE_1^{\rm side}\cup\cE_2^{\rm side}}^2+\|\nabla_h v\|_{L^2(\Om)}^2+\|v\|_{L^2(\Omega)}^2)}}\\
&\le& {\rev{Ca_h(v,v).}}
\een
This yields by \eqref{d1}-\eqref{d2} that
\beq\label{d3}
\|\bV\|_{\ell_2}^2\le C (p^2(N-N^\Ga)+p^3N^\Ga)(1+|\ln(h_{\rm min}^2M)|)a_h(v,v).
\eeq
On the other hand, since $a_h(v,v)\le C\|v\|_{\rm DG}^2$, we have
\be
a_h(v,v)&\le&C\sum_{K\in\cM\backslash\cM^\Ga}\left(\|\na v\|_{L^2(K)}^2+\Theta_K\|ph^{-1/2}v\|_{L^2(\pa K)}^2\right)\nn\\
& &+\sum_{K\in\cM^\Ga}\sum^2_{i=1}\left(\|\na v_i\|_{L^2(K_i)}^2+\Theta_K\|ph^{-1/2}v_i\|_{L^2(\pa K_i)}^2+\|p^{-1}h^{1/2}\na_T v_i\|_{L^2(\Ga_K)}^2\right)\nn\\
&:=&{\rm I}+{\rm II}.\label{d4}
\ee
By \eqref{b7}-\eqref{b8}
\beq\label{d5}
{\rm I}\le C\Theta\sum_{K\in\cM\backslash\cM^\Ga}\left(\|\na v\|_{L^2(K)}^2+\|ph^{-1/2}v\|_{L^2(\pa K)}^2\right)\le C\Theta p\sum_{K\in\cM\backslash\cM^\Ga}\|\bV_K\|_{\ell_2}^2.
\eeq
For $K\in\cM^\Ga$, by Lemma \ref{lem:2.1} and \eqref{b6}, for $i=1,2$,
\ben
\|\na v_i\|_{L^2(K_i)}^2\le C\Theta_Kp^4h_K^{-2}\|v_i\|_{L^2(K_i)}^2\le C\Theta_K^2p\|\bV_K\|_{\ell_2}^2.
\een
By \eqref{a10}, Lemma \ref{lem:2.1}, the $hp$ trace inequality, and inverse estimate
\ben
\|v_i\|_{L^2(\pa K_i)}^2&\le&C\|v_i\|_{L^2(K_i)}\|\na v_i\|_{L^2(K_i)}+C\|v_i\|_{L^2(\pa K_i^h)}^2\\
&\le&C\Theta_{K}\|v_i\|_{L^2(K_i^{h-\delta_K})}\|\na v_i\|_{L^2(K_i^{h-\delta_K})}+Cp^2h_K^{-1}\|v_i\|_{L^2(K_i^h)}^2\\
&\le&C\Theta_{K} p^2h_K^{-1}\|v_i\|_{L^2(K_i^{h-\delta_K})}^2,
\een
where we used the fact $\|v_i\|_{L^2(K_i^h)}^2\le C\Theta_{K} \|v_i\|_{L^2(K_i^{h-\delta_K})}^2$, which follows directly from Lemma \ref{lem:2.0}, in the last inequality. Thus by \eqref{b5}
\ben
\Theta_K\|ph^{-1/2}v_i\|_{L^2(\pa K_i)}^2\le C\Theta_K^2p^4h_K^{-2}\|v_i\|_{L^2(K_i^{h-\delta_K})}^2\le C\Theta_K^2 p\|\bV_K\|_{\ell_2}^2.
\een
Similarly, one can prove $\|p^{-1}h^{1/2}\na_T v_i\|_{L^2(\Ga_K)}^2\le C\Theta_K^2 p\|\bV_K\|_{\ell_2}^2$. Therefore, we have
\beq\label{d6}
{\rm II}\le C\Theta^2 p\sum_{K\in\cM^\Ga}\|\bV_K\|_{\ell_2}^2.
\eeq
Combining \eqref{d4}-\eqref{d6} we obtain
\ben
a_h(v,v)\le C\Theta^2 p\|\bV\|_{\ell_2}^2.
\een
This completes the proof by using \eqref{d3}.
\end{proof}

To conclude this section, we remark that since $\Theta_K=\mathsf{T}(\frac{1+3\eta_K}{1-\eta_K})^{4p+3}$, Theorem \ref{thm:4.1} indicates that to control the condition number of the stiffness matrix, one should choose $\eta_K\ll 1$, that is, one should have the interface being well resolved by the mesh.

\section{Numerical examples}\label{sec_numeric}
In this section we provide some numerical examples to verify our theoretical results. In order to construct the orthogonal polynomials on the polygons $\hat{K}_i^{h-\delta_K}$ for the interface elements $K$, we adopt the Gram-Schmidt process starting from the basis functions of $Q_p(\hat{K})$ which are the Lagrange interpolation polynomials through the Gauss-Lobatto integration points on $\hat{K}$. The details can be found in Sommariva and Vianello \cite{Sommariva2017MCS}. The algorithms are implemented in MATLAB on a workstation with Intel(R) Core(TM) i9-10885H CPU 2.40GHz and 64GB memory.

\begin{exmp}\label{example_cond}
In this example we show that the growth factor $\Theta^2$ in the bound of the condition number of the stiffness matrix in Theorem \ref{thm:4.1} is sharp. For this purpose, we consider the case of one interface element.
Let $K=(-2,2)^2$ and the interface $\Gamma=\{(x(t),y(t))\in \mathbb{R}^2:t\in(-\frac{\sqrt{2}}{2},\frac{\sqrt{2}}{2})\}$, where $x(t)$ and $y(t)$ are defined as follows:
\begin{align*}
&x(t)=\sqrt{2}\cos(\alpha+\frac{\pi}{4})t+\sqrt{2}\sin(\alpha+\frac{\pi}{4})(100t^3-\beta t)-1,\\
&y(t)=-\sqrt{2}\sin(\alpha+\frac{\pi}{4})t+\sqrt{2}\cos(\alpha+\frac{\pi}{4})(100t^3-\beta t)-1,
\end{align*}
where $\cos(\alpha)=\frac{1}{\sqrt{\mu^2+1}}$, $\sin(\alpha)=\frac{\mu}{\mu^2+1}$, $\beta=\frac{100}{\sqrt{\mu^2+1}}-\mu$ and $\mu=3.8$.
\end{exmp}

The domain and the interface are shown in Fig.\ref{fig_one_element} (left) in which $K_1^{h-\delta_K}=\Delta AE'F'$ and $K_2^{h-\delta_K}$ is the polygon with vertices $F'',E'',B,C,D$. The interface deviation is $\eta_K=\frac{200}{3\sqrt{3}(\mu^2+1)^2}\approx 0.16$. We first consider the condition number of the mass matrix to verify our analysis in Lemma \ref{lem:new}. For $v\in\X_p(K)$, in the notation of \eqref{yy1}, the mass matrix $\mathbb{M}\in\R^{(p+1)^2\times(p+1)^2}$ is defined as $(\mathbb{M}\bV_K,\bW_K)_{\ell_2}=(v,w)_K\ \ \forall v,w\in\X_p(K)$. Then \eqref{b6} implies that the condition number $\kappa(\mathbb{M})\le C\Theta$ for some constant $C$ independent of $p$ and $\eta_K$. We plot $\kappa(\mathbb{M})$ vs. $\Theta$ via different degrees of polynomials with loglog scaling in Fig.\ref{fig_one_element} (right). It is clear that the condition number of $\mathbb{M}$ grows as $\Theta$ which agrees with our theoretical bound.

We plot the curve $\kappa(\mathbb{A})$ vs. $\Theta^2 p^4$ via different degrees of polynomials with loglog scaling in
Fig.\ref{fig_one_element_small_eta} (left).  We observe that the condition number grows as $\Theta^2p^4$ which confirms our analysis in Theorem \ref{thm:4.1}. We also observe that the $\kappa(\mathbb{A})$ increases very fast with the increase of polynomial degree. One can reduce the interface deviation to reduce the $\kappa(\mathbb{A})$. We change $\mu$ to reduce $\eta_K$ such that $\eta_K\le \frac{0.1}{p(p+1)}$ and plot the curve $p^4$ vs. $\kappa(\mathbb{A})$ in Fig.\ref{fig_one_element_small_eta} (right). We can find the $\kappa(\mathbb{A})$ is significantly reduced and the $\kappa(\mathbb{A})$ has $p^4$ increasing rates.

\begin{figure}[!ht]
\centering
\includegraphics[width=0.35\textwidth]{./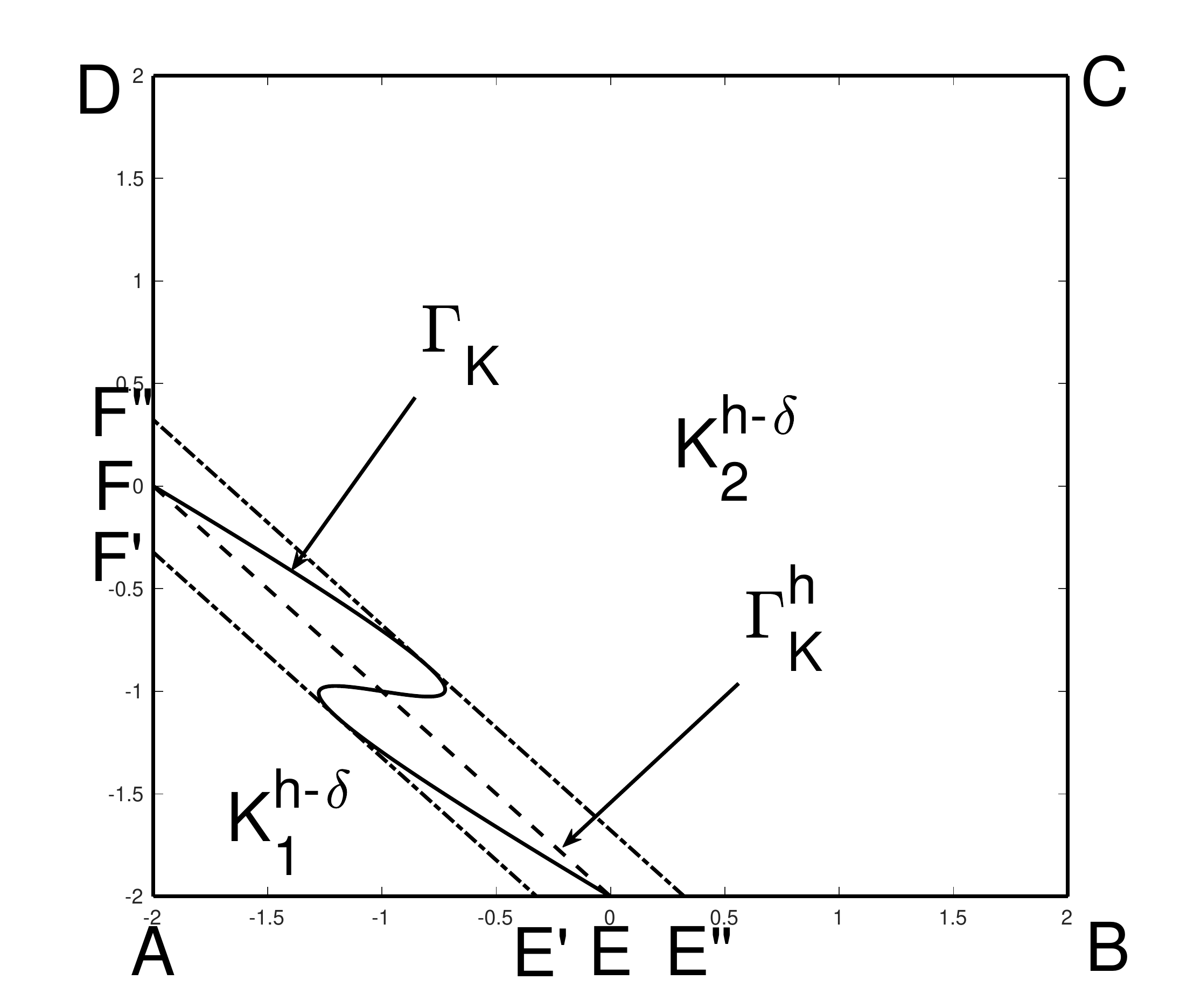}
\includegraphics[width=0.4\textwidth]{./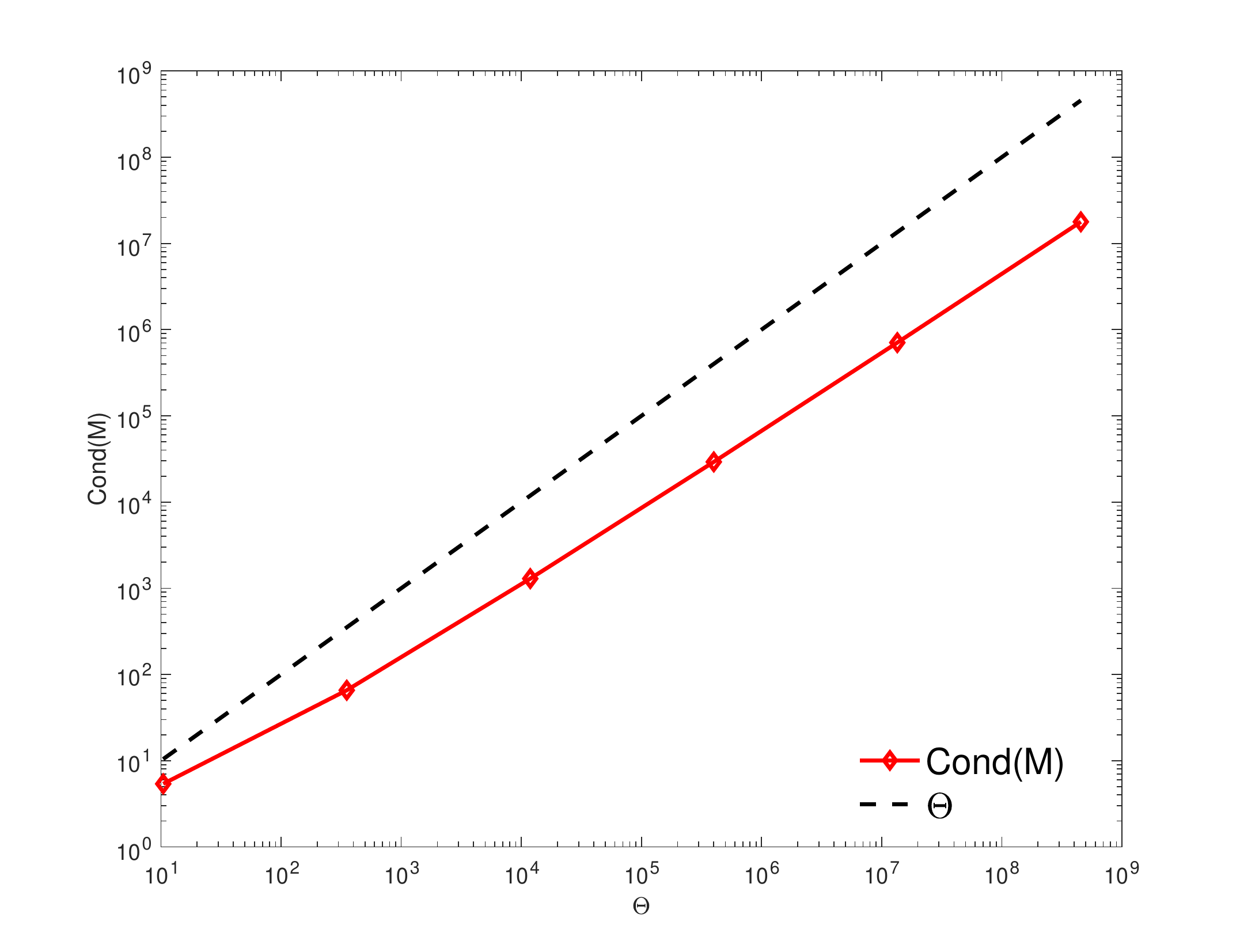}
\caption{Example \ref{example_cond}: The geometry setting of Example \ref{example_cond} (left) and the growth rate of the condition number of the mass matrix (right).} \label{fig_one_element}
\end{figure}

\begin{figure}[!ht]
\centering
\includegraphics[width=0.4\textwidth]{./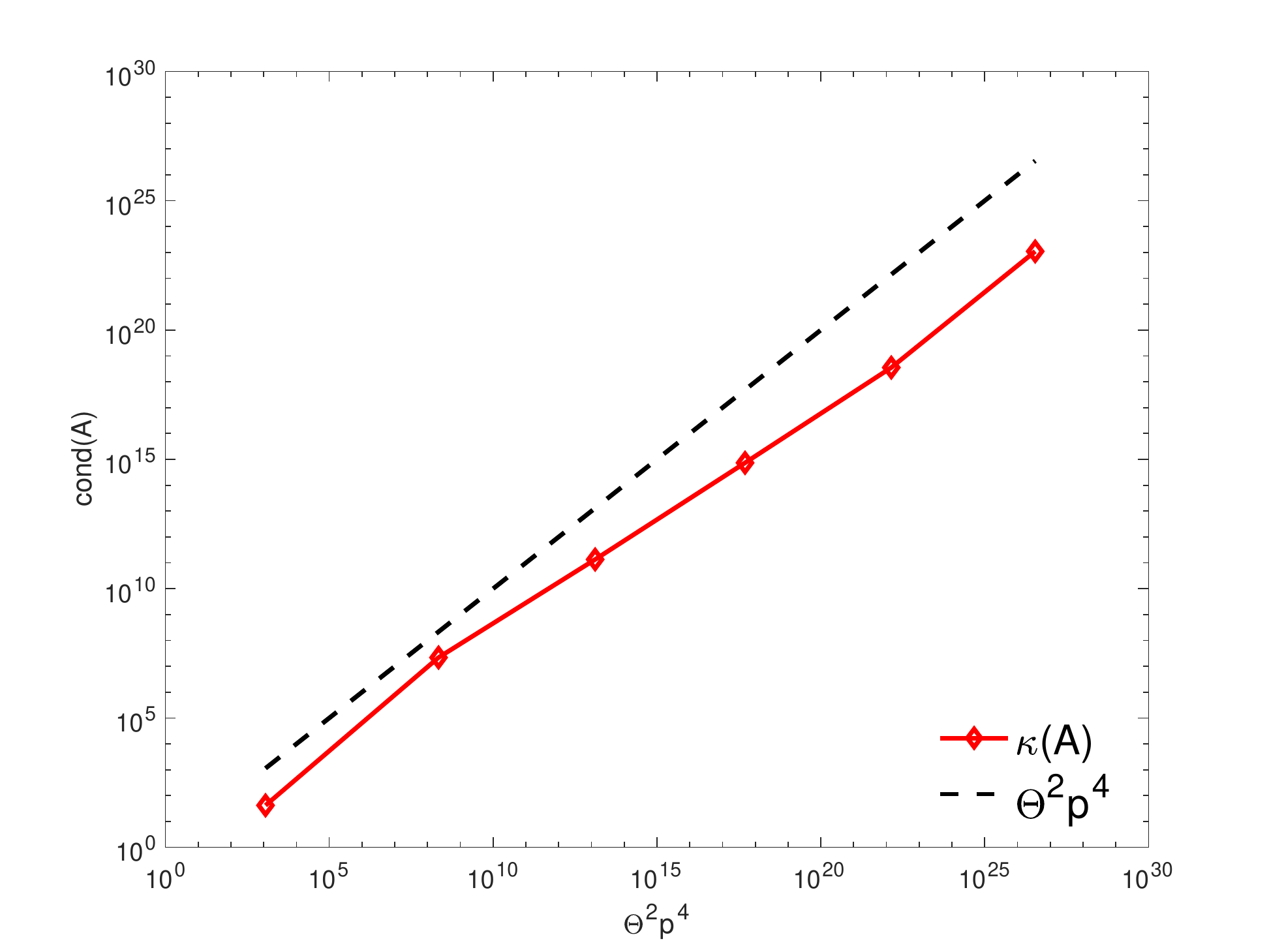}
\includegraphics[width=0.4\textwidth]{./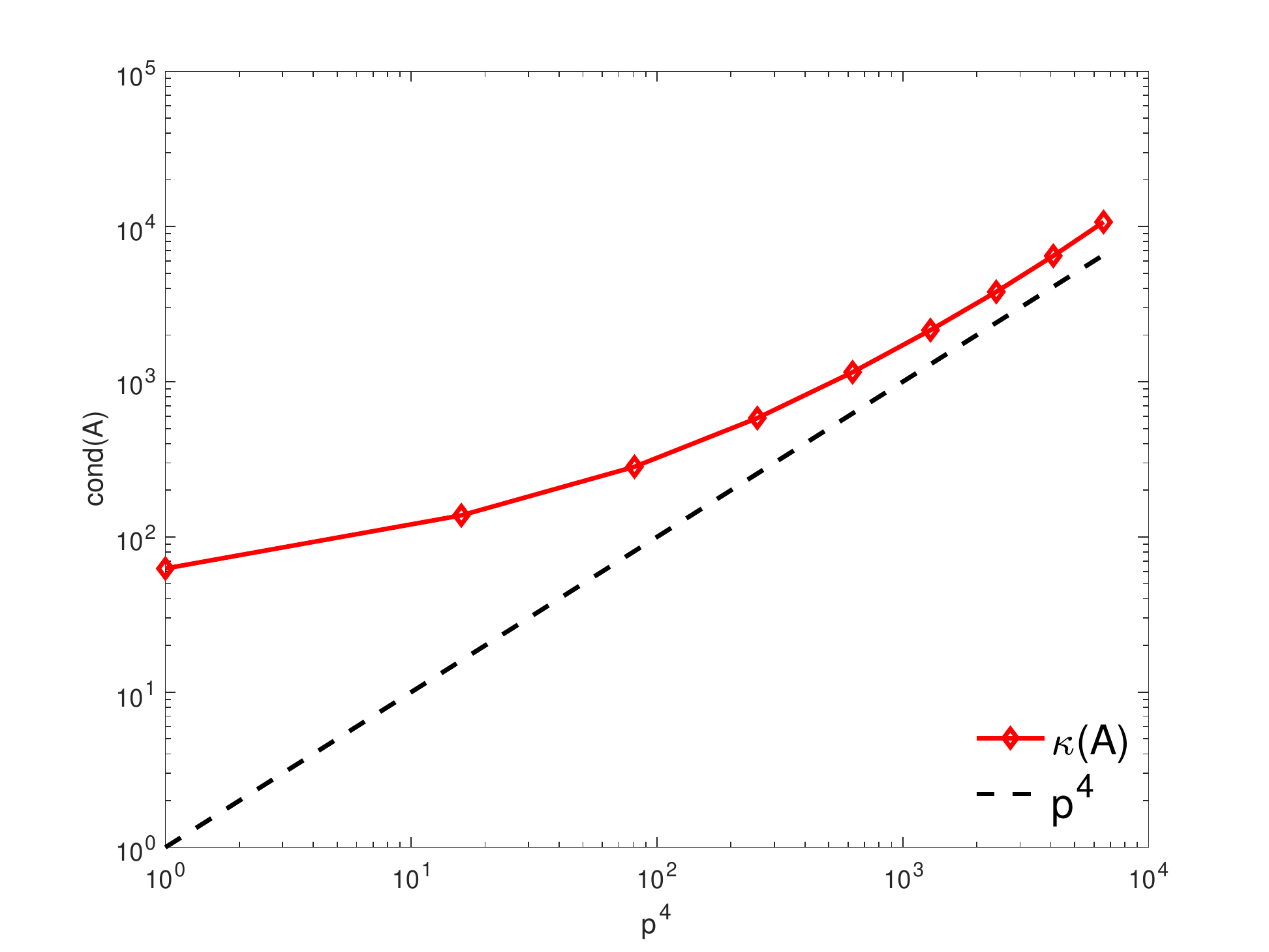}
\caption{Example \ref{example_cond}: The growth rate of the condition number of $\mathbb{A}$ with $\eta_K=0.16$ (left) and the condition number of $\mathbb{A}$ with $\eta_K\le \frac{0.1}{p(p+1)}$ (right).} \label{fig_one_element_small_eta}
\end{figure}

This example shows clearly the importance of reducing the interface deviation to control the condition number of the stiffness matrix.
In the following we always require
\beq\label{yy}
\max_{K\in\cM}\eta_K\le\frac{0.1}{p(p+1)},
\eeq
which is stronger than that in Assumption (H2). The finite element meshes in our following numerical examples are constructed as follows.

\noindent\rule{\textwidth}{0.35mm}
\noindent{\bf{Algorithm 7:}} {The algorithm for generating the induced mesh satisfying Assumption (H3) and \eqref{yy}}

\vspace{-0.2cm}
\noindent\rule{\textwidth}{0.35mm}

{\bf{Input:}} A uniform initial Cartesian mesh $\cT_0$ of mesh size $h$

{\bf{Output:}} The induced mesh $\cM={\rm Induced}(\mathfrak{C})$

$1^\circ$ Set $\cT=\cT_0$;

$2^\circ$ Refine the elements of $\cT$ near the interface by quad refinements to generate a Cartesian mesh (still denoted by) $\cT$ with possible handing nodes such that all interface elements of $\cT$ form an admissible chain $\mathfrak{C}$;

$3^\circ$ Call the refinement procedure in \cite[\S 6.3]{Bonito} such that $\cT$ satisfies Assumption (H3);

$4^\circ$ Use Algorithm 6 to generate an induced mesh $\cM={\rm Induced}(\mathfrak{C})$;

$5^\circ$ If the interface elements in $\cM$ do not satisfy \eqref{yy},  release all merged elements in $\mathfrak{C}$, go to $2^\circ$.

\vspace{-0.2cm}
\noindent\rule{\textwidth}{0.35mm}

We remark that after step $2^\circ$ in Algorithm 7, the interface elements are of the same size which is smaller than the sizes
of non-interface elements. Thus when implementing the refinement procedure in \cite[\S 6.3]{Bonito} in our situation, only non-interface elements
are refined and consequently, the interface elements still form an admissible chain.

%
%

\begin{exmp}\label{example1}
Let the interface $\Gamma$ be the circle centered at $(0,0)^T$ with radius $r_0=1.1$. We set $\Omega=(-2,2)^2$, $\Omega_1=\{(x,y)\in \mathbb{R}^2:\sqrt{x^2+y^2}<r_0\}$ and $\Omega_2=\Omega\setminus\bar{\Omega}_1$. Set $a_1=10$ and $a_2=1$. The right-hand side $f$ and boundary condition $g$ are computed such that the exact solution is
\begin{align*}
u(x,y)=\left\{\begin{array}{ll}
e^{x^2+y^2-r_0^2}+10r_0^2-1+(x^2+y^2-r_0^2)^2\sin(2\pi x)\sin(2\pi y) & \text{in } \Omega_1,\\
10(x^2+y^2)+(x^2+y^2-r_0^2)^2\sin(2\pi x)\sin(2\pi y) &\text{in }\Omega_2.
\end{array}\right.
\end{align*}
\end{exmp}

\begin{table}[!ht]\centering
\caption{Example \ref{example1}: numerical errors $\|u-U\|_{DG}$ and orders for $p=1,2,3,4,5$.}\label{tab1}
\resizebox{\textwidth}{!}{
\begin{tabular}{|c|cc|cc|cc|cc|cc|}
  \hline
 &\multicolumn{2}{|c|}{$p=1$}&\multicolumn{2}{|c|}{$p=2$}&\multicolumn{2}{|c|}{$p=3$}&\multicolumn{2}{|c|}{$p=4$}&\multicolumn{2}{|c|}{$p=5$}\\\hline
  $h$  &  error & order & error & order & error & order & error & order & error & order\\ \hline
 $1/4$    & 1.13E+00 & --   & 4.00E-01    & --   & 1.20E-01 &  --  &   3.21E-02   & -- &   2.09E-03   & --   \\
 $1/8$   & 6.72E-01 & 0.75 & 1.08E-01    &1.89  	& 2.01E-02 & 2.58 & 1.55E-03   & 4.37 &   1.62E-04   & 3.69 \\
 $1/16$   &  3.57E-01& 0.91 &2.89E-02     & 1.90 	& 2.49E-03 & 3.01 & 1.03E-04   &  3.91 &   5.18E-06   & 4.97\\
 $1/32$   & 1.79E-01 & 0.99     & 7.32E-03    &1.98  	& 3.12E-04 & 3.00 & 6.56E-06   &  3.98&   1.62E-07   & 5.00\\\hline
\end{tabular}
}
\end{table}
In Table \ref{tab1}, we show the errors $\|u-U\|_{DG}$ and the corresponding convergence orders for $p=1,2,3,4,5$.
We clearly observe the optimal $p$-th order convergence and the superior performance of high order methods. Fig. \ref{fig_mesh_one_circle} shows the induced mesh when $h=1/4$ and the corresponding numerical solution.

\begin{figure}[!ht]
\centering
\includegraphics[width=0.3\textwidth]{./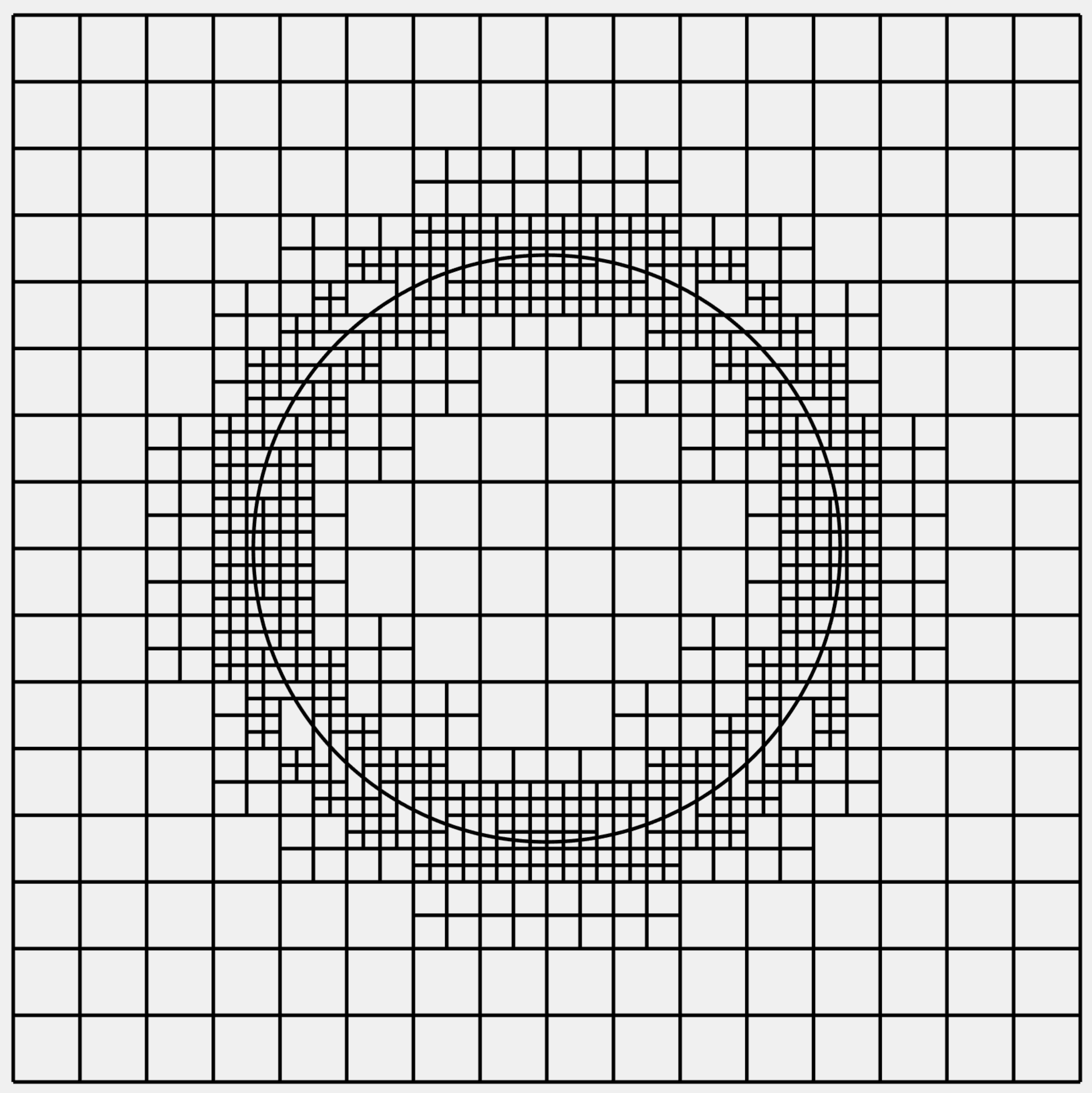}
\includegraphics[width=0.45\textwidth]{./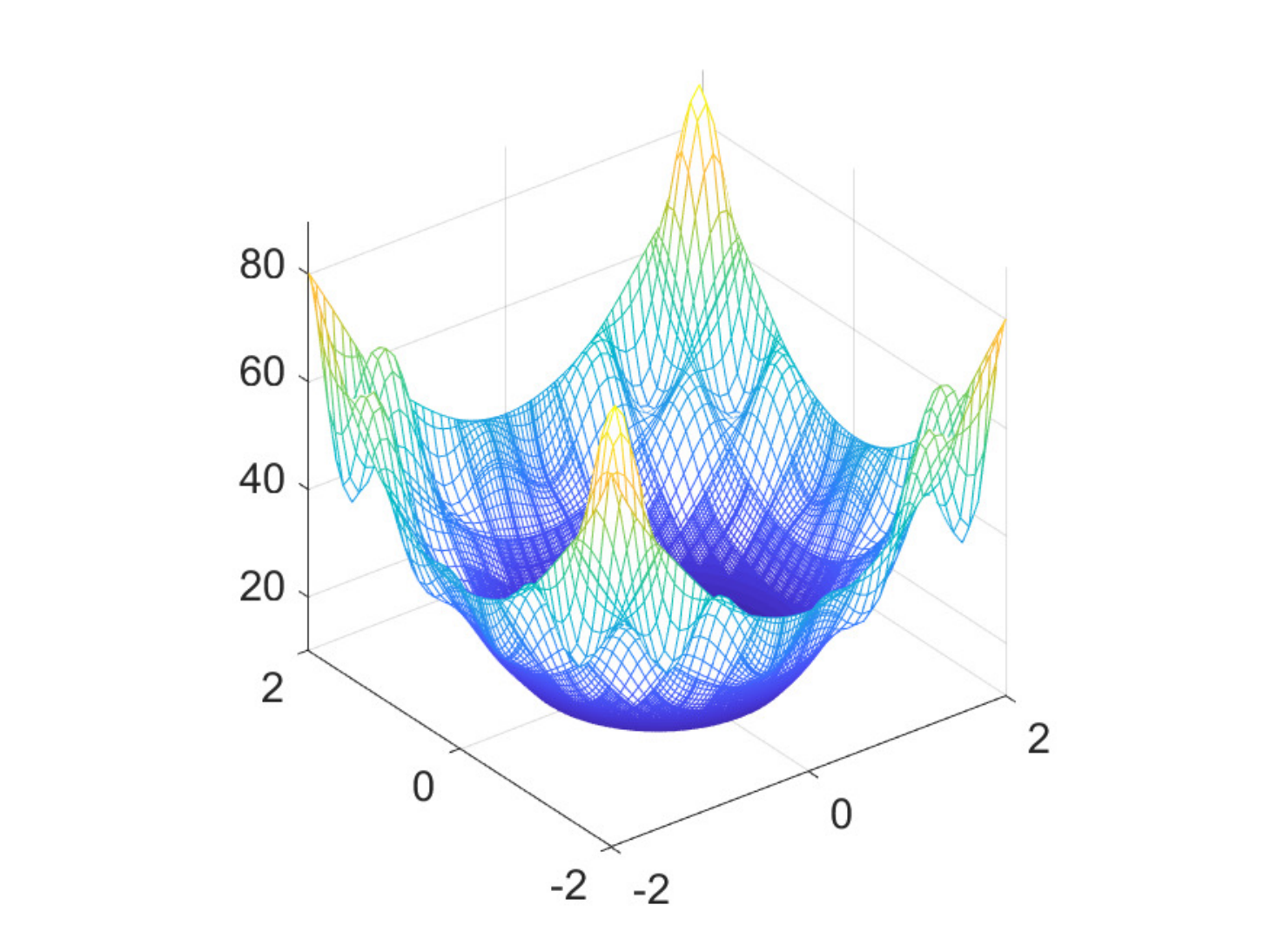}
\caption{Example \ref{example1}: The induced mesh of  $940$ elements when $h=1/4$ (left) and the corresponding numerical solution (right). }\label{fig_mesh_one_circle}
\end{figure}

\begin{exmp}\label{example2}
In this example we consider geometrically more complex interface. Let the interface $\Gamma$ be defined as follows:
\begin{align*}
\Gamma=\{(x,y)\in \mathbb{R}^2:r=\frac{2}{9}(3+4^{\sin(5\theta)})\},
\end{align*}
where $(r,\theta)$ are the polar coordinates. The domain $\Omega$ is divided to $\Omega_1$ and $\Omega_2$ by $\Gamma$, that is,
\begin{align*}
&\Omega_1=\{(x,y)\in (-2,2)^2: r<\frac{2}{9}(3+4^{\sin(5\theta)})\},\\
&\Omega_2=\{(x,y)\in (-2,2)^2: r>\frac{2}{9}(3+4^{\sin(5\theta)})\}.
\end{align*}
We set $a_1=10$, $a_2=1$, the right-hand side $f=1$, and the boundary condition $g=0$.
\end{exmp}

The exact solution of this example is unknown. We use the a posteriori error estimate in \cite{CLX} to measure the accuracy of computation. In Table \ref{tab2}, we observe the optimal $p$-th order convergence. The induced mesh when $h=1/4$ is shown in Fig. \ref{fig_mesh_exmp_five_stars} which has $2654$ elements. The discrete solution is depicted in Fig. \ref{fig_discrete_solution}.

\begin{table}[!ht]\centering
\caption{Example \ref{example2}: A posterior error estimates and the convergence orders for $p=1,2,3,4,5$.}\label{tab2}
\resizebox{\textwidth}{!}{
\begin{tabular}{|c|cc|cc|cc|cc|cc|}
  \hline
 &\multicolumn{2}{|c|}{$p=1$}&\multicolumn{2}{|c|}{$p=2$}&\multicolumn{2}{|c|}{$p=3$}&\multicolumn{2}{|c|}{$p=4$}&\multicolumn{2}{|c|}{$p=5$}\\\hline
  $h$  &  error & order & error & order & error & order & error & order & error & order\\ \hline
 $1/4$    & 1.38E+00 & --   & 1.08E-01   & --   & 3.96E-02 &  --  & 2.85E-03     & -- &1.02E-04     & --   \\
 $1/8$   & 7.31E-01 & 0.92&  2.93E-02  &1.88 	&5.13E-03  &2.95  & 1.83E-04  &3.96 & 3.35E-06    &4.93 \\
 $1/16$   &  3.79E-01& 0.95 & 8.13E-03   &1.85 	&6.46E-04  &2.99  &1.15E-05   &3.99   &1.08E-07     &4.95\\
 $1/32$   & 1.90E-01 & 0.99     &2.09E-03    &1.96 	& 8.12E-05 &2.99 &7.25E-07 &3.99  & 3.41E-09    &4.99 \\
 \hline
\end{tabular}
}
\end{table}

\begin{figure}[!ht]
\centering
\begin{minipage}[c]{0.35\textwidth}
\includegraphics[width=0.9\textwidth, height = 0.9\textwidth]{./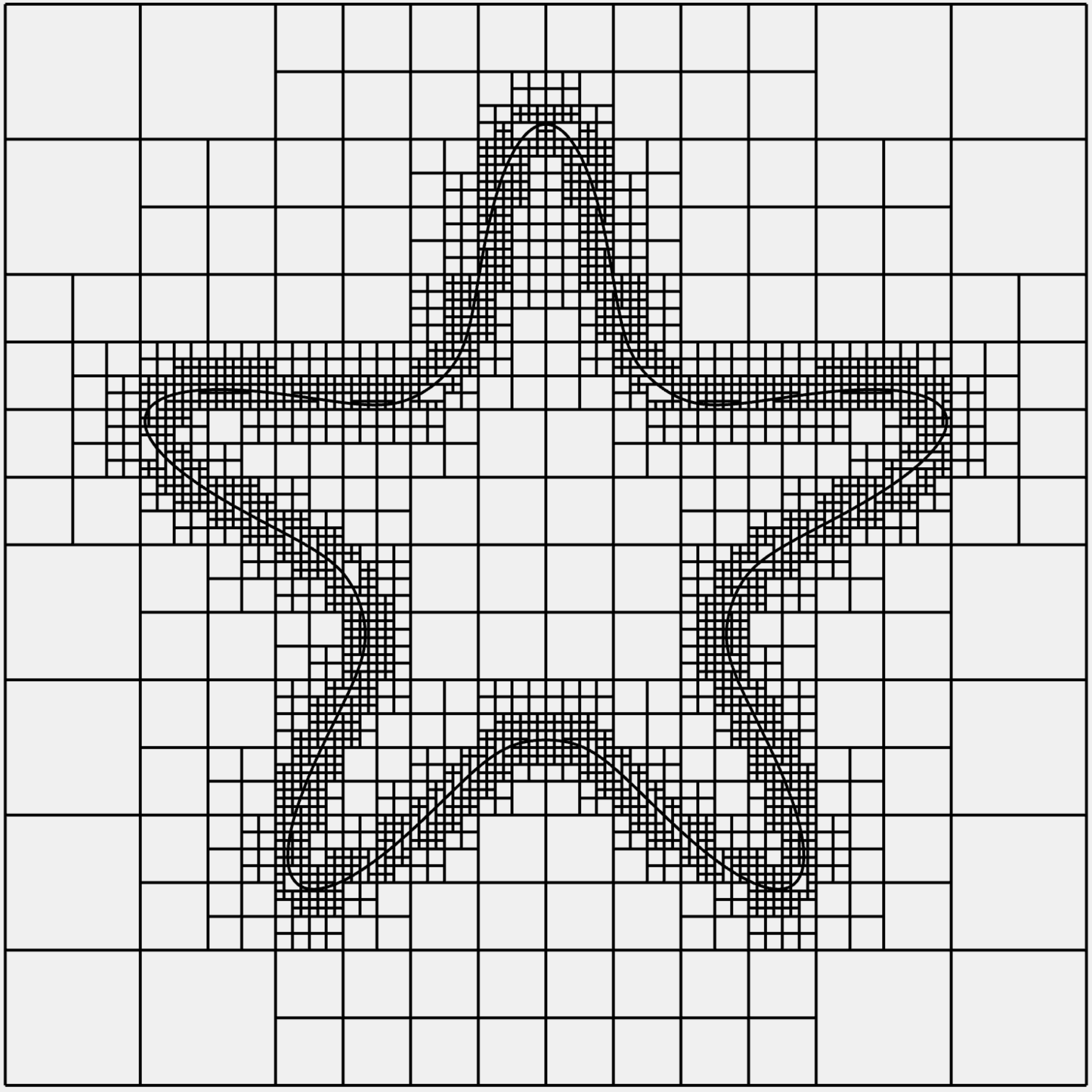}
\end{minipage}
\begin{minipage}[c]{0.35\textwidth}
\includegraphics[width=0.9\textwidth, height = 0.9\textwidth]{./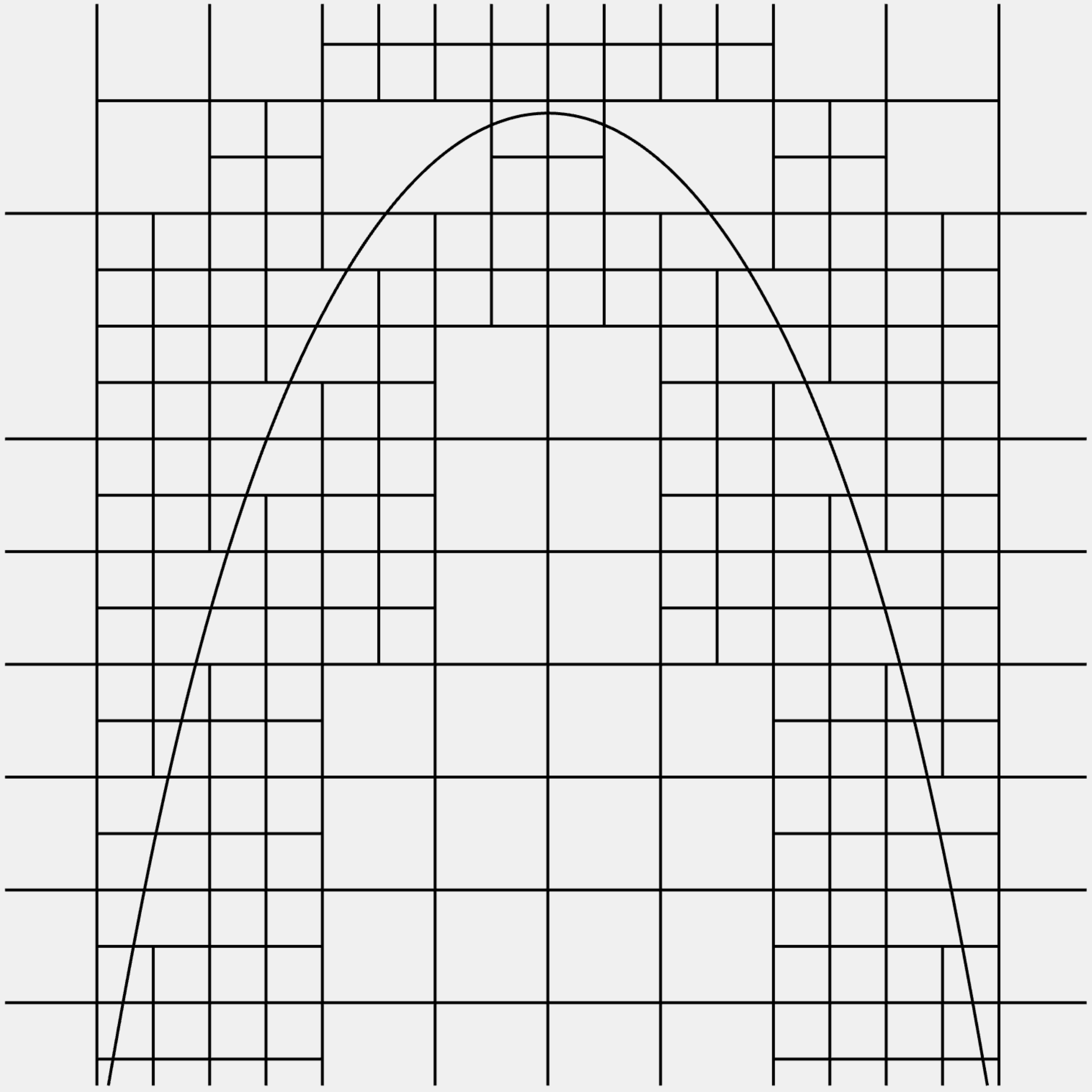}
\end{minipage}
\caption{Example \ref{example2}: The induced mesh of $2654$ elements when $h=1/4$ (left) and the corresponding zoomed local mesh (right). } \label{fig_mesh_exmp_five_stars}
\end{figure}
\begin{figure}[!ht]
\centering
\includegraphics[width=0.5\textwidth]{./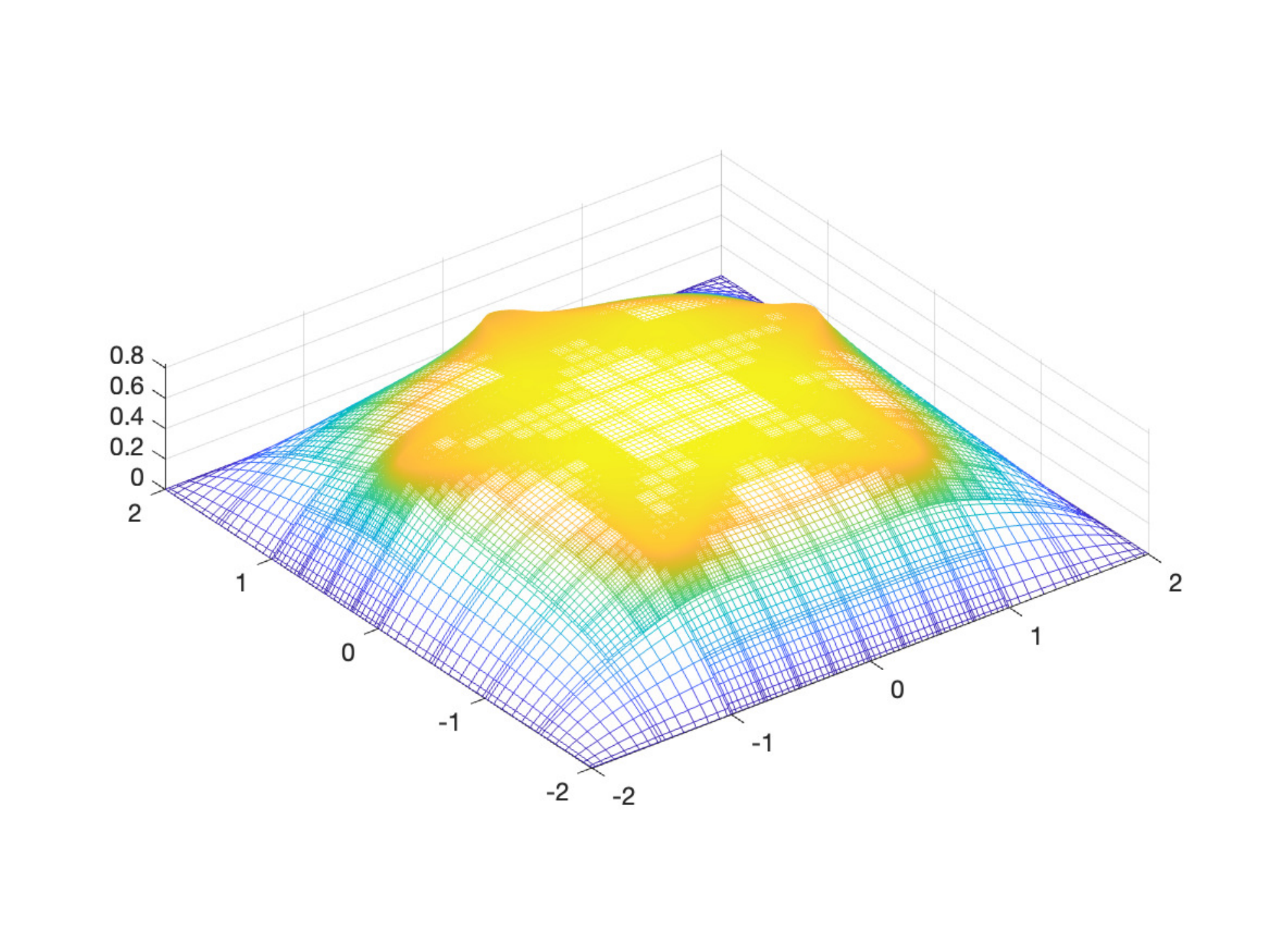}
\caption{Example \ref{example2}: The discrete solution on the mesh of $2654$ elements. } \label{fig_discrete_solution}
\end{figure}

%
%
\bigskip{\noindent \bf{Acknowledgement}}

The authors are grateful to Haijun Wu in Nanjing University for inspiring discussions.

\end{document}